\DeclareSymbolFontAlphabet{\mathbbl}{bbold}
\DeclareSymbolFontAlphabet{\mathbb}{AMSb}%
 \newtheorem{thm}{Theorem}[section]
 \newtheorem{cor}[thm]{Corollary}
 \newtheorem{lem}[thm]{Lemma}
 \newtheorem{prop}[thm]{Proposition}
\theoremstyle{definition}
 \theoremstyle{remark}
 \newtheorem{rem}[thm]{Remark}
\newcommand{\supp}{\mathop{\mathrm{supp}}}
\newcommand{\esssup}{\mathop{\mathrm{ess\, sup \;}}}
\newcommand{\essinf}{\mathop{\mathrm{ess\, inf \;}}}
\newcommand{\LpQ}{L^{p(\cdot)}_{\mathcal Q}(\mathbb R^n)}
\newcommand{\HIp}{H_I^{p(\cdot)}(\mathbb R^n)}
\newcommand{\HLp}{H_L^{p(\cdot)}(\mathbb R^n)}
\newcommand{\hLp}{h_L^{p(\cdot)}(\mathbb R^n)}
\newcommand{\HLIp}{H_{L+I}^{p(\cdot)}(\mathbb R^n)}
\numberwithin{equation}{section}
\begin{document}

\title[]
 {Local Hardy spaces with variable exponents associated to non-negative self-adjoint operators satisfying Gaussian estimates}

\author[V. Almeida]{V\'ictor Almeida}
\address{V\'ictor Almeida, Jorge J. Betancor, Lourdes Rodr\'iguez-Mesa\newline
	Departamento de An\'alisis Matem\'atico, Universidad de La Laguna,\newline
	Campus de Anchieta, Avda. Astrof\'isico S\'anchez, s/n,\newline
	38721 La Laguna (Sta. Cruz de Tenerife), Spain}
\email{valmeida@ull.es, jbetanco@ull.es, lrguez@ull.es}

\author[J. J. Betancor]{Jorge J. Betancor}

\author[E. Dalmasso]{Estefan\'ia Dalmasso}
\address{Estefan\'ia Dalmasso\newline
	Instituto de Matem\'atica Aplicada del Litoral, UNL, CONICET, FIQ.\newline Colectora Ruta Nac. N° 168, Paraje El Pozo,\newline S3007ABA, Santa Fe, Argentina}
\email{edalmasso@santafe-conicet.gov.ar}

\author[L. Rodr\'iguez-Mesa]{Lourdes Rodr\'iguez-Mesa}

\thanks{Last modification: \today.}

\subjclass[2010]{42B35, 42B30, 42B25.}

\keywords{Hardy spaces, molecules, local, variable exponent.}

\date{}


\begin{abstract}
In this paper we introduce variable exponent local Hardy spaces $\hLp$ associated with a non-negative self-adjoint operator $L$. We assume that, for every $t>0$, the operator $e^{-tL}$ has an integral representation whose kernel satisfies a Gaussian upper bound. We define $\hLp$ by using an area square integral involving the semigroup $\{e^{-tL}\}_{t>0}$. A molecular characterization of $\hLp$ is established. As an application of the molecular characterization we prove that $\hLp$ coincides with the (global) Hardy space $\HLp$ provided that $0$ does not belong to the spectrum of $L$. Also, we show that $\hLp=H_{L+I}^{p(\cdot)}(\mathbb R^n)$.
\end{abstract}

\maketitle

\section{Introduction}

Hardy spaces were originated in the first half of the 20th century in the setting of Fourier series and complex analysis in one variable. The foundations of the real Hardy space $H^p(\mathbb R^n)$, $0<p\leq 1$, were laid in the celebrated paper of Fefferman and Stein \cite{FS}. After that article and until today, Hardy spaces, and their applications and generalizations, have been an active and flourishing area of work. As it is well-known, an important fact is that Hardy spaces $H^p(\mathbb R^n)$ serve as a substitute of $L^p(\mathbb R^n)$ when $0<p\leq 1$ in many applications.

Hardy spaces, initially defined on $\mathbb R^n$, have been extended to other settings having different underlying spaces (see \cite{AM}, \cite{AMR}, \cite{CW}, \cite{HyYaYa}, \cite{Stri} and \cite{Tol}, among
others). Hardy spaces $H^p(\mathbb R^n)$ are closely connected with the Laplacian operator $\Delta=\sum_{i=1}^{n}\partial^2/\partial x_i^2$ in $\mathbb R^n$. These spaces can be characterized in several ways (maximal functions, Littlewood-Paley square functions, Riesz transforms,...) where $\Delta$ appears. In order to analyze problems where other operators $L$, different from $\Delta$, are involved, new Hardy spaces have been defined in the last decade and they are adapted, in some sense, to $L$.

It is usually considered a non-negative and self-adjoint operator $L:D(L)\subset L^2(\mathbb R^n)\rightarrow L^2(\mathbb R^n)$. Then, the operator $L$ generates a bounded analytic semigroup $\{e^{-tL}\}_{t>0}$ (\cite{Ou1}). Also, it is assumed that the so-called heat semigroup $\{e^{-tL}\}_{t>0}$ associated with $L$ satisfies some kind of off-diagonal estimates. Auscher, Duong and McIntosh (\cite{ADM}) and Duong and Yang (\cite{DY2} and \cite{DY1}) defined Hardy spaces related to operators $L$ such that, for every $t>0$, the operator $e^{-tL}$ is an integral operator whose kernel satisfies a pointwise Gaussian upper bound. There exist operators, for instance, second order divergence form elliptic operators on $\mathbb R^n$ with complex coefficients, where pointwise heat kernels bounds may fail. In \cite{AMR} and \cite{HM} Hardy spaces associated with operators using only Davies-Gaffney type estimates in place of pointwise kernel bounds were studied.

Weighted Hardy spaces adapted to operators such that $\{e^{-tL}\}_{t>0}$ fulfills reinforced off-diagonal estimates were introduced in \cite{BCKYY}. This off-diagonal estimate property had been considered by Auscher and Martell (\cite{AMar}).

The principle that Hardy spaces $H^p(\mathbb R^n)$ are like $L^p(\mathbb R^n)$ when $0<p<1$ breaks down in some key points (see \cite{Go}). In order to solve this problem, Goldberg (\cite{Go}) introduced the so-called local Hardy spaces $h^p(\mathbb R^n)$. These Hardy spaces are more suited to problems related with partial differential equations. Distributions in $H^p(\mathbb R^n)$ are boundary values of conjugate harmonic functions in the upper-half space $\mathbb R^{n+1}_+$. For distributions in $h^p(\mathbb R^n)$, $\mathbb R^{n+1}_+$ is replaced by the strip $\mathbb R^n\times (0,1)$. Local Hardy spaces can be characterized by the corresponding localized maximal operators, Littlewood-Paley square functions and Riesz transforms. Local Hardy spaces have also been extended to other settings (\cite{BD}, \cite{CMM}, \cite{DY}, \cite{T} and \cite{Ya}) and adapted to operators (\cite{CMY}, \cite{GLiYa}, \cite{JiYaZh} and \cite{Kem}).

Hardy spaces with variable exponent were studied by Cruz-Uribe and Wang (\cite{CuW}) and Nakai and Sawano (\cite{NS}). The role of $L^p$-spaces is now played by the variable exponent Lebesgue spaces $L^{p(\cdot)}(\mathbb R^n)$. In the monographs \cite{CuF} and \cite{DHHR} a systematic and exhaustive study of $L^{p(\cdot)}$-spaces is presented. We will give below the properties of the spaces $L^{p(\cdot)}(\mathbb R^n)$ that we will use throughout this paper.

In \cite{CuW} and \cite{NS}, the variable exponent Hardy spaces $H^{p(\cdot)}(\mathbb R^n)$ are characterized by using the maximal functions and atomic representations. Sawano \cite{Sa} generalized the atomic characterizations of $H^{p(\cdot)}(\mathbb R^n)$. Hardy spaces $H^{p(\cdot)}(\mathbb R^n)$ are described in terms of Riesz transforms and intrinsic square functions in \cite{YaZhN} and \cite{ZhYaLi}, respectively. The dual spaces of $H^{p(\cdot)}(\mathbb R^n)$ are characterized as Campanato spaces in \cite{NS}.

Variable exponent Hardy spaces $H^{p(\cdot)}(X)$ when $X$ is a RD-homogeneous space were studied by Zhuo, Sawano and Yang (\cite{ZhSaYa}).

Lorentz spaces with variable exponents were investigated by Ephremidze, Kokilashvili and Samko \cite{EKS} and Kempka and Vyb\'iral \cite{KV}. In \cite{ABR} and \cite{LiYaYu} Hardy-Lorentz spaces with variable exponents have also been studied.

Diening, H\"ast\"o and Roudenko (\cite{DHR}) defined the variable exponent local Hardy space $h^{p(\cdot)}(\mathbb R^n)$ as a Triebel-Lizorkin space $F_{p(\cdot),2}^0(\mathbb R^n)$ with variable integrability. Nakai and Sawano (\cite{NS}) characterized $h^{p(\cdot)}(\mathbb R^n)$ by using local maximal functions. Local Hardy spaces of Musielak-Orlicz type were studied in \cite{YaYa}. Musielak-Orlicz and variable exponent Hardy spaces do not cover each other.

Hardy spaces with variable exponent associated with operators have been studied in the last years. Yang and Zhuo (\cite{YaZh1} and \cite{ZhYa}) considered non-negative and self-adjoint operators such that the semigroup $\{e^{-tL}\}_{t>0}$ satisfies upper Gaussian bounds. In \cite{YaZhZh} the pointwise boundedness is replaced by reinforced off-diagonal estimates.

Our objective in this article is to introduce local Hardy spaces with variable exponents associated with operators. We prove a molecular characterization for these local Hardy spaces. By using this characterization, we establish relations between our local Hardy spaces with variable exponent and its global counterpart. 

Before stating our results we recall some definitions and properties that we will use in the sequel.

Assume that $p:\mathbb R^n\rightarrow (0,\infty)$ is a measurable function. We define 
\[p^-=\underset{x\in \mathbb R^n}{\essinf} p(x), \quad p^+=\underset{x\in \mathbb R^n}{\esssup} p(x).\]
Here, we suppose that $0<p^-\leq p^+<\infty$. The modular of a measurable complex-valued function $f$ defined on $\mathbb R^n$ is given by
\[\varrho_{p(\cdot)}(f):=\int_{\mathbb R^n} |f(x)|^{p(x)}dx.\]
The variable exponent Lebesgue space $L^{p(\cdot)}(\mathbb R^n)$ consists of all those measurable complex-valued functions $f$ on $\mathbb R^n$ for which $\varrho_{p(\cdot)}(f)<\infty$. The Luxemburg norm on $L^{p(\cdot)}(\mathbb R^n)$ is defined by
\[\|f\|_{p(\cdot)}=\inf\bigg\{\lambda>0: \varrho_{p(\cdot)}\left(\frac{f}{\lambda}\right)\leq 1 \bigg\}, \quad f\in L^{p(\cdot)}(\mathbb R^n).\]
An extensive study about $L^{p(\cdot)}$-spaces can be encountered in \cite{CuF} and \cite{DHHR}.

A fundamental tool in harmonic analysis and, in particular, in the study of Hardy spaces is the Hardy-Littlewood maximal function $\mathcal M$. Several authors have studied $L^{p(\cdot)}$-boundedness properties of $\mathcal M$. The class of globally log-H\"older continuous functions plays an important role. 

We say that an exponent $p$ defined on $\mathbb R^n$ is globally log-H\"older continuous, abbreviated $p\in C^{\rm log}(\mathbb R^n)$, if the following two properties hold:
\begin{enumerate}[label=(LH\arabic*)]
\item \label{def: LH0} There exists $C>0$ such that 
\begin{equation*}
	|p(x)-p(y)|\leq \frac{C}{\log(e+1/|x-y|)},\quad x,y\in\mathbb R^n.
	\end{equation*}
\item \label{def: LHinfty} There exist $C>0$ and $p_\infty\geq 0$ for which
	\begin{equation*}
	|p(x)-p_\infty|\leq \frac{C}{\log(e+|x|)}, \quad  x\in \mathbb R^n.
	\end{equation*}
\end{enumerate}

In \cite{DHHMS} it was proved that $\mathcal M$ is bounded from $L^{p(\cdot)}(\mathbb R^n)$ into itself provided that $p\in C^{\rm log}(\mathbb R^n)$ and $p^->1$ (see also \cite{CuFN}). It is possible to obtain boundedness results for $\mathcal M$ under weaker conditions (\cite{D}, \cite{Le}, \cite{Nek}).

Throughout this paper we consider a densely defined operator $L:D(L)\subset L^2(\mathbb R^n)\rightarrow L^2(\mathbb R^n)$, where $D(L)$ denotes the domain of $L$, satisfying the following two assumptions: 
\begin{enumerate}[label=(A\arabic*)]
\item \label{positive-sa} $L$ is non-negative and self-adjoint. (Note that according to \ref{positive-sa} the ope\-ra\-tor $-L$ generates a bounded analytic semigroup $\{e^{-tL}\}_{t>0}$ (see \cite{Ou1})).
\end{enumerate}

\begin{enumerate}[label=(A\arabic*)]
\setcounter{enumi}{1}
\item \label{heatkernel} For every $t>0$, there exists a measurable function $W_t^L$ defined on $\mathbb R^n\times \mathbb R^n$ such that, for every $f\in L^2(\mathbb R^n)$, 
\[e^{-tL}f(x)=\int_{\mathbb R^n} W_t^L(x,y)f(y)dy,\quad x\in \mathbb R^n,\]
and it satisfies the following pointwise upper Gaussian bound:
\begin{equation}\label{gaussiankernel}
|W_t^L(x,y)|\leq C \frac{e^{-c\frac{|x-y|^2}{t}}}{t^{n/2}}, \quad x,y\in\mathbb R^n, 
\end{equation}
for some positive constants $c$ and $C$.
\end{enumerate}

From \cite[Theorem~6.17]{Ou1}, since $\{e^{-tL}\}_{t>0}$ is a bounded analytic semigroup and, for every $t>0$, \eqref{gaussiankernel} holds, it follows that, for every $k\in \mathbb N _0=\mathbb{N}\cup \{0\}$, there exist $c, C>0$ such that
\begin{equation}\label{gaussianderivatives}
\left|t^k \frac{\partial^k}{\partial t^k} W_t^L(x,y)\right|\leq \frac{C e^{-c|x-y|^2/t}}{t^{n/2}}, \quad x,y\in \mathbb R^n, \, t>0.
\end{equation}

As it can be seen in \cite[p. 227]{McI}, since $L$ satisfies the assumption \ref{positive-sa}, $L$ has a bounded functional calculus and satisfies quadratic estimates (see for instance \cite[\S 8, Theorem, p.~225]{McI}). In particular, for every $k\in \mathbb N_0$, we have that 
\begin{equation}\label{boundedL-Pfunction}
\|g_L^k (f)\|_2\leq C\|f\|_2, \quad f\in L^2(\mathbb R^n),
\end{equation}
where $g_L^k$ represents the $k$-th vertical Littlewood-Paley square function associated with $L$ which is defined by
\[g_L^k(f)(x)=\left(\int_0^\infty |(tL)^k e^{-tL}(f)(x)|^2 \frac{dt}{t}\right)^{1/2},\quad x\in \mathbb R^n,\]
for $f\in L^2(\mathbb R^n)$.

There exist many operators satisfying the assumptions \ref{positive-sa} and \ref{heatkernel}. For instance, the following operators fulfill both assumptions (\cite{Ou2}):
\begin{enumerate}[label=(\roman*)]
\item $L=-\sum\limits_{k,j=1}^n \frac{\partial}{\partial x_j}\left(a_{kj}\frac{\partial}{\partial x_k}\right)$ where $a_{kj}=a_{jk}$ are real-valued functions in $L^\infty(\mathbb R^n)$, $k,j=1,\dots,n$, and $L$ is a uniformly elliptic operator;
\item the Schr\"odinger operator with magnetic field defined by
\[L=-\sum\limits_{k=1}^n \left(\frac{\partial}{\partial x_k}-i b_k\right)^2+V,\]
where, for every $k\in \{1,...,n\}$, $b_k\in L^2_{\rm{loc}}(\mathbb R ^n)$ is a real-valued function and $0\leq V\in L^1_{\rm{loc}}(\mathbb R ^n)$.
\end{enumerate}

We now define the local Hardy space $\hLp$ with variable exponent $p(\cdot)$ associated to the operator $L$. Our definition is motivated by those ones due to Carbonaro, McIntosh and Morris (\cite{CMM}), who defined the local Hardy space $h^1$ of differential form on Riemannian manifolds, and Cao, Mayboroda and Yang (\cite{CMY}) who introduced local Hardy spaces $h^p_A$, $0<p\leq 1$ associated with inhomogeneous higher order elliptic operators $A$.

We consider the area square function $S_L$ defined by
\[S_L(f)(x)=\left(\int_0^\infty \int_{B(x,t)} |t^2 L e^{-t^2L}(f)(y)|^2 \frac{dy dt}{t^{n+1}}\right)^{1/2},\quad x\in \mathbb R^n,\]
and the localized area square integral function $S_L^{\rm{loc}}$ defined by
\[S_L^{\rm{loc}}(f)(x)=\left(\int_0^1 \int_{B(x,t)} |t^2 L e^{-t^2L}(f)(y)|^2 \frac{dy dt}{t^{n+1}}\right)^{1/2},\quad x\in \mathbb R^n,\]
where $B(x,t)$ denotes the ball in $\mathbb R^n$ centered at $x$ with radius $t$.

By \eqref{boundedL-Pfunction} we deduce that both $S_L$ and $S_L^{\rm{loc}}$ are bounded (sublinear) operators from $L^2(\mathbb R^n)$ into itself.

The (global) Hardy space $\HLp$ associated with $L$ was defined in \cite{YaZh1} as follows. A function $f\in L^2(\mathbb R^n)$ is in $\mathbb H^{p(\cdot)}_L(\mathbb R^n)$ when $S_L(f)\in L^{p(\cdot)}(\mathbb R^n)$. The Hardy space $\HLp$ is the completion of $\mathbb H^{p(\cdot)}_L(\mathbb R^n)$ with respect to the quasi-norm $\|\cdot\|_{\HLp}$ given by
\[\|f\|_{\HLp}=\|S_L(f)\|_{p(\cdot)}, \quad f\in \mathbb H^{p(\cdot)}_L(\mathbb R^n).\]

We introduce the local Hardy space $\hLp$ associated with $L$ in the following way. A function $f\in L^2(\mathbb R^n)$ is in $\mathbbl{h}^{p(\cdot)}_L(\mathbb R^n)$ when $S_L^{\rm{loc}}(f)\in L^{p(\cdot)}(\mathbb R^n)$ and $S_I(e^{-L}f)\in L^{p(\cdot)}(\mathbb R^n)$. Here, $S_I$ denotes the area square integral associated with the identity operator. We consider the quasi-norm $\|\cdot\|_{\hLp}$ given by
\[\|f\|_{\hLp}=\|S_L^{\rm{loc}}(f)\|_{p(\cdot)}+\|S_I(e^{-L}f)\|_{p(\cdot)}, \quad f\in \mathbbl h^{p(\cdot)}_L(\mathbb R^n).\]
We define the local Hardy space $\hLp$ as the completion of $\mathbbl{h}^{p(\cdot)}_L(\mathbb R^n)$ with respect to $\|\cdot\|_{\hLp}$. 

Our local Hardy space $\hLp$ should meet the following two properties:
\begin{enumerate}[label=(\Roman*)]
\item \label{globalinlocal}$\HLp \subset \hLp$;
\item \label{var=const}if $p(x)=p$, $x\in \mathbb R^n$, with $0<p\leq 1$, then $\hLp=h^p_L(\mathbb R^n)$ where, by $h^p_L(\mathbb R^n)$ we denote the local Hardy space (with constant exponent) defined in \cite{Go}, \cite{JiYaZh} and \cite{Kem}.
\end{enumerate}

It is not clear from the above definition that \ref{globalinlocal} and \ref{var=const} hold. However these properties will be transparent by using the molecular characterization of $\hLp$ that we establish in Theorem \ref{molchar}.

Let $B=B(x_B,r_B)$ be a ball in $\mathbb R^n$ with $x_B\in \mathbb R^n$ and $r_B>0$. If $\lambda>0$, we define $\lambda B=B(x_B,\lambda r_B)$. For every $i\in\mathbb N$, $S_i(B)=2^iB\setminus 2^{i-1}B$ and $S_0(B)=B$. Let $M\in \mathbb N _0$ and $\varepsilon>0$. We distinguish two types of local molecules: a measurable function $m\in L^2(\mathbb R^n)$ is a $(p(\cdot),2,M,\varepsilon)_{L,{\rm loc }}$-molecule associated to the ball $B=B(x_B, r_B)$ 
\begin{enumerate}[label=(\arabic*)]
\item[(a)] of (I)-type: when $r_B\geq 1$ and $\|m\|_{L^2(S_i(B))}\leq 2^{-i\varepsilon}|2^iB|^{1/2}\|\chi_{2^iB}\|_{p(\cdot)}^{-1}$, $i\in \mathbb N _0$, 
\item[(b)] of (II)-type: $r_B\in (0,1)$ and there exists $b\in L^2(\mathbb R^n)$ such that $m=L^M b$ and, for every $k\in \{0,1,\dots,M\}$,
\[\|L^k(b)\|_{L^2(S_i(B))}\leq 2^{-i\varepsilon}r_B^{2(M-k)}|2^iB|^{1/2}\|\chi_{2^iB}\|_{p(\cdot)}^{-1},\quad i\in \mathbb N _0. \]
\end{enumerate}

We say that $f\in L^2(\mathbb R^n)$ is in the space $\mathbbl h^{p(\cdot)}_{L,\textrm{mol},M,\varepsilon}(\mathbb R^n)$ when, for every $j\in \mathbb N$, there exist $\lambda_j>0$ and a $(p(\cdot),2,M,\varepsilon)_{L,{\rm loc}}$-molecule $m_j$ associated to the ball $B_j$ such that $f=\sum\limits_{j\in \mathbb{N}}\lambda_j m_j$ in $L^2(\mathbb R^n)$ and 
\[\left(\sum\limits_{j\in \mathbb{N}}\left(\frac{\lambda_j \chi_{B_j}}{\|\chi_{B_j}\|_{p(\cdot)}}\right)^{\underline{p}}\right)^{1/\underline{p}}\in L^{p(\cdot)}(\mathbb R^n).\]
Here and in the sequel, $\underline{p}=\min\{1,p^-\}$. We define the quasi-norm $\|\cdot\|_{h^{p(\cdot)}_{L,\textrm{mol},M,\varepsilon}(\mathbb R^n)}$ as follows. For every $f\in \mathbbl h^{p(\cdot)}_{L,\textrm{mol},M,\varepsilon}(\mathbb R^n)$, 
\[\|f\|_{h^{p(\cdot)}_{L,\textrm{mol},M,\varepsilon}(\mathbb R^n)}=\inf \left\|\left(\sum\limits_{j\in \mathbb{N}}\left(\frac{\lambda_j \chi_{B_j}}{\|\chi_{B_j}\|_{p(\cdot)}}\right)^{\underline{p}}\right)^{1/\underline{p}}\right\|_{p(\cdot)},\]
where the infimum is taken over all the sequences $\{(\lambda_j,B_j)\}_{j\in \mathbb N}$ such that, for every $j\in \mathbb N$, $\lambda_j>0$ and $B_j$ is a ball for which there exists a $(p(\cdot),2,M,\varepsilon)_{L,{\rm loc}}$-molecule $m_j$ associated with it, verifying $f=\sum\limits_{j\in \mathbb{N}}\lambda_j m_j$ in $L^2(\mathbb R^n)$ and 
\[\left(\sum\limits_{j\in \mathbb{N}}\left(\frac{\lambda_j \chi_{B_j}}{\|\chi_{B_j}\|_{p(\cdot)}}\right)^{\underline{p}}\right)^{1/\underline{p}}\in L^{p(\cdot)}(\mathbb R^n).\]
The space $h^{p(\cdot)}_{L,\textrm{mol},M,\varepsilon}(\mathbb R^n)$ is the completion of $\mathbbl h^{p(\cdot)}_{L,\textrm{mol},M,\varepsilon}(\mathbb R^n)$ with respect to \newline
$\|\cdot\|_{h^{p(\cdot)}_{L,\textrm{mol},M,\varepsilon}(\mathbb R^n)}$.

We now establish a molecular characterization of the space $\hLp$.

\begin{thm}\label{molchar}
Let $p\in C^{\rm log}(\mathbb R^n)$ such that $p^+<2$. 
\begin{enumerate}[label=(\roman*)]
\item \label{molinlocal}If $\varepsilon >n(\frac{1}{p^-}-\frac{1}{p^+})$ and $M\in \mathbb N$ such that $2M>n(2/p^--1/2-1/p^+)$, then $h^{p(\cdot)}_{L,\textrm{mol},M,\varepsilon}(\mathbb R^n)\subset \hLp$.
\item \label{localinmol}If $\varepsilon>0$ and $M\in \mathbb N$, then $\hLp \subset h^{p(\cdot)}_{L,\textrm{mol},M,\varepsilon}(\mathbb R^n)$.
\end{enumerate}
The embeddings in \ref{molinlocal} and \ref{localinmol} are algebraic and topological.
\end{thm}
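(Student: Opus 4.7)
The plan is a two-sided argument that mirrors the standard approach for operator-adapted Hardy spaces, but with the extra bookkeeping forced by the splitting of molecules into type (I) ($r_B\geq 1$) and type (II) ($r_B<1$).

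For part \ref{molinlocal}, I would first prove that a single $(p(\cdot),2,M,\varepsilon)_{L,\mathrm{loc}}$-molecule $m$ associated to a ball $B$ satisfies the off-diagonal estimate
\begin{equation*}
\|S_L^{\mathrm{loc}}(m)\|_{L^2(S_i(B))}+\|S_I(e^{-L}m)\|_{L^2(S_i(B))}\leq C\,2^{-i\eta}|2^iB|^{1/2}\|\chi_{2^iB}\|_{p(\cdot)}^{-1},\quad i\in\mathbb N_0,
\end{equation*}
for some $\eta>n(\tfrac{1}{p^-}-\tfrac{1}{p^+})$. For type (II) molecules $m=L^Mb$, the $t$-integral defining $S_L^{\mathrm{loc}}$ is split at $t=r_B$: on $(0,r_B)$ use the defining $L^2$-bounds of $m$ together with the quadratic estimate \eqref{boundedL-Pfunction}, and on $(r_B,1)$ rewrite $t^2Le^{-t^2L}m=(t^2L)^{M+1}e^{-t^2L}b$ so that the factor $(r_B/t)^{2M}$ appears, combined with the Gaussian bound \eqref{gaussianderivatives} to produce spatial decay in the annulus $S_i(B)$ for $i\geq 2$. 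The condition $2M>n(2/p^--1/2-1/p^+)$ is exactly what makes the double sum over scales and annuli converge after a Minkowski/Cauchy--Schwarz step. For type (I) molecules the local integral only runs up to $t=1\leq r_B$, so the Gaussian bound alone yields the decay; the estimate on $S_I(e^{-L}m)$ is handled in both cases by regarding $e^{-L}$ as a Gaussian-decaying off-diagonal operator and then using the $L^2$-boundedness of $S_I$.

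Summing these single-molecule estimates in the variable-exponent quasi-norm is carried out by a Fefferman--Stein vector-valued maximal inequality on $L^{p(\cdot)/\underline p}(\mathbb R^n)$, which is available because $p\in C^{\mathrm{log}}(\mathbb R^n)$ and $p^+<2$ ensure $(p(\cdot)/\underline p)^->1$ after a suitable power rescaling; at this step the hypothesis $\varepsilon>n(1/p^--1/p^+)$ is used to convert the $2^{-i\eta}$-decay into absolute summability through the log-H\"older estimate relating $\|\chi_B\|_{p(\cdot)}$ and $\|\chi_{2^iB}\|_{p(\cdot)}$.

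For part \ref{localinmol} I would apply the Calder\'on reproducing formula
\begin{equation*}
f=c_M\int_0^\infty (t^2L)^{M+1}e^{-t^2L}f\,\frac{dt}{t}\quad\text{in }L^2(\mathbb R^n),
\end{equation*}
and split the integral at $t=1$. On the local part $(0,1)$ the function $(x,t)\mapsto (t^2L)^{M+1/2}e^{-t^2L/2}f(x)\chi_{(0,1)}(t)$ belongs to a local tent space whose norm is controlled by $\|S_L^{\mathrm{loc}}(f)\|_{p(\cdot)}$; a variable-exponent tent-space atomic decomposition (a local version of the Zhuo--Sawano--Yang theory) writes it as $\sum_j\lambda_jA_j$ with $A_j$ supported in $\widehat B_j$, $r_{B_j}<1$, and integrating each $A_j$ against $(t^2L)^{M+1/2}e^{-t^2L/2}$ produces a type (II) molecule. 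On the tail $t\geq 1$, factor $(t^2L)^{M+1}e^{-t^2L}=(t^2L)^{M+1}e^{-(t^2-1)L}e^{-L}$, so the input becomes $g:=e^{-L}f$, whose area function $S_I(g)$ is in $L^{p(\cdot)}$ by assumption; the corresponding tent-space decomposition yields atoms supported at scales $r_{B_j}\geq 1$, and their synthesis through the semigroup produces type (I) molecules. The molecular quasi-norm is then estimated by the tent-space atomic quasi-norm, which equals $\|S_L^{\mathrm{loc}}(f)\|_{p(\cdot)}+\|S_I(e^{-L}f)\|_{p(\cdot)}$ up to constants.

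The main obstacle is carrying out part \ref{localinmol} cleanly: one needs a variable-exponent tent-space atomic decomposition whose atoms, after synthesis through the semigroup, yield molecules with exactly the normalization $|2^iB|^{1/2}\|\chi_{2^iB}\|_{p(\cdot)}^{-1}$, the factor $r_B^{2(M-k)}$ in the type (II) case, and a clean separation of scales at $t=1$. Matching these requirements simultaneously for the contributions of both $S_L^{\mathrm{loc}}(f)$ and $S_I(e^{-L}f)$, and proving that the resulting molecular quasi-norm is controlled by $\|f\|_{\hLp}$, is the technical heart of the argument.
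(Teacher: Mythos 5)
Your Part \ref{molinlocal} is essentially the paper's argument: prove a single-molecule off-diagonal decay $\|S_L^{\rm loc}(m_j)\|_{L^2(S_i(B_j))}+\|S_I(e^{-L}m_j)\|_{L^2(S_i(B_j))}\lesssim 2^{-i\eta}|2^iB_j|^{1/2}\|\chi_{2^iB_j}\|_{p(\cdot)}^{-1}$ with $\eta>n(1/p^--1/p^+)$, split the two molecule types, and then sum in $L^{p(\cdot)}$; the paper packages the summation step as Lemma \ref{tecnico} (using \cite[Proposition~2.11]{ZhSaYa} and \cite[Remark~3.16]{YaZh1}) and cites \cite[(3.13)]{YaZhZh} for the $S_L^{\rm loc}$ bound on type (II) molecules, but the strategy is the same as yours.

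For Part \ref{localinmol}, however, there is a genuine gap at exactly the point you flag as the ``technical heart.'' You split the Calder\'on formula $f=c_M\int_0^\infty(t^2L)^{M+1}e^{-t^2L}f\,\frac{dt}{t}$ at $t=1$ and claim that the tail, after factoring out $e^{-L}$, can be treated through the tent-space decomposition controlled by $\|S_I(e^{-L}f)\|_{p(\cdot)}$, ``yielding atoms supported at scales $r_{B_j}\geq 1$.'' Neither step follows. First, the natural tent-space element associated with $S_I(g)\in L^{p(\cdot)}$ is $(y,t)\mapsto te^{-t^2}g(y)$, whereas your tail integrand is $(t^2L)^{M+1}e^{-(t^2-1)L}g$; these are not linked by a synthesis operator of the $\Pi_M$ type, so the hypothesis $S_I(e^{-L}f)\in L^{p(\cdot)}$ does not directly control the tail. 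Second, and more importantly, the tent-space atomic decomposition of $te^{-t^2}g$ produces atoms supported in $\widehat{B_j}$ for balls of \emph{arbitrary} radius, not radius $\geq 1$; the lower bound $r_{B_j}\geq 1$ is not automatic. Obtaining it is the whole point of the paper's Section~3: one first proves $H_I^{p(\cdot)}(\mathbb R^n)=\LpQ$ (Theorem \ref{LpQ=HIp}) and then uses the $L^{p(\cdot)}_{\mathcal Q}$-atomic decomposition (Theorem \ref{atomchar}), which by construction has all atoms supported on balls of radius exactly $1$. This step is what lets type (I) molecules appear.

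Closely related is the choice of reproducing formula. The paper does not split the global Calder\'on formula at $t=1$; it invokes the Bui--Duong--Yan local reproducing formula \cite[Theorem~2.3]{BDY},
$f=c_{N+2}\int_0^1(t^2L)^{N+2}e^{-2t^2L}f\,\frac{dt}{t}+\sum_{\ell=0}^{N+1}c_\ell L^\ell e^{-2L}f$,
in which the tail is already a \emph{finite sum} of operators applied to $f$, each factoring as $L^\ell e^{-L}(e^{-L}f)$. Writing $e^{-L}f=\sum_j\lambda_ja_j$ with $L^{p(\cdot)}_{\mathcal Q}$-atoms $a_j$ on unit balls, one checks directly that $L^\ell e^{-L}a_j$ is a type (I) local molecule; the local integral part $\int_0^1$ is handled by the operator $\pi_N$ of Lemma \ref{pin} applied to the local tent-space decomposition of $t^2Le^{-t^2L}f$. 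If you want to keep your splitting strategy you would still need to (a) replace the tail integral by a quantity that $S_I(e^{-L}f)$ actually controls and (b) establish a radius-$\geq 1$ atomic decomposition of $H_I^{p(\cdot)}(\mathbb R^n)$, at which point you will essentially have rediscovered Section~3 of the paper.
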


In \cite[Definition~1.6]{ZhYa} Zhuo and Yang defined global $(p(\cdot),2,M)_L$-atoms with $M\in \mathbb N$ as follows. A function $a$ is a $(p(\cdot),2,M)_L$-atom associated with the ball $B$ when there exists $b\in L^2(\mathbb R^n)$ such that $a=L^M b$, $\supp (L^k b)\subset B$ for every $k\in \{0,1,\dots, M\}$ and 
\[\|L^k b\|_2\leq r_B^{2(M-k)}|B|^{1/2}\|\chi_B\|_{p(\cdot)}^{-1}, \quad k\in \{0,1,\dots, M\}, \]
where $r_B$ represents the radius of $B$.

Zhuo and Yang (\cite[Theorem~1.8]{ZhYa}) proved that if $p\in C^{\rm log}(\mathbb R^n)$ with $p^+\in (0,1]$ (in fact, the proof is also valid for $p^+\in (0,2)$) and $M\in \mathbb N$, $M>\frac{n}{2}[\frac{1}{p^-}-1]$, then, for every $f\in L^2(\mathbb  R^n)\cap \HLp$, there exist, for each $j\in \mathbb N$, $\lambda_j>0$ and a $(p(\cdot),2,M)_L$-atom associated with the ball $B_j$ such that $f=\sum_{j\in \mathbb{N}}\lambda_j a_j$ in $L^2(\mathbb R^n)$ and 
\[\left(\sum_{j\in\mathbb{N}}\left(\frac{\lambda_j \chi_{B_j}}{\|\chi_{B_j}\|_{p(\cdot)}}\right)^{\underline{p}}\right)^{1/\underline{p}}\in L^{p(\cdot)}(\mathbb R^n).\]
If $\alpha\in \mathbb R$, we denote $[\alpha]=\inf\{k\in\mathbb{Z}: k+1>\alpha\}$. 

It is not hard to see that if $a$ is a $(p(\cdot),2,M)_L$-atom, then $a$ is also a $(p(\cdot),2,M,\varepsilon)_{L, \rm{loc}}$-molecule, for every $M\in \mathbb N$ and $\varepsilon>0$.

From Theorem \ref{molchar} we deduce that $\HLp \subset \hLp$, provided that $p\in C^{\rm log}(\mathbb R^n)$ and $p^+<2$. Hence, the above property \ref{globalinlocal} holds.

On the other hand, the molecular and atomic characterizations established in \cite{Go}, \cite{GLiYa}, \cite{JiYaZh} and \cite{Kem}, and Theorem \ref{molchar} allow us to prove that the above property \ref{var=const} is also satisfied.

Also, from Theorem \ref{molchar} we deduce the following result.
\begin{cor}\label{Lp=Hp} Let $p\in C^{\rm log}(\mathbb R^n)$ such that $p^+<2$. Then, $\hLp \subset L^{p(\cdot)}(\mathbb R^n)$. Furthermore, $\hLp=\HLp=L^{p(\cdot)}(\mathbb R^n)$, provided that $p^->1$.
\end{cor}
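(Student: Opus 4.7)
The plan is to obtain the corollary as a direct consequence of the molecular characterization in Theorem \ref{molchar}, chained with two standard results from variable-exponent harmonic analysis. Three inclusions are involved: $\hLp\subset L^{p(\cdot)}(\mathbb R^n)$ (valid under the standing hypothesis $p^+<2$), $L^{p(\cdot)}(\mathbb R^n)\subset\HLp$ (here the extra assumption $p^->1$ is used), and $\HLp\subset\hLp$, the last of which has already been noted in the paragraph preceding the corollary, as a consequence of Theorem \ref{molchar} together with the observation that every $(p(\cdot),2,M)_L$-atom is a $(p(\cdot),2,M,\varepsilon)_{L,\rm loc}$-molecule.

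For $\hLp\subset L^{p(\cdot)}(\mathbb R^n)$, I would fix admissible $M$ and $\varepsilon$ for Theorem \ref{molchar}(\ref{localinmol}), decompose $f\in\hLp\cap L^2(\mathbb R^n)$ as $f=\sum_j\lambda_jm_j$ in $L^2$ with local molecules $m_j$ adapted to balls $B_j$, and reduce matters to the estimate
\[\left\|\sum_j\lambda_jm_j\right\|_{p(\cdot)}\leq C\left\|\left(\sum_j\left(\frac{\lambda_j\chi_{B_j}}{\|\chi_{B_j}\|_{p(\cdot)}}\right)^{\underline{p}}\right)^{1/\underline{p}}\right\|_{p(\cdot)}.\]
For a molecule of either type (taking $k=M$ in the type (II) case) the annular $L^2$-bound $\|m_j\|_{L^2(S_i(B_j))}\leq 2^{-i\varepsilon}|2^iB_j|^{1/2}\|\chi_{2^iB_j}\|_{p(\cdot)}^{-1}$ holds. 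Picking $r\in(0,\min(p^-,2))$, Hölder's inequality on $S_i(B_j)$ yields an $L^r$-averaged control of $m_j$ by $C\,2^{-i\varepsilon}\|\chi_{2^iB_j}\|_{p(\cdot)}^{-1}$; interpreting this as a pointwise bound in terms of $\mathcal M(\chi_{B_j})$ and summing in $i$ using the geometric decay, the Fefferman-Stein vector-valued maximal inequality on $L^{p(\cdot)/r}(\mathbb R^n)$---available because $p\in C^{\rm log}$ and $p^-/r>1$---delivers the bound after summing in $j$.

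Assuming now $p^->1$, the remaining inclusion $L^{p(\cdot)}\cap L^2\subset\HLp$ amounts to $\|S_L(f)\|_{p(\cdot)}\leq C\|f\|_{p(\cdot)}$. Under the Gaussian estimate \eqref{gaussiankernel} together with the quadratic estimate \eqref{boundedL-Pfunction}, $S_L$ is a vector-valued Calderón-Zygmund operator, bounded on $L^q(w\,dx)$ for every $q\in(1,\infty)$ and every Muckenhoupt weight $w\in A_q$. Since $p\in C^{\rm log}$ with $1<p^-\leq p^+<\infty$, the Rubio de Francia extrapolation theorem in the variable Lebesgue setting (see \cite{CuF}) transfers this weighted bound to $L^{p(\cdot)}$-boundedness of $S_L$. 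Chaining $\hLp\subset L^{p(\cdot)}\subset\HLp\subset\hLp$ then yields the three-way equality.

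The main obstacle lies in the molecular step: one must treat type (I) and type (II) molecules on the same footing, choose an exponent $r$ strictly below both $p^-$ and $2$ (the hypothesis $p^+<2$ is what keeps us in the $L^2$-molecular regime where this is feasible), and then apply the vector-valued maximal inequality to sum in $j$. The subcase $p^-<1$ is the most delicate, because $\underline{p}=p^-<1$ forces all triangle-type inequalities to be performed in the $\underline{p}$-quasi-norm; the geometric decay $2^{-i\varepsilon}$ in the annular bounds is what absorbs the growth of the maximal function on far-off annuli. The second step is a routine consequence of variable-exponent extrapolation, contingent only on verifying that $S_L$ enjoys the required $A_q$-weighted bounds, which follow from the Gaussian kernel estimates.
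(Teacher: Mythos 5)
Your proposal takes essentially the same route as the paper: you obtain $\hLp\subset L^{p(\cdot)}(\mathbb R^n)$ from the molecular decomposition of Theorem \ref{molchar}(ii) together with an annulus-by-annulus maximal-function estimate (the paper wraps the H\"older/$L^r$-averaging step you describe inside the citations \cite[Proposition 2.11]{ZhSaYa} and \cite[Remark 3.16]{YaZh1}, with $w\in(0,p^-)$ playing the role of your $r$), and you get $L^{p(\cdot)}(\mathbb R^n)\cap L^2(\mathbb R^n)\subset\HLp$ for $p^->1$ from the weighted boundedness of $S_L$ (the paper cites \cite[Lemma 3.4]{LiZhZh} for $A_2$) plus Rubio de Francia extrapolation \cite[Corollary 1.10]{CuMP}, concluding by the chain $\hLp\subset L^{p(\cdot)}\subset\HLp\subset\hLp$. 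The only cosmetic differences are that the paper invokes those two propositions wholesale instead of unfolding the maximal-function mechanism, and deduces Cauchy-ness of the partial sums in $L^{p(\cdot)}$ and identifies the limit with $f$ via \cite[Lemma 3.2.10]{DHHR}, whereas you phrase the same step as a direct summation.
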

In order to show Theorem \ref{molchar} we need to consider the global Hardy space $\HIp$ associated with the identity operator. Note that, for every $t>0$, the operator $e^{-tI}$ is not an integral operator. However, as it was proved in \cite[Remark~3.9(b)]{CMY}, $e^{-tI}$ satisfies the Davies-Gaffney estimates, but it does not verify reinforced off-diagonal estimates. Then, the Hardy space $\HIp$ is not included in the ones studied in \cite{YaZhZh}. A remarkable property is the equality $\HIp=\LpQ$, established in Theorem \ref{LpQ=HIp}. The space $\LpQ$ is a space of mixed-norm with variable exponent (see Section 2 for details), and it is an extension to variable exponent of certain function spaces considered in \cite{CMY} and \cite{CMM}.

In the proof of Theorem \ref{molchar} we also consider the local tent space $t^{p(\cdot)}_2(\mathbb R^n)$ that is used to prove Theorem \ref{molchar}\ref{molinlocal}.

We now state a generalization of \cite[Theorem~7]{Kem}.

\begin{thm}\label{local=global}Let $p\in C^{\rm log}(\mathbb R^n)$ such that $p^+<2$. Then, $\hLp=\HLp$, provided that $\inf \sigma(L)>0$, where $\sigma(L)$ represents the spectrum of $L$ in $L^2(\mathbb R^n)$. The equality is algebraic and topological.
\end{thm}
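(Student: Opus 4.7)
The plan is to deduce the theorem from the molecular characterization in Theorem~\ref{molchar}. The inclusion $\HLp\subset \hLp$ has already been recorded in the discussion following Theorem~\ref{molchar} and does not rely on the spectral gap, so I focus on the reverse inclusion $\hLp\subset \HLp$ under the hypothesis $\alpha:=\inf\sigma(L)>0$. Given $f\in \hLp\cap L^2(\mathbb R^n)$, Theorem~\ref{molchar}\ref{localinmol} produces a local molecular decomposition $f=\sum_j \lambda_j m_j$ with the corresponding quasi-norm estimate. It then suffices to show that each $m_j$ is, with uniform constants, a global $\HLp$-molecule (by which I mean one of the form $m=L^Mb$ with the same annular $L^2$-bounds on $L^kb$ as in item (b) of the definition of local molecule, but now allowing arbitrary $r_B$). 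The converse embedding, from global molecules into $\HLp$, is a routine extension of the atomic argument of Zhuo--Yang in \cite{ZhYa}, parallel to the proof of Theorem~\ref{molchar}\ref{molinlocal}.

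Local molecules of type~(II), for which $r_B<1$, are already global molecules by definition. The essential case is type~(I), where $r_B\geq 1$ and no cancellation is built in. The spectral gap is exploited through two consequences of the functional calculus for $L$: first, $L^{-M}$ is bounded on $L^2(\mathbb R^n)$ with $\|L^{-M}\|_{L^2\to L^2}\leq \alpha^{-M}$; second, the semigroup satisfies $\|e^{-tL}\|_{L^2\to L^2}\leq e^{-\alpha t}$ for all $t>0$. Interpolating this $L^2$-decay with the Gaussian bound \eqref{gaussiankernel} yields the reinforced off-diagonal estimate
\[\|e^{-tL}(g\chi_E)\|_{L^2(F)}\leq C\,e^{-\alpha t/2}\,e^{-c\,d(E,F)^2/t}\,\|g\|_{L^2(E)},\]
for all measurable $E,F\subset \mathbb R^n$ and $g\in L^2(\mathbb R^n)$. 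Setting $b:=L^{-M}m$, so that $m=L^Mb$, I would write
\[L^k b = L^{-(M-k)}m = \frac{1}{(M-k-1)!}\int_0^\infty t^{M-k-1}\,e^{-tL}m\,dt,\quad 0\leq k<M,\]
decompose $m=\sum_{\ell\geq 0} m\chi_{S_\ell(B)}$, apply the reinforced bound annulus by annulus, and use the elementary inequality $\alpha t/2+c\,d^2/t\geq d\sqrt{2\alpha c}$ to pull a factor $e^{-c'\,d}$ outside the remaining $t$-integral. Summing in $\ell$ and using that $|2^iB|^{1/2}\|\chi_{2^iB}\|_{p(\cdot)}^{-1}$ varies polynomially across scales by the $C^{\log}$-regularity of $p$, one arrives at
\[\|L^k b\|_{L^2(S_i(B))}\leq C\,2^{-i\varepsilon'}\,r_B^{2(M-k)}\,|2^iB|^{1/2}\,\|\chi_{2^iB}\|_{p(\cdot)}^{-1},\quad i\geq 0,\]
for some $\varepsilon'>0$, where the trivial inequality $r_B^{2(M-k)}\geq 1$ (since $r_B\geq 1$) is only used to match the form of a global molecule; the case $k=M$ is tautological from the type-(I) bounds on $m$.

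The main obstacle is the derivation and careful application of this reinforced off-diagonal estimate. The exponential time decay provided by the spectral gap is indispensable: without it the integrand $t^{M-k-1}\,e^{-tL}m$ has no integrable tail at $t=\infty$ and the representation of $L^{-(M-k)}$ collapses; this is the precise point at which the hypothesis $\inf\sigma(L)>0$ enters the argument. A secondary technicality is to preserve a strictly positive decay exponent $\varepsilon'$ after summing the annular contributions against the $p(\cdot)$-dependent factors $|2^iB|^{1/2}\|\chi_{2^iB}\|_{p(\cdot)}^{-1}$; the $C^{\log}$-regularity of $p$ and the freedom to choose the original $\varepsilon$ large in Theorem~\ref{molchar}\ref{localinmol} leave ample slack. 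Once each $m_j$ is exhibited as a global molecule with uniform constants, convergence of $\sum_j \lambda_j m_j$ in $\HLp$ and the quasi-norm bound $\|f\|_{\HLp}\leq C\|f\|_{\hLp}$ follow along the lines of the proof of Theorem~\ref{molchar}\ref{molinlocal}.
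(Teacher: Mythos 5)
Your proposal is correct in substance but takes a genuinely different route from the paper, and the comparison is instructive. Both proofs begin from the local molecular decomposition supplied by Theorem~\ref{molchar}\ref{localinmol} and need only treat the reverse inclusion $\hLp\subset\HLp$. The paper then decomposes $S_L(f)\leq S_L^{\rm loc}(f)+S_L^{\infty}(f)$, absorbs the first term into $\|f\|_{\hLp}$, and controls $S_L^{\infty}(m_j)$ directly annulus by annulus via Lemma~\ref{tecnico}; the spectral gap enters through a result of Coulhon--Sikora (\cite[Proposition 2.2]{CS}), which upgrades the Gaussian bound on the kernel of $Le^{-t^2L}$ to $|k_t(x,y)|\lesssim e^{-c(t^2+|x-y|^2/t^2)}t^{-(n+2)}$, i.e.\ an extra factor $e^{-ct^2}$ that makes the $t\in(1,\infty)$ integral controllable. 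You instead leave the area integral intact and use the spectral gap at the level of the functional calculus: since $L^{-1}$ is bounded when $\inf\sigma(L)>0$, you set $b=L^{-M}m$, represent $L^{k}b=L^{-(M-k)}m$ by the Gamma-function formula $\Gamma(M-k)^{-1}\int_0^\infty t^{M-k-1}e^{-tL}m\,dt$ (integrable at $t=\infty$ thanks to $\|e^{-tL}\|_{2\to 2}\le e^{-\alpha t}$), and verify that a type-(I) local molecule is already a global $L$-molecule with uniform constants; type-(II) local molecules are global molecules by inspection. This cleanly separates what the spectral gap is for (restoring cancellation at scale $\geq 1$) from what Gaussian bounds are for (the molecular-to-$\HLp$ embedding). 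Your annular estimate for $L^kb$ is sound: the geometric-mean interpolation between the Davies--Gaffney bound and the exponential $L^2$-decay gives the reinforced estimate, and the elementary inequality you quote extracts an $e^{-c'd}$ factor dominating the polynomial growth of $|2^iB|^{1/2}\|\chi_{2^iB}\|_{p(\cdot)}^{-1}$.

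The one place your argument leans on something not in the paper and not fully worked out is the ``global molecule $\Rightarrow\HLp$'' embedding. You call it a routine extension of \cite{ZhYa}, and indeed it is available (it follows the pattern of Proposition~\ref{PA.3} with $I$ replaced by $L$, or can be read off from \cite{YaZhZh} since operators satisfying \eqref{gaussiankernel} satisfy the reinforced off-diagonal estimates needed there), but as written it is a cited-without-reference step, whereas the paper avoids needing any separate molecular synthesis theorem for $\HLp$ by estimating $S_L^\infty$ directly. If you fill in that reference or supply the short annular estimate for $S_L$ applied to a global molecule, the proof is complete.
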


Kempainnen's result (\cite[Theorem~7]{Kem}) appears when $p(x)\equiv 1$, $x\in \mathbb R^n$, in Theorem \ref{local=global}.

We prove Theorem \ref{local=global} by using molecular characterizations of $\hLp$ (Theorem \ref{molchar}) and of $\HLp$ (\cite[Theorem~1.8]{ZhYa}).

The Hermite operator (also called harmonic oscillator) is the Schr\"odinger operator with potential $V(x)=|x|^2$, $x\in \mathbb R^n$, that is, $H=-\Delta+|x|^2$. The spectrum of $H$ in $L^2(\mathbb R^n)$ (in fact, in $L^p(\mathbb R^n)$ for $1<p<\infty$) is $\sigma(H)=\{2k+n:k\in \mathbb N\}$. Hence, Theorem \ref{local=global} applies for $H$.

The twisted Laplacian operator $\mathcal L$ is defined by
\[\mathcal L=-\frac{1}{2} \sum\limits_{j=1}^n ((\partial_{x_j}+i y_j)^2+(\partial{y_j}-i x_j)^2),\quad (x,y)\in \mathbb R^n \times \mathbb R^n.\]
It is a magnetic Schr\"odinger operator with potential zero. The spectrum of $\mathcal L$ is $\sigma(\mathcal L)=\{2k+n: k\in \mathbb N\}$. Spectral projections for $\mathcal L$ have been studied by Koch and Ricci (\cite{KR}), Stempak and Zienkiewicz (\cite{SZi}) and Thangavelu (\cite{Th}). Theorem \ref{local=global} also works for $L=\mathcal L$.

The Hardy space $H^1_{\mathcal L}(\mathbb R^{2n})$ associated with the twisted Laplacian $\mathcal L$ was studied by Mauceri, Piccardello and Ricci (\cite{MPR}). More recently, Hardy spaces $H^p_{\mathcal L}(\mathbb R^{2n})$, for $0<p<1$, were treated in \cite{HuLi} and \cite{HuWa}. The strategy employed in \cite{MPR}, and then in \cite{HuLi} and \cite{HuWa} is ``ad-hoc'' for the operator $\mathcal L$. Our procedure applies to a wider class of operators. The Hardy spaces $H^p_{\mathcal L}(\mathbb R^{2n})$, for $0<p\leq 1$, are defined by using maximal functions given in terms of the so called twisted convolution (see for instance \cite{HuLi}, \cite{HuWa} and \cite{MPR}). They prove that $H^p_{\mathcal L}(\mathbb R^{2n})$ is actually a local Hardy space $h^p_{\mathcal L}(\mathbb R^{2n})$ defined by a local maximal function and closely connected with the classical local Hardy space $h^p(\mathbb R^{2n})$ introduced by Goldberg \cite{Go}. In \cite{MPR}, it is fundamental an atomic description of the space $H^1_{\mathcal L}(\mathbb R^{2n})$ by using atoms that satisfy a twisted null moment property that depends on the center of the ball associated with the atom. For us, it is not clear how to extend this atomic characterization to variable exponent Hardy spaces associated with $\mathcal L$. The obstruction is that the molecular quasi-norm works in a very different way when we have variable exponents. These differences are transparent in the results of H\"ast\"o (\cite{Ha}). On the other hand, global Hardy spaces associated with general magnetic Schr\"odinger operators were investigated in \cite{JiYaYa1} with constant exponents and in \cite{YaYa} in the Musielak-Orlicz setting. The procedure used in those articles depends strongly on the properties of certain partial differential equations that do not hold for general operators.

Another consequence of the molecular characterization in Theorem \ref{local=global} is given in the following result.

\begin{thm}\label{localglobalL+I} Let $p\in C^{\rm log}(\mathbb R^n)$ with $p^+<2$. Then, $\hLp=\HLIp$ algebraically and topologically.
\end{thm}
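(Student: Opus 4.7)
The strategy is to reduce the claim to Theorem~\ref{local=global} applied to the shifted operator $L+I$, and then to identify $\hLp$ with $h^{p(\cdot)}_{L+I}(\mathbb R^n)$ by comparing their molecular descriptions from Theorem~\ref{molchar}. The operator $L+I$ is self-adjoint and non-negative, and its heat kernel $W_t^{L+I}(x,y)=e^{-t}W_t^L(x,y)$ obeys \eqref{gaussiankernel}; moreover $\sigma(L+I)=1+\sigma(L)\subset[1,\infty)$, so $\inf\sigma(L+I)\geq 1>0$. Thus \ref{positive-sa} and \ref{heatkernel} hold for $L+I$, and Theorem~\ref{local=global} applied to $L+I$ yields $h^{p(\cdot)}_{L+I}(\mathbb R^n)=\HLIp$ algebraically and topologically. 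It therefore suffices to prove $\hLp=h^{p(\cdot)}_{L+I}(\mathbb R^n)$.

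I work through the molecular characterization of Theorem~\ref{molchar} applied to each of the two operators. Since an (I)-type $(p(\cdot),2,M,\varepsilon)$-molecule (associated to a ball with $r_B\geq 1$) is specified by $L^2$-size estimates on the annuli $S_i(B)$, a condition that does not involve the underlying operator, (I)-type molecules for $L$ and for $L+I$ coincide. For (II)-type molecules ($r_B<1$), given $m=L^M b$ I invoke the binomial identity
\[
L^M = \bigl((L+I)-I\bigr)^M = \sum_{k=0}^{M}\binom{M}{k}(-1)^{M-k}(L+I)^k
\]
and decompose
\[
m = (L+I)^M b + \sum_{k=0}^{M-1}(-1)^{M-k}\binom{M}{k}(L+I)^k b.
\]
The leading term $(L+I)^M b$ is (up to a multiplicative constant) an (II)-type $(L+I)$-molecule on $B$: expanding $(L+I)^j b=\sum_{l=0}^{j}\binom{j}{l}L^l b$ and using $r_B<1$ (so that $r_B^{2(M-l)}\leq r_B^{2(M-j)}$ for $l\leq j$) gives
\[
\|(L+I)^j b\|_{L^2(S_i(B))}\leq C\, 2^{-i\varepsilon}\, r_B^{2(M-j)}\, |2^i B|^{1/2}\,\|\chi_{2^i B}\|_{p(\cdot)}^{-1},\qquad j=0,\ldots,M.
\]
Each correction $(L+I)^k b$ with $k<M$ carries an additional small factor $r_B^{2(M-k)}$ which, combined with the weight comparison $|B|^{1/2}\|\chi_B\|_{p(\cdot)}^{-1}\leq C\, r_B^{-n(1/p-1/2)}\, |\widetilde B|^{1/2}\|\chi_{\widetilde B}\|_{p(\cdot)}^{-1}$ for the concentric unit ball $\widetilde B=B(x_B,1)$ (valid by log-H\"older continuity of $p$), allows one to treat each correction as a constant multiple of an (I)-type $(L+I)$-molecule on $\widetilde B$, provided $M$ is chosen large enough (as permitted by Theorem~\ref{molchar}). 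The converse inclusion is symmetric, relying instead on the identity $(L+I)^M=\sum_{k=0}^{M}\binom{M}{k}L^k$.

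The main technical obstacle is to transform these pointwise molecular rearrangements into $L^{p(\cdot)}$-control on the coefficient families appearing in $\|\cdot\|_{h^{p(\cdot)}_{L,\textrm{mol},M,\varepsilon}(\mathbb R^n)}$. Each small ball $B_j$ in the original decomposition produces, in addition to its leading coefficient on $B_j$, auxiliary coefficients $\lambda_j\, r_{B_j}^{2(M-k)}$ on $\widetilde B_j=B(x_{B_j},1)$; one must verify
\[
\Bigg\|\Bigg(\sum_{j,k}\Bigl(\frac{\lambda_j\, r_{B_j}^{2(M-k)}\,\chi_{\widetilde B_j}}{\|\chi_{\widetilde B_j}\|_{p(\cdot)}}\Bigr)^{\underline{p}}\Bigg)^{1/\underline{p}}\Bigg\|_{p(\cdot)} \leq C\, \Bigg\|\Bigg(\sum_j\Bigl(\frac{\lambda_j\,\chi_{B_j}}{\|\chi_{B_j}\|_{p(\cdot)}}\Bigr)^{\underline{p}}\Bigg)^{1/\underline{p}}\Bigg\|_{p(\cdot)}.
\]
This is reached by pointwise domination $\chi_{\widetilde B_j}\leq C(\mathcal M\chi_{B_j})^s$ for a suitable $s>0$ and invocation of the vector-valued Fefferman--Stein maximal inequality in $L^{p(\cdot)}(\mathbb R^n)$ (available by \cite{DHHMS} under $p\in C^{\log}(\mathbb R^n)$), with the smallness of $r_{B_j}^{2(M-k)}$ designed to absorb the weight ratio $\|\chi_{B_j}\|_{p(\cdot)}/\|\chi_{\widetilde B_j}\|_{p(\cdot)}\sim r_{B_j}^{n/p}$.
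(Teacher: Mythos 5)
Your high-level reduction is attractive and the first half is sound: since $L+I$ satisfies \ref{positive-sa}, \ref{heatkernel} and $\inf\sigma(L+I)\geq 1$, Theorem~\ref{local=global} applied to $L+I$ does give $h_{L+I}^{p(\cdot)}(\mathbb R^n)=\HLIp$, and the $(I)$-type local molecules for $L$ and for $L+I$ are indeed literally the same objects, since their defining conditions are operator-free $L^2$-size bounds on annuli. This is a genuinely different route from the paper's, which never goes through $h_{L+I}^{p(\cdot)}(\mathbb R^n)$ at all but instead proves the two inclusions $\hLp\leftrightarrow\HLIp$ directly at the square-function level.

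The gap is in the handling of $(II)$-type molecules via the binomial identity $L^M=\bigl((L+I)-I\bigr)^M$. The leading term $(L+I)^Mb$ is fine, but the correction terms $(L+I)^kb$ with $k$ close to $M$ cannot be absorbed in the way you describe. Concretely, for $k=M-1$ the size bound on the original annuli gives, after the $i=0$ step (covering $\widetilde B=B(x_B,1)$ by the annuli $S_l(B)$, $l\lesssim \log(1/r_B)$, and using the comparability lemma),
\[
\|(L+I)^{M-1}b\|_{L^2(\widetilde B)}\leq C\, r_B^{\,2}\,|B|^{1/2}\|\chi_B\|_{p(\cdot)}^{-1}\leq C\, r_B^{\,2-n(1/p^--1/2)}\,|\widetilde B|^{1/2}\|\chi_{\widetilde B}\|_{p(\cdot)}^{-1},
\]
so the best coefficient you can assign to this correction as an $(I)$-type molecule on $\widetilde B$ is $\lambda_j\, r_{B_j}^{\,2-n(1/p^--1/2)}$. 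The subsequent Fefferman--Stein step with the $j$-dependent dilation factor $1/r_{B_j}$ costs an additional $r_{B_j}^{-n(1/s-1/p^+)}$ with $s<\underline p$. Altogether the coefficient bound requires $2\geq n(1/p^--1/2)+n(1/s-1/p^+)$, which fails whenever $p^-$ is small relative to $n$ (already in the constant-exponent case, for $p<\tfrac{2n}{n+4}$: taking a single molecule on a ball of radius $r\to 0$, the replacement coefficient $r^{\,2-n(1/p-1/2)}$ blows up). Increasing $M$ does not help because the exponent $2(M-k)$ for $k=M-1$ stays at $2$. So the binomial rearrangement loses control precisely in the regime where Hardy-space theory is nontrivial.

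The paper avoids any re-expansion of molecules. For $\hLp\subset\HLIp$ it takes a $(p(\cdot),2,M,\varepsilon)_{L,\rm loc}$-molecule $m=L^Mb$ with $r_B<1$, sets $\mathfrak b=L^{M-1}b$ (which is a $(p(\cdot),2,M-1,\varepsilon)_{L,\rm loc}$-molecule on the \emph{same} ball $B$), and uses $m=L\mathfrak b$ together with $(L+I)e^{-t^2(L+I)}=e^{-t^2}\bigl(Le^{-t^2L}+e^{-t^2L}\bigr)$ to get, pointwise in the cone,
\[
t^2(L+I)e^{-t^2(L+I)}m = e^{-t^2}\bigl(t^2Le^{-t^2L}m + t^2Le^{-t^2L}\mathfrak b\bigr),
\]
so that $S_{L+I}^{\rm loc}(m)\leq C\bigl(S_L^{\rm loc}(m)+S_L^{\rm loc}(\mathfrak b)\bigr)$, and both terms are controlled by the already-proved estimate \eqref{objective} (requiring only $2(M-1)>n(2/p^--1/2-1/p^+)$). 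For the converse inclusion it uses the algebraic identity \eqref{D2} and again works directly with square functions. The crucial idea you are missing is this one-step factorization $m=L\mathfrak b$, which produces a second $L$-molecule with the same small ball $B$ rather than a cloud of remainders living on the unit ball $\widetilde B$; that is what makes the coefficient bookkeeping harmless.
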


This result generalizes the one obtained in \cite{JiYaZh} where it is proved that $h^1_L(\mathbb R^n)=H^1_{L+I}(\mathbb R^n)$. Other property of this type was established for $0<p\leq 1$ and inhomogeneous higher order elliptic operators in \cite[Theorem~4.6]{CMY}. 

Since $\sigma(L+I)\subset [1,\infty)$, by combining Theorems \ref{local=global} and \ref{localglobalL+I} we get the following Corollary.

\begin{cor}\label{localL+mI} Let $p\in C^{\rm log}(\mathbb R^n)$ with $p^+<2$. Then, for every $m\in \mathbb N$, $\hLp=H^{p(\cdot)}_{L+mI}(\mathbb R^n)$ algebraically and topologically.
\end{cor}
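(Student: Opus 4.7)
The plan is to argue by induction on $m$, applying Theorems~\ref{local=global} and~\ref{localglobalL+I} not only to $L$ itself but to the shifted operators $L + kI$. The very first thing to check is that for every $k\in\mathbb N_0$ the operator $L + kI$ is admissible: it is densely defined, non-negative and self-adjoint on the same domain as $L$, and since $e^{-t(L+kI)} = e^{-tk} e^{-tL}$ its integral kernel is $e^{-tk} W_t^L(x,y)$, which trivially inherits the Gaussian upper bound \eqref{gaussiankernel}. Hence Theorems~\ref{local=global} and~\ref{localglobalL+I} apply verbatim with $L$ replaced by $L + kI$.

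The base case $m = 1$ is precisely Theorem~\ref{localglobalL+I} applied to $L$, giving $\hLp = H_{L+I}^{p(\cdot)}(\mathbb R^n)$. For the inductive step, assume the identity has been established for some $m \geq 1$ and set $L' := L + mI$. Its spectrum satisfies $\sigma(L') \subset [m,\infty)$, so $\inf \sigma(L') \geq m \geq 1 > 0$. Theorem~\ref{local=global} applied to $L'$ yields $h_{L'}^{p(\cdot)}(\mathbb R^n) = H_{L'}^{p(\cdot)}(\mathbb R^n) = H_{L+mI}^{p(\cdot)}(\mathbb R^n)$, while Theorem~\ref{localglobalL+I} applied to $L'$ yields $h_{L'}^{p(\cdot)}(\mathbb R^n) = H_{L'+I}^{p(\cdot)}(\mathbb R^n) = H_{L+(m+1)I}^{p(\cdot)}(\mathbb R^n)$. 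Chaining these with the inductive hypothesis produces
\[\hLp = H_{L+mI}^{p(\cdot)}(\mathbb R^n) = h_{L'}^{p(\cdot)}(\mathbb R^n) = H_{L+(m+1)I}^{p(\cdot)}(\mathbb R^n),\]
which advances the induction to $m+1$.

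Since both reference theorems already provide algebraic and topological identifications, the chain above inherits the same. The only step that could conceivably cause any trouble is the verification that the shifted operators $L + kI$ satisfy the standing hypotheses \ref{positive-sa} and \ref{heatkernel}, but as observed this is immediate from the factorization of the semigroup, so no genuine obstacle arises.
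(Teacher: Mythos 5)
Your proof is correct and follows the same route the paper sketches in the sentence preceding the corollary: a bootstrap combining Theorem~\ref{local=global} (applicable to the shifted operator since its spectrum is bounded away from zero) with Theorem~\ref{localglobalL+I}. Your explicit verification that $L+kI$ inherits assumptions \ref{positive-sa} and \ref{heatkernel} from the factorization $e^{-t(L+kI)}=e^{-tk}e^{-tL}$ is a worthwhile detail the paper leaves implicit.
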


The remaining of the paper is organized as follows. In Section 2 we recall definitions and main properties of tent and local tent spaces with variable exponents. In Section 3 we establish that $H^{p(\cdot)}_I(\mathbb R^n)$ coincides with certain spaces denoted by $\LpQ$ what allow us to give an atomic and a molecular decomposition of $H^{p(\cdot)}_I(\mathbb R^n)$. The proofs of Theorem \ref{molchar} and Corollary \ref{Lp=Hp} are presented in Section 4. Theorems \ref{local=global} and \ref{localglobalL+I} are proved in Section 5 and 6 respectively.

Throughout this paper, by $C$ and $c$ we shall always denote positive constants that may change from one line to another.

\section{Tent and local tent spaces with variable exponent}
Tent spaces were introduced by Coifman, Meyer and Stein in their celebrated paper \cite{CMS}. The definition and main properties of variable exponent tent spaces can be found in \cite{YaZh1} and \cite{ZhYaLi}.

Let $F$ be a complex-valued measurable function on $\mathbb{R}^{n+1}_+$. We define $T(F)$ as follows
\[T(F)(x)=\left(\int_0^\infty \int_{B(x,t)}|F(y,t)|^2\frac{dydt}{t^{n+1}}\right)^{1/2},\quad x\in \mathbb{R}^n.\]

We say that $F$ is in the tent space $T_2^{p(\cdot )}(\mathbb{R}^n)$ when $T(F)\in L^{p(\cdot )}(\mathbb{R}^n)$. On $T_2^{p(\cdot )}(\mathbb{R}^n)$ we consider the quasi-norm $\|\cdot \|_{T_2^{p(\cdot )}}$ defined by
\[\|F\|_{T_2^{p(\cdot )}(\mathbb{R}^n)}=\|T(F)\|_{p(\cdot )},\quad F\in T_2^{p(\cdot )}(\mathbb{R}^n).
\]
Thus, $T_2^{p(\cdot )}(\mathbb{R}^n)$ is a quasi-Banach space. When $p(x)=r$, $x\in \mathbb{R}^n$, for some $r\in (0,\infty )$, the space $T_2^{p(\cdot )}(\mathbb{R}^n)$ reduces to the tent space $T_2^r(\mathbb{R}^n)$ defined in \cite{CMS}.

Let $q\in (1,\infty )$. A measurable function $A$ in $\mathbb{R}^{n+1}_+$ is said to be a $(T_2^{p(\cdot)},q)$-atom associated to a ball $B=B(x_B,r_B)$ with $x_B\in \mathbb{R}^n$ and $r_B>0$, when

$(i)$ $\mbox{supp }A\subset \widehat{B}$, where $\widehat{B}$ denotes the tent supported on $B$, that is, 
\[\widehat{B}=\{(x,t)\in \mathbb{R}^{n+1}_+: x\in B(x_B,r_B-t),\;0<t<r_B\}.
\]

$(ii)$ $\|A\|_{T^q_2(\mathbb{R}^n)}\leq |B|^{1/q}\|\chi _B\|_{p(\cdot )}^{-1}$.

Furthermore, we say that $A$ is a $(T_2^{p(\cdot)},\infty)$-atom when $A$ is a $(T_2^{p(\cdot)},q)$-atom for every $q\in (1,\infty )$.

In \cite[Theorem 2.16]{ZhYaLi} the following atomic decomposition of the elements of $T_2^{p(\cdot )}(\mathbb{R}^n)$ was established.

\begin{lem}\label{LemmaT1}
Let $p\in C^{\rm log}(\mathbb{R}^n)$. Then, there exists $C>0$ such that, for every $F\in T_2^{p(\cdot )}(\mathbb{R}^n)$ we can find, for each $j\in \mathbb{N}$, $\lambda _j>0$ and a $(T_2^{p(\cdot )},\infty )$-atom $A_j$ associated with the ball $B_j$, such that
\[F(x,t)=\sum_{j\in \mathbb{N}}\lambda _jA_j(x,t),
\]
where the series converges absolutely for almost every $(x,t)\in \mathbb{R}^{n+1}_+$, in $T_2^{p(\cdot )}(\mathbb{R}^n)$ and, when $F\in T_2^2(\mathbb{R}^n)$ also in $T_2^2(\mathbb{R}^n)$, and we have that
\[\left\|\left(\sum\limits_{j\in \mathbb N} \left(\frac{\lambda_j  \chi_{B_j}}{\|\chi_{B_j}\|_{p(\cdot)}}\right)^{\underline{p}}\right)^{1/\underline{p}}\right\|_{p(\cdot)}\leq C\|F\|_{T_2^{p(\cdot )}(\mathbb{R}^n)}.
\]
\end{lem}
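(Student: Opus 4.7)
The plan is to adapt the classical Coifman--Meyer--Stein tent-space atomic decomposition (\cite{CMS}) to the variable exponent setting, following the scheme used in the constant exponent case but carefully accounting for the local behaviour of $\|\chi_B\|_{p(\cdot)}$. First, for each $k\in\mathbb{Z}$ I would consider the super-level set $O_k=\{x\in\mathbb{R}^n : T(F)(x)>2^k\}$. Because $F\in T_2^{p(\cdot)}(\mathbb{R}^n)$, the function $T(F)$ is finite almost everywhere, and each $O_k$ is open (since $T(F)$ is lower semicontinuous). To the (proper) open set $O_k$ I would apply a Whitney-type covering by balls $\{B_{k,j}\}_j$ with bounded overlap, whose radii are comparable to the distance to $O_k^c$ and whose finite dilates still lie in $O_k$.

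Second, I would define candidate atoms by taking a ``layer-cake'' decomposition of $F$: writing $A_{k,j}$ as a suitable multiple of $F \cdot \chi_{\widehat{B_{k,j}}\cap (\widehat{O_k}\setminus \widehat{O_{k+1}})}$, where $\widehat{O_k}$ denotes the tent over $O_k$. Standard Whitney/tent geometry shows that $F=\sum_{k,j}\lambda_{k,j}A_{k,j}$ pointwise a.e.\ on $\mathbb{R}^{n+1}_+$ (and in $T_2^2(\mathbb{R}^n)$ when $F\in T_2^2$), with $\lambda_{k,j}$ chosen so that $A_{k,j}$ is supported in $\widehat{B_{k,j}}$ and satisfies $\|A_{k,j}\|_{T_2^q(\mathbb{R}^n)}\le |B_{k,j}|^{1/q}\|\chi_{B_{k,j}}\|_{p(\cdot)}^{-1}$ for every $q\in(1,\infty)$; the key input here is that on $\widehat{O_k}\setminus \widehat{O_{k+1}}$ one has a pointwise $L^2$ control of $F$ by $2^{k+1}$ via the tent characterization, which yields $\lambda_{k,j}\lesssim 2^k|B_{k,j}|\|\chi_{B_{k,j}}\|_{p(\cdot)}^{-1}$.

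Third, to control the quasi-norm, I would reduce matters to showing that
\[
\Bigl\|\Bigl(\sum_{k,j}\bigl(\lambda_{k,j}\chi_{B_{k,j}}/\|\chi_{B_{k,j}}\|_{p(\cdot)}\bigr)^{\underline{p}}\Bigr)^{1/\underline{p}}\Bigr\|_{p(\cdot)}\lesssim \|T(F)\|_{p(\cdot)}.
\]
Inserting the estimate $\lambda_{k,j}/\|\chi_{B_{k,j}}\|_{p(\cdot)}\lesssim 2^k$ and using the bounded overlap of the $B_{k,j}$'s for fixed $k$ gives a pointwise bound by $C\bigl(\sum_k 2^{k\underline{p}}\chi_{O_k\setminus O_{k+1}}\bigr)^{1/\underline{p}}\lesssim T(F)$, from which the claim follows. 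The quasi-norm convergence of the series $\sum_{k,j}\lambda_{k,j}A_{k,j}$ in $T_2^{p(\cdot)}(\mathbb{R}^n)$ then follows by applying the same estimate to partial sums and using dominated convergence in $L^{p(\cdot)}(\mathbb{R}^n)$.

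The main obstacle is the third step: in the constant exponent case, the bound on $\|\chi_B\|_{p(\cdot)}$ reduces to $|B|^{1/p}$ and the sum telescopes cleanly, but for variable $p(\cdot)$ the normalization $\|\chi_B\|_{p(\cdot)}$ depends on the location and size of $B$ in a subtle way. To handle this I would invoke the log-Hölder continuity hypothesis $p\in C^{\rm log}(\mathbb{R}^n)$, which permits the use of the Diening--H\"ast\"o--Harjulehto--M\v u\v zi\v cka--Svobodov\'a type estimates together with the $L^{p(\cdot)/\underline{p}}$-boundedness of a suitable maximal function (or, equivalently, a Fefferman--Stein vector-valued inequality for $\mathcal{M}$) in order to pass from the pointwise control on $\sum_k 2^{k\underline{p}}\chi_{O_k}$ to the desired $L^{p(\cdot)}$-norm bound. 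This is precisely the technical heart of \cite[Theorem 2.16]{ZhYaLi}.
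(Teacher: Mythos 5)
The paper does not actually prove Lemma~\ref{LemmaT1}; it simply invokes \cite[Theorem~2.16]{ZhYaLi}. Your proposal is therefore not being compared against a proof in the paper but against the argument in the cited reference, and you correctly identify that it is the variable-exponent analogue of the Coifman--Meyer--Stein tent-space decomposition: level sets $O_k=\{T(F)>2^k\}$, a Whitney covering of each $O_k$, a tent/sawtooth truncation of $F$, and then a vector-valued estimate in $L^{p(\cdot)}$ exploiting $p\in C^{\rm log}$. That is the right skeleton.

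There are, however, two concrete issues with the sketch. First, a bookkeeping slip: the normalization $\|A_{k,j}\|_{T_2^q}\le |B_{k,j}|^{1/q}\|\chi_{B_{k,j}}\|_{p(\cdot)}^{-1}$ forces $\lambda_{k,j}\sim 2^k\|\chi_{B_{k,j}}\|_{p(\cdot)}$, not $\lambda_{k,j}\lesssim 2^k|B_{k,j}|\|\chi_{B_{k,j}}\|_{p(\cdot)}^{-1}$; the latter would give $\lambda_{k,j}/\|\chi_{B_{k,j}}\|_{p(\cdot)}\lesssim 2^k|B_{k,j}|\|\chi_{B_{k,j}}\|_{p(\cdot)}^{-2}$ rather than the $\lesssim 2^k$ you later use. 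You quote the correct consequence, so this is a slip rather than a structural error, but as written the displayed estimate does not imply what follows. Second, and more substantively, the lemma asserts a decomposition into $(T_2^{p(\cdot)},\infty)$-atoms, i.e.\ functions $A$ with $\|A\|_{T_2^q}\le |B|^{1/q}\|\chi_B\|_{p(\cdot)}^{-1}$ for \emph{every} $q\in(1,\infty)$. The ``$L^2$ control on the sawtooth'' you describe gives exactly the $q=2$ bound and nothing more: if $A$ is supported in $\widehat{B}$, a bound on $\|T(A)\|_2$ does not control $\|T(A)\|_q$ for $q>2$, and the $\infty$-atom property in particular requires an $L^\infty$ bound on $T(A)$ that cannot be extracted from the $L^2$ estimate. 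In the CMS framework (and in \cite{ZhYaLi}) this is handled by working with a conical maximal functional of $L^\infty$ type (an $\mathcal{A}^\infty$ functional) together with a good-$\lambda$ inequality comparing it to $T=\mathcal{A}^2$, or by an equivalent change-of-aperture argument. Your sketch does not supply this ingredient, so as it stands it proves a decomposition into $(T_2^{p(\cdot)},2)$-atoms but not into $\infty$-atoms. For most of the later applications in this paper $(T_2^{p(\cdot)},2)$-atoms are what is actually used, but the lemma as stated claims more, and the gap should be filled, e.g.\ by invoking the $\mathcal{A}^\infty$/good-$\lambda$ step as in \cite{CMS} and \cite{ZhYaLi}.
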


Non-uniformly local tent spaces with constant exponent have been studied by Amenta and Kemppainen (\cite{AK}). We consider uniformly local tent spaces with variable exponents. 

Let $F$ be a measurable complex-valued function on $\mathbb{R}^n\times (0,1)$. We define $T^{\textrm{loc}}(F)$ by
\[T^{\textrm{loc}}(F)(x)=\left(\int_0^1\int_{B(x,t)}|F(y,t)|^2\frac{dydt}{t^{n+1}}\right)^{1/2},\quad x\in \mathbb{R}^n.
\]
We say that $F$ is in the local tent space $t_2^{p(\cdot )}(\mathbb{R}^n)$ when $T^{\textrm{loc}}(F)\in L^{p(\cdot )}(\mathbb{R}^n)$.

We define 
\[\|F\|_{t_2^{p(\cdot )}(\mathbb{R}^n)}=\|T^{\textrm{loc}}(F)\|_{p(\cdot )},\quad F\in t_2^{p(\cdot )}(\mathbb{R}^n).
\]

The space $t_2^{p(\cdot )}(\mathbb{R}^n)$ endowed with the quasi-norm $\|\cdot \|_{t_2^{p(\cdot )}(\mathbb{R}^n)}$ is a quasi-Banach space. By taking $p(x)=r$, $x\in \mathbb{R}^n$, with $r\in (0,1)$, the space $t_2^{p(\cdot )}(\mathbb{R}^n)$ reduces to the space $t_2^r(\mathbb{R}^n)$ considered in \cite{CMY} and \cite{CMM}.

We define atoms in $t_2^{p(\cdot )}(\mathbb{R}^n)$ as follows. Here we consider only 2-atoms. A measurable function $A$ is a $(t_2^{p(\cdot )},2)$-atom associated with the ball $B$ when $\mbox{supp }A\subset \widehat{B}\cap (\mathbb{R}^n\times (0,1))$ and $\|A\|_{t_2^2(\mathbb{R}^n)}\leq |B|^{1/2}\|\chi _B\|_{p(\cdot )}^{-1}$.

By combining the arguments in \cite[Theorem 3.6]{CMM} and \cite[Theorem 2.16]{ZhYaLi} (see also \cite{Ru}) we can prove the following atomic representation for the elements of $t_2^{p(\cdot )}(\mathbb{R}^n)$.

\begin{lem}\label{LemmaT2}
Let $p\in C^{\rm log}(\mathbb{R}^n)$. For a certain $C>0$ we have that, for every $F\in t_2^{p(\cdot )}(\mathbb{R}^n)$ there exist, for each $j\in \mathbb{N}$, $\lambda _j>0$ and a $(t_2^{p(\cdot )},2)$-atom $A_j$ associated with the ball $B_j$, such that
\[F(x,t)=\sum_{j\in \mathbb{N}}\lambda _jA_j(x,t),\quad (x,t)\in \mathbb{R}^n\times (0,1),
\]
where the series converges absolutely for almost every $(x,t)\in \mathbb{R}^n\times (0,1)$, in $t_2^{p(\cdot )}(\mathbb{R}^n)$ and, in $t_2^2(\mathbb{R}^n)$, provided that $F\in t_2^2(\mathbb{R}^n)$, and also 
\[\left\|\left(\sum\limits_{j\in \mathbb N} \left(\frac{\lambda_j  \chi_{B_j}}{\|\chi_{B_j}\|_{p(\cdot)}}\right)^{\underline{p}}\right)^{1/\underline{p}}\right\|_{p(\cdot)}\leq C\|F\|_{t_2^{p(\cdot )}(\mathbb{R}^n)}.
\]
\end{lem}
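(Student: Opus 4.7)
The plan is to adapt to the local variable-exponent setting the Whitney-covering proof of \cite[Theorem~2.16]{ZhYaLi} (which handles the global variable-exponent tent space $T_2^{p(\cdot)}(\mathbb R^n)$) and to combine it with the localization device used for constant exponents by \cite[Theorem~3.6]{CMM}. After extending any $F\in t_2^{p(\cdot)}(\mathbb R^n)$ by zero outside $\mathbb R^n\times(0,1)$, the truncated square function $T^{\rm loc}(F)$ equals $T(F)$, and the level sets $O_k=\{x\in\mathbb R^n:T^{\rm loc}(F)(x)>2^k\}$, $k\in \mathbb Z$, are open sets of finite measure whose Whitney decomposition yields balls $\{B_{k,j}\}_j$ with bounded overlap and $r_{B_{k,j}}\sim \textrm{dist}(B_{k,j},O_k^c)$.

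First I would define, for each pair $(k,j)$,
\[T_{k,j}:=\widehat{B_{k,j}}\cap\bigl((O_k\setminus O_{k+1})\times(0,\infty)\bigr),\quad A_{k,j}:=F\,\chi_{T_{k,j}},\quad \lambda_{k,j}:=C\,2^k\|\chi_{B_{k,j}}\|_{p(\cdot)},\]
for a suitable absolute constant $C$. Because $F$ vanishes outside the strip $\mathbb R^n\times(0,1)$, each $A_{k,j}$ is automatically supported in $\widehat{B_{k,j}}\cap(\mathbb R^n\times(0,1))$, even when $r_{B_{k,j}}\geq 1$, so that no further truncation of the tent is needed. The required $L^2$ atomic bound $\|A_{k,j}\|_{t_2^2(\mathbb R^n)}\leq \lambda_{k,j}|B_{k,j}|^{1/2}\|\chi_{B_{k,j}}\|_{p(\cdot)}^{-1}$ would then be obtained by the standard $T_2^2$-duality argument of \cite[Theorem~3.6]{CMM}: pair $A_{k,j}$ with a normalized test function on the tent, apply Fubini to convert the $L^2(\frac{dy\,dt}{t})$-mass of $F\chi_{T_{k,j}}$ into an integral of $(T^{\rm loc}(F))^2$ over a slight enlargement $B_{k,j}^*$ of $B_{k,j}$ that is still contained in $O_k$, and invoke the definition of $O_{k+1}$ to insert the factor $2^{2(k+1)}$.

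Next, I would verify that the series $F=\sum_{k,j}\lambda_{k,j}(\lambda_{k,j}^{-1}A_{k,j})$ converges pointwise a.e.\ on $\mathbb R^n\times(0,1)$ (the $T_{k,j}$'s essentially partition $\supp F$), in $t_2^{p(\cdot)}(\mathbb R^n)$, and in $t_2^2(\mathbb R^n)$ when $F\in t_2^2(\mathbb R^n)$ (by dominated convergence based on the uniform atomic $L^2$-bound). The coefficient estimate then follows from the bounded overlap of $\{B_{k,j}\}$ and the chain of pointwise inequalities
\[\left(\sum_{k,j}\Bigl(\frac{\lambda_{k,j}\chi_{B_{k,j}}(x)}{\|\chi_{B_{k,j}}\|_{p(\cdot)}}\Bigr)^{\underline{p}}\right)^{1/\underline{p}}\leq C\left(\sum_{k\in\mathbb Z}2^{k\underline{p}}\chi_{O_k}(x)\right)^{1/\underline{p}}\leq C\,T^{\rm loc}(F)(x),\]
valid for every $x\in\mathbb R^n$ (the second inequality is a routine geometric-series estimate); taking the $L^{p(\cdot)}$ quasi-norm gives the announced bound $C\|F\|_{t_2^{p(\cdot)}(\mathbb R^n)}$.

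The step I expect to be the main obstacle is the verification of the $L^2$ atomic bound via $T_2^2$-duality: one must ensure that the enlarged ball $B_{k,j}^*$ used in the Fubini step is a controlled dilate of $B_{k,j}$, so that both $|B_{k,j}^*|\leq C|B_{k,j}|$ and $\|\chi_{B_{k,j}^*}\|_{p(\cdot)}\leq C\|\chi_{B_{k,j}}\|_{p(\cdot)}$ hold, the latter being precisely the place where the log-H\"older hypothesis $p\in C^{\rm log}(\mathbb R^n)$ enters. Once these quantitative controls are in place and the truncation at $t=1$ has been absorbed into the zero extension of $F$, the rest of the argument is a direct transcription of the global proof.
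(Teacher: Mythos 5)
Your overall plan---extend $F$ by zero to the strip, observe that then $T^{\rm loc}(F)=T(\tilde F)$, run the global variable-exponent level-set/Whitney construction of \cite{ZhYaLi}, and note that the resulting pieces inherit the support of $\tilde F$ and are therefore automatically $(t_2^{p(\cdot)},2)$-atoms---is exactly the combination of \cite[Theorem~3.6]{CMM} and \cite[Theorem~2.16]{ZhYaLi} that the paper intends, and the coefficient estimate at the end of your sketch is the standard one. However, the explicit decomposition you write down, $T_{k,j}=\widehat{B_{k,j}}\cap\bigl((O_k\setminus O_{k+1})\times(0,\infty)\bigr)$, is not the one that makes the argument close, and two steps would concretely fail with it. First, the union of these sets need not cover $\supp\tilde F$: the tents $\widehat{B_{k,j}}$ over Whitney balls shrink as $B_{k,j}$ approaches $\partial O_k$, so $\bigcup_j\widehat{B_{k,j}}$ omits a portion of $\widehat{O_k}$. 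Second---and this is precisely the ``main obstacle'' you flag---knowing only that $y\in O_k\setminus O_{k+1}$ does not suffice for the Fubini step: to dominate $\iint_{T_{k,j}}|F(y,t)|^2\frac{dy\,dt}{t}$ by $2^{2(k+1)}|B_{k,j}|$ one must find, for each $(y,t)\in T_{k,j}$, a \emph{positive proportion} of $B(y,t)$ inside $O_{k+1}^c$, and this can fail when $O_{k+1}$ is dense near $y$.

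Both issues are the reason the CMS-type construction that \cite{ZhYaLi} uses works with dilated level sets $O_k^*=\{x:\,M\chi_{O_k}(x)>1-\gamma\}$ and with the differences of tents: one sets $T_{k,j}=(\widehat{O_k^*}\setminus\widehat{O_{k+1}^*})\cap(B_{k,j}\times(0,\infty))$, where $\{B_{k,j}\}_j$ is a Whitney cover of $O_k$ with bounded overlap. Then each $T_{k,j}$ sits inside the tent of a fixed dilate of $B_{k,j}$, the $T_{k,j}$ exhaust $\supp\tilde F$ up to a null set, and $(y,t)\notin\widehat{O_{k+1}^*}$ yields the $\gamma$-density of $O_{k+1}^c$ in $B(y,t)$ that the Fubini argument needs. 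With this replacement, the remainder of your outline---the $L^2$-atom bound, the pointwise chain giving the coefficient estimate via bounded overlap and a geometric series, and the convergence in $t_2^2(\mathbb{R}^n)$ and $t_2^{p(\cdot)}(\mathbb{R}^n)$---goes through just as you describe, and the localization device (zero extension) is indeed the right way to make the global machinery produce local atoms.
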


\section{The spaces \texorpdfstring{$\LpQ$}{LpQ} and \texorpdfstring{$\HIp$}{HIp}}

A sequence of cubes $\mathcal{Q}=\{Q_j\}_{j\in \mathbb N}$ is said to be a unit cube structure on $\mathbb R^n$ when the following properties are satisfied:
	\begin{enumerate}[label=(\roman*)]
		\item \label{covering}$\cup_{j\in\mathbb{N}}\ Q_j=\mathbb R^n$;
		\item \label{disjointinteriors}$Q_i^\circ \cap Q_j^\circ =\emptyset$, $i,j\in \mathbb N$, $i\neq j$;
		\item \label{unitballs} there exist $0<\delta\leq 1$ and a sequence $\{B_j\}_{j\in\mathbb N}$ of balls in $\mathbb R^n$ with radius $1$ such that $\delta B_j\subset Q_j \subset B_j$, for each $j\in \mathbb{N}$.
	\end{enumerate}
  Here $A^\circ$ denotes the interior of the set $A$. 

Suppose that $\mathcal Q$ is a unit cube structure. The space $\LpQ$ consists of all those measurable functions $f$ on $\mathbb R^n $ for which the series
\[\sum\limits_{j\in\mathbb{N}} |Q_j|^{-1/2} \left\|f\chi_{Q_j}\right\|_2\chi_{Q_j}\]
converges in $L^{p(\cdot)}(\mathbb R^n)$. We define, for $f \in \LpQ$, the quantity
\[\|f\|_{\LpQ}=\left\|\sum\limits_{j\in\mathbb{N}} |Q_j|^{-1/2} \left\|f\chi_{Q_j}\right\|_2\chi_{Q_j}\right\|_{p(\cdot)}.\]
It is clear that $\LpQ\subset L^{2}_{\rm{loc}}(\mathbb R^n)$.

Note that, according to the property \ref{unitballs} of $\mathcal Q$, for every $f\in \LpQ$
\begin{equation}\label{equivnormLpQ}
(\delta ^n\omega_n)^{1/2}\|f\|_{\LpQ}\leq \left\|\sum\limits_{j\in\mathbb{N}}  \left\|f\chi_{Q_j}\right\|_2\chi_{Q_j}\right\|_{p(\cdot)}\leq \omega_n^{1/2}\|f\|_{\LpQ} 
\end{equation}
where $\omega_n=|B(0,1)|$. Thus, a function $f\in \LpQ$ if and only if the series $\sum\limits_{j\in\mathbb{N}}  \left\|f\chi_{Q_j}\right\|_2 \chi_{Q_j}$ converges in $L^{p(\cdot)}(\mathbb R^n)$.

Suppose now that $\alpha:\mathbb N\rightarrow \mathbb N$ is a bijective mapping and $f\in \LpQ$. We define, for every $m\in \mathbb N$, $F_m=\sum\limits_{j=1}^m \left\|f\chi_{Q_{\alpha(j)}}\right\|_2\chi_{Q_{\alpha(j)}}$. It is clear that the sequence $\{F_m\}_{m\in \mathbb N}$ is increasing and verifies $F_m\leq \sum\limits_{j\in\mathbb{N}}  \left\|f\chi_{Q_j}\right\|_2\chi_{Q_j}$. Then, since $p^+<\infty$, according to \cite[Lemma 3.2.8 (c)]{DHHR}, $F_m\rightarrow \sum_{j\in \mathbb{N}} \left\|f\chi_{Q_{\alpha(j)}}\right\|_2\chi_{Q_{\alpha(j)}}$, when $m \rightarrow \infty$ in $L^{p(\cdot)}(\mathbb R^n)$, and \[\sum_{j\in \mathbb{N}} \left\|f\chi_{Q_{\alpha(j)}}\right\|_2\chi_{Q_{\alpha(j)}}\leq \sum_{j\in \mathbb{N}}  \left\|f\chi_{Q_j}\right\|_2\chi_{Q_j}.\]
Similarly, we can see that \[\sum_{j\in \mathbb{N}}\left\|f\chi_{Q_j}\right\|_2\chi_{Q_j}\leq\sum_{j\in \mathbb{N}} \left\|f\chi_{Q_{\alpha(j)}}\right\|_2\chi_{Q_{\alpha(j)}} .\]
Hence, the series $\sum\limits_{j\in \mathbb{N}}\left\|f\chi_{Q_{\alpha(j)}}\right\|_2\chi_{Q_{\alpha(j)}}$ converges unconditionally in $L^{p(\cdot)}(\mathbb R^n)$.
By \cite[Theorem 2.4]{Ha}, for every $f\in \LpQ$, we know that
\[\|f\|_{\LpQ}\sim \left(\sum_{j\in \mathbb{N}} \left(\left\|f\chi_{Q_j}\right\|_2\left\|\chi_{Q_j}\right\|_{p(\cdot)}\right)^{p_\infty}\right)^{1/p_\infty}\]
where $p_\infty$ was defined in \ref{def: LHinfty}.

The space $\LpQ$ does not depend on the unit cube structure on $\mathbb R^n$.

\begin{prop} Let $p\in C^{\rm log}(\mathbb R^n)$ and suppose that $\mathcal Q$ and $\mathcal R$ are two unit cube structures on $\mathbb R^n$. Then, $\LpQ=L^{p(\cdot)}_{\mathcal R}(\mathbb R^n)$ algebraical and topologically.
\end{prop}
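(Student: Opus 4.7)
The plan is to invoke the Hästö-type equivalence recalled just before the proposition. For every unit cube structure $\mathcal S = \{S_j\}_{j\in\mathbb N}$, that identity reads
\[\|f\|_{L^{p(\cdot)}_{\mathcal S}(\mathbb R^n)}^{p_\infty} \sim \sum_{j\in\mathbb N} \left(\|f\chi_{S_j}\|_2 \,\|\chi_{S_j}\|_{p(\cdot)}\right)^{p_\infty},\]
with constants depending only on $n$ and on the log-Hölder data of $p$. The proposition then reduces to comparing the $\mathcal Q$-sum and the $\mathcal R$-sum by a finite-overlap argument.

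First I would show that for every unit cube $S$ from either structure the quantity $\|\chi_S\|_{p(\cdot)}$ is bounded above and below by positive constants depending only on $n$, $p^-$, $p^+$ and $\delta$. This is a direct estimation of the Luxemburg modular: since $|S|\in[\delta^n\omega_n,\omega_n]$ and $p^-\leq p(x)\leq p^+$, one has $\min(\lambda^{-p^-},\lambda^{-p^+})|S|\leq\int_S \lambda^{-p(x)}dx\leq\max(\lambda^{-p^-},\lambda^{-p^+})|S|$, so the level $\lambda$ for which the modular equals $1$ is trapped in a fixed positive bounded interval. Hence the factors $\|\chi_{Q_j}\|_{p(\cdot)}$ and $\|\chi_{R_k}\|_{p(\cdot)}$ in the two sums above are comparable to absolute constants, and can be dropped throughout the comparison.

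Second, I would record a geometric overlap estimate. If $Q_j\cap R_k\neq\emptyset$, then both cubes are contained in a ball of radius $3$ centered at any common point (since each cube sits in some unit ball). Because the cubes within each structure have pairwise disjoint interiors, and each contains a ball of radius $\delta$, a volume-packing argument gives a uniform bound
\[N_0 := \sup_{j}\#\{k:R_k\cap Q_j\neq\emptyset\}\;+\;\sup_{k}\#\{j:R_k\cap Q_j\neq\emptyset\}<\infty,\]
depending only on $n$ and $\delta$.

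Finally, to compare the two sums, I would write $\|f\chi_{Q_j}\|_2^2=\sum_{k\in N_j}\|f\chi_{Q_j\cap R_k}\|_2^2$ with $N_j:=\{k:R_k\cap Q_j\neq\emptyset\}$, raise to the power $p_\infty/2$, and apply the elementary bound $(\sum_{i=1}^m a_i)^\alpha\leq m^{\max(\alpha-1,0)}\sum_i a_i^\alpha$, valid for all $\alpha,a_i>0$. This yields $\|f\chi_{Q_j}\|_2^{p_\infty}\leq C\sum_{k\in N_j}\|f\chi_{R_k}\|_2^{p_\infty}$ with $C=C(p_\infty,N_0)$. Swapping the order of summation and using the overlap bound $N_0$ gives
\[\sum_j\left(\|f\chi_{Q_j}\|_2\|\chi_{Q_j}\|_{p(\cdot)}\right)^{p_\infty}\leq C\,N_0\sum_k\left(\|f\chi_{R_k}\|_2\|\chi_{R_k}\|_{p(\cdot)}\right)^{p_\infty},\]
and the reverse inequality follows by symmetry, proving both the algebraic equality $\LpQ=L^{p(\cdot)}_{\mathcal R}(\mathbb R^n)$ and the equivalence of quasi-norms. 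I do not foresee a serious obstacle; the only point that needs a bit of care is the case $p_\infty<1$, where the power-mean inequality degenerates into sub-additivity, and the uniformity of the constants appearing in the modular estimate for $\|\chi_S\|_{p(\cdot)}$ across all unit cubes of either structure.
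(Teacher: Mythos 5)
Your proposal is correct and follows essentially the same route as the paper: both reduce to the Hästö $\ell^{p_\infty}$ reformulation, invoke a finite-overlap bound between the two unit cube families, and then transfer the sum using a power-mean inequality $(\sum_{i=1}^{m}a_i)^{\alpha}\le m^{\max(\alpha-1,0)}\sum_i a_i^{\alpha}$. The one small simplification in your version is that you bound $\|\chi_{S}\|_{p(\cdot)}$ above and below by absolute constants via a direct modular estimate for unit-scale cubes and then drop these factors entirely, whereas the paper keeps them and compares $\|\chi_{S_j}\|_{p(\cdot)}$ with $\|\chi_{R_i}\|_{p(\cdot)}$ through the nested-ball Luxemburg-norm lemma from Zhuo--Yang--Liang; both are sound, and your observation spares you the citation.
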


\begin{proof}We write $\mathcal Q=\{Q_j\}_{j\in \mathbb N}$ and $\mathcal R=\{R_j\}_{j\in \mathbb N}$. There exists $J_0\in \mathbb N$ such that, for every $j\in \mathbb N$, the set
	\[A_j=\{i\in \mathbb N: R_i\cap Q_j\neq \emptyset\}\]
has at most $J_0$ elements and then we can find a cube $S_j$ in $\mathbb R^n$ such that its side-length is less or equal than $2J_0$ and $\cup_{i\in A_j} R_i\subset S_j$. According to \cite[Lemma~2.6]{ZhYaLi}, we know that there exists $C>0$ such that, for every $j\in \mathbb{N}$,   
\[\frac{\|\chi_{S_j}\|_{p(\cdot)}}{\|\chi_{R_i}\|_{p(\cdot)}}\leq C\left(\frac{|S_j|}{|R_i|}\right)^{1/p^-}\leq C,\quad i\in A_j.\]
Hence, given $f\in L^{p(\cdot)}_{\mathcal R}(\mathbb R^n)$, 
\begin{align*}
\sum\limits_{j\in\mathbb{N}}\left(\left\|f\chi_{Q_j}\right\|_2\left\|\chi_{Q_j}\right\|_{p(\cdot)}\right)^{p_\infty}&\leq \sum\limits_{j\in\mathbb{N}}\left(\left\|f\chi_{\{\cup_{i\in A_j} R_i\}}\right\|_2\left\|\chi_{S_j}\right\|_{p(\cdot)}\right)^{p_\infty}\\ 
&\leq C \sum\limits_{j\in\mathbb{N}}\left(\sum\limits_{i\in A_j}\left\|f\chi_{R_i}\right\|_2\left\|\chi_{R_i}\right\|_{p(\cdot)}\right)^{p_\infty}\\
&\leq C \sum\limits_{i\in\mathbb{N}}\left(\left\|f\chi_{R_i}\right\|_2\left\|\chi_{R_i}\right\|_{p(\cdot)}\right)^{p_\infty},
\end{align*}
where we have used that there exists $J_1\in \mathbb N$ such that for every $i\in \mathbb N$, the set $\{j\in \mathbb N: R_i\cap Q_j\neq \emptyset\}$ has at most $J_1$ elements. Thus, $f\in \LpQ$. By repeating the argument used above, interchanging the roles of $\mathcal Q$ and $\mathcal R$, we obtain the desired result.
\end{proof}

Let $B$ be a ball in $\mathbb R^n$ with radius greater than or equal to $1$. A measurable function $a$ on $\mathbb R^n$ is called a $L^{p(\cdot)}_{\mathcal Q}$-atom associated with $B$ when $a$ is supported on $B$ and $\left\|a\right\|_2\leq |B|^{1/2}\|\chi_B\|_{p(\cdot)}^{-1}$.

Next, we establish a characterization of $\LpQ$ by using atoms.

\begin{thm}\label{atomchar}Assume that $p\in C^{\rm log}(\mathbb R^n)$ with $p^+<2$, $\mathcal Q$ is a unit cube structure on $\mathbb R^n$  and $f$ a measurable function on $\mathbb R^n$. The following assertions are equivalent.

(i) $f\in \LpQ$.

(ii) For every $j\in \mathbb N$, there exist $\lambda_j>0$ and a $L^{p(\cdot)}_{\mathcal Q}$-atom $a_j$ associated with a ball $B_j$ with radius greater than or equal to $1$ such that $f=\sum\limits_{j\in\mathbb{N}} \lambda_j a_j$ in $\LpQ$ and
	\[\left\|\left(\sum\limits_{j\in\mathbb{N}} \left(\frac{\lambda_j \chi_{B_j}}{\|\chi_{B_j}\|_{p(\cdot)}}\right)^{\underline{p}} \right)^{1/\underline{p}}\right\|_{p(\cdot)}<\infty.\]
Moreover, the quantities $\|f\|_{\LpQ}$ and $\|f\|_{L^{p(\cdot)}_{\mathcal Q,at}(\mathbb R^n)}$ are comparable, where
\[\|f\|_{L^{p(\cdot)}_{\mathcal Q,at}(\mathbb R^n)}=\inf\left\|\left(\sum\limits_{j\in\mathbb{N}} \left(\frac{\lambda_j \chi_{B_j}}{\|\chi_{B_j}\|_{p(\cdot)}}\right)^{\underline{p}} \right)^{1/\underline{p}}\right\|_{p(\cdot)}\]
and the infimum is taken over all the sequences $\{(\lambda_j,B_j)\}_{j\in \mathbb N}$ verifying that $\lambda_j>0$ and there exists a $L^{p(\cdot)}_{\mathcal Q}$-atom $a_j$ associated with the ball $B_j$ with radius greater or equal than $1$ such that $f=\sum\limits_{j\in\mathbb{N}} \lambda_j a_j$ in $\LpQ$.
\end{thm}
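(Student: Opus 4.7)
The plan is to handle the two implications separately, organising both around the sublinear discretisation
\[T(g)(x):=\sum_{j\in\mathbb N}|Q_j|^{-1/2}\|g\chi_{Q_j}\|_2\,\chi_{Q_j}(x),\qquad x\in\mathbb R^n,\]
so that $\|g\|_{\LpQ}=\|T(g)\|_{p(\cdot)}$ by definition. The hypothesis $p^+<2$ enters twice: to grant maximal-function bounds on $L^{p(\cdot)/r}(\mathbb R^n)$ for small $r$, and to push the atomic integrability index $q=2$ strictly above $p^+$ so that variable-exponent atomic-sum estimates of Nakai--Sawano type become available.

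\textbf{Direction (ii)$\Rightarrow$(i).} Convergence of the atomic series in $\LpQ$ automatically places $f\in\LpQ$; the work is the norm bound. From $\|f\chi_{Q_k}\|_2\leq\sum_j\lambda_j\|a_j\chi_{Q_k}\|_2$ one gets $T(f)\leq\sum_j\lambda_j T(a_j)$ pointwise. For each atom $a_j$ supported in $B_j$ with $r_{B_j}\geq 1$, the function $T(a_j)$ is supported in $\tilde B_j:=B(x_{B_j},r_{B_j}+2)$, and by essential disjointness of the $Q_k$'s,
\[\|T(a_j)\|_2^2=\sum_k\|a_j\chi_{Q_k}\|_2^2=\|a_j\|_2^2\leq|B_j|\,\|\chi_{B_j}\|_{p(\cdot)}^{-2}.\]
Because $r_{B_j}\geq 1$, the condition \ref{def: LHinfty} together with \cite[Lemma~2.6]{ZhYaLi} give $|\tilde B_j|\sim|B_j|$ and $\|\chi_{\tilde B_j}\|_{p(\cdot)}\sim\|\chi_{B_j}\|_{p(\cdot)}$, so $T(a_j)$ is a uniform constant multiple of an $L^{p(\cdot)}$--$(p(\cdot),2)$-atom on $\tilde B_j$. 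A Nakai--Sawano/Sawano type atomic-sum estimate (applicable since $p^+<2$) then yields
\[\|T(f)\|_{p(\cdot)}\leq C\left\|\left(\sum_{j\in\mathbb N}\left(\frac{\lambda_j\chi_{B_j}}{\|\chi_{B_j}\|_{p(\cdot)}}\right)^{\underline{p}}\right)^{1/\underline{p}}\right\|_{p(\cdot)}.\]

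\textbf{Direction (i)$\Rightarrow$(ii).} For each $j$ with $c_j:=|Q_j|^{-1/2}\|f\chi_{Q_j}\|_2>0$, let $B_j$ be the unit ball provided by property \ref{unitballs} and set
\[\lambda_j:=c_j\|\chi_{B_j}\|_{p(\cdot)},\qquad a_j:=\lambda_j^{-1}f\chi_{Q_j}.\]
Then $\supp a_j\subset Q_j\subset B_j$ and $\|a_j\|_2=|Q_j|^{1/2}\|\chi_{B_j}\|_{p(\cdot)}^{-1}\leq|B_j|^{1/2}\|\chi_{B_j}\|_{p(\cdot)}^{-1}$, so $a_j$ is an $L^{p(\cdot)}_{\mathcal Q}$-atom. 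The identity $\sum_j\lambda_j a_j=\sum_jf\chi_{Q_j}=f$ holds pointwise, and convergence in $\LpQ$ follows from the very definition of $\LpQ$, since the tail $\sum_{j>N}|Q_j|^{-1/2}\|f\chi_{Q_j}\|_2\chi_{Q_j}$ tends to $0$ in $L^{p(\cdot)}(\mathbb R^n)$. For the atomic quasi-norm, fix $r\in(0,p^-)$ and set $g:=T(f)$. Since $g\equiv c_j$ on $Q_j$, averaging $g^r$ over $B(x,2)\supset Q_j$ for any $x\in B_j$ yields $c_j^r\leq C\,\mathcal M(g^r)(x)$. The unit balls $\{B_j\}$ have uniformly finite overlap (bounded by some $N=N(\delta,n)$) because $\{\delta B_j\}$ are disjoint, hence
\[\left(\sum_{j\in\mathbb N}(c_j\chi_{B_j}(x))^{\underline{p}}\right)^{1/\underline{p}}\leq C\max_{j:\,x\in B_j}c_j\leq C\,\mathcal M(g^r)(x)^{1/r}.\]
Since $p(\cdot)/r\in C^{\log}(\mathbb R^n)$ with $(p/r)^->1$, the boundedness of $\mathcal M$ on $L^{p(\cdot)/r}(\mathbb R^n)$ \cite{DHHMS} gives
\[\left\|\left(\sum_{j\in\mathbb N}(c_j\chi_{B_j})^{\underline{p}}\right)^{1/\underline{p}}\right\|_{p(\cdot)}\leq C\|\mathcal M(g^r)\|_{p(\cdot)/r}^{1/r}\leq C\|g\|_{p(\cdot)}=C\|f\|_{\LpQ}.\]

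\textbf{Main obstacle.} The delicate step is the atomic-sum estimate invoked in (ii)$\Rightarrow$(i). Once one knows that $T$ maps $L^{p(\cdot)}_{\mathcal Q}$-atoms to (essentially) $L^{p(\cdot)}$--$(p(\cdot),2)$-atoms, controlling $\|\sum_j\lambda_j T(a_j)\|_{p(\cdot)}$ by the variable-exponent atomic quasi-norm is not a routine $\underline{p}$-triangle inequality; it hinges on the Nakai--Sawano style analysis, which exploits $p^+<q=2$ together with maximal bounds in $L^{p(\cdot)/r}$ for an intermediate $r$. By contrast, the (i)$\Rightarrow$(ii) direction is elementary, reducing cleanly to a single maximal-function estimate via the finite-overlap of the unit balls.
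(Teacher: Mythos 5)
Your proposal is correct and follows essentially the same route as the paper: the same atom construction $a_j=f\chi_{Q_j}/\lambda_j$ for (i)$\Rightarrow$(ii), and for (ii)$\Rightarrow$(i) the same reduction to showing that the discretization $T(a_j)$ (the paper's $b_j$) is a multiple of an atom on a comparable ball, followed by the variable-exponent atomic-sum estimate of Sawano/Yang--Zhuo type. The only cosmetic difference is that in (i)$\Rightarrow$(ii) you carry out the Hardy--Littlewood maximal argument explicitly (finite overlap of the unit balls plus $\mathcal M$ boundedness on $L^{p(\cdot)/r}$), whereas the paper delegates that step to \cite[Remark 3.16]{YaZh1} after replacing $B_j$ by $\delta^{-1}Q_j$ and using the disjointness of the $Q_j$ to collapse the $\ell^{\underline{p}}$-sum to an $\ell^1$-sum; both encode the same estimate.
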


\begin{rem}
The statement $(i)\Rightarrow (ii)$ is true when $p\in C^{\rm log}(\mathbb{R}^n)$ with $p_+<\infty$.
\end{rem}
\begin{proof} 

We write $\mathcal Q=\{Q_j\}_{j\in \mathbb N}$. Since $\mathcal Q$ is a unit cube structure, there exists $0<\delta\leq 1$ such that, for every $j\in \mathbb N$, $\delta B_j \subset Q_j \subset B_j$ for certain ball $B_j$ with radius $1$. 

Suppose first that $f\in \LpQ$. For every $j\in \mathbb N$ for which $\left\|f\chi_{Q_j}\right\|_2\neq 0$, we define 
	\[\lambda_j=\frac{\left\|f\chi_{Q_j}\right\|_2 \|\chi_{B_j}\|_{p(\cdot)}}{|B_j|^{1/2}}\quad \text{and}\quad a_j=\frac{1}{\lambda _j}f\chi_{Q_j},\]
and, in other case, $\lambda_j=0$ and $a_j=0$. 
Clearly, $f=\sum\limits_{j\in \mathbb N} \lambda_j a_j$ a.e. in $\mathbb R^n$.

Let $j_1,j_2\in \mathbb N$, $j_1\leq j_2$. We have that 
\[\sum\limits_{j=j_1}^{j_2} \lambda_j a_j = \sum\limits_{j=j_1}^{j_2} f\chi_{Q_j}\quad \rm{a.e.}\mbox{ in }\mathbb{R}^n,\]
and, by \eqref{equivnormLpQ}, and since $Q_j^\circ \cap Q_k^\circ =\emptyset$, $j,k\in \mathbb N$, $j\neq k$,
\[\left\|\sum\limits_{j=j_1}^{j_2} \lambda_j a_j\right\|_{\LpQ}=\left\|\sum\limits_{j=j_1}^{j_2} f \chi_{Q_j}\right\|_{\LpQ}\leq C\left\|\sum\limits_{j=j_1}^{j_2} \left\|f\chi_{Q_j}\right\|_2 \chi_{Q_j}\right\|_{p(\cdot)}.\]
This means that $\left\{\sum_{j=1}^m \lambda_j a_j\right\}_{m\in \mathbb N}$ is a Cauchy sequence in $\LpQ$ and the completeness of $\LpQ$ implies that $\lim_{m\rightarrow \infty} \sum_{j=1}^m \lambda_j a_j=g$ in the sense of $\LpQ$, for certain $g\in \LpQ$. Then, there exists an increasing function $h:\mathbb N\rightarrow \mathbb N$ such that $\lim_{m\rightarrow \infty} \sum_{j=1}^{h(m)} \lambda_j a_j=g$ a.e. in $\mathbb R^n$. Hence, $f=g$ a.e. in $\mathbb{R}^n$.

It is clear that, for each $j\in \mathbb{N}$, $a_j$ is a $L_{\mathcal{Q}}^{p(\cdot )}$-atom associated to $B_j$.

Also, by \cite[Lemma 2.6]{ZhYaLi} and since $Q_j\subset B_j\subset \delta ^{-1}Q_j$, we have that  $\|\chi _{B_j}\|_{p(\cdot )}\sim\|\chi_{Q_j}\|_{p(\cdot)}\sim \|\chi_{\delta^{-1} Q_j}\|_{p(\cdot)}$, for each $j\in \mathbb N$. Here, if $\alpha >0$ and $Q$ is a cube with sides of length $\ell _Q$, $\alpha Q$ represents the cube with the same center as $Q$ and with sides of length $\alpha \ell _Q$. By using \cite[Remark 3.16]{YaZh1} and that  $Q_j^\circ \cap Q_k^\circ =\emptyset$, $j,k\in \mathbb N$, $j\neq k$, we deduce that
\begin{align*}
\left\|\left(\sum\limits_{j\in \mathbb N} \left(\frac{\lambda_j \chi_{B_j}}{\|\chi_{B_j}\|_{p(\cdot)}}\right)^{\underline{p}}\right)^{1/\underline{p}} \right\|_{p(\cdot)}&\leq C \left\|\left(\sum\limits_{j\in \mathbb N} \left(\frac{\lambda_j \chi_{\delta ^{-1}Q_j}}{\|\chi_{\delta^{-1}Q_j}\|_{p(\cdot)}}\right)^{\underline{p}}\right)^{1/\underline{p}} \right\|_{p(\cdot)}\\
&\hspace{-4cm}\leq  C \left\|\left(\sum\limits_{j\in \mathbb N} \left(\frac{\lambda_j \chi_{Q_j}}{\|\chi_{Q_j}\|_{p(\cdot)}}\right)^{\underline{p}}\right)^{1/\underline{p}} \right\|_{p(\cdot)}\leq C \left\|\sum\limits_{j\in \mathbb N} \frac{\lambda_j \chi_{Q_j}}{\|\chi_{Q_j}\|_{p(\cdot)}}\right\|_{p(\cdot)}\\
&\hspace{-4cm}\leq C\|f\|_{\LpQ}<\infty.
\end{align*}

Conversely, assume that $f=\sum_{j\in\mathbb{N}} \lambda_j a_j$ in $\LpQ$ where, for every $j\in\mathbb N$, $\lambda_j>0$ and $a_j$ is a $L^{p(\cdot)}_{\mathcal Q}$-atom associated with the ball $B_j$ with radius greater or equal than $1$ satisfying that
	\[\left\|\left(\sum\limits_{j\in\mathbb{N}} \left(\frac{\lambda_j \chi_{B_j}}{\|\chi_{B_j}\|_{p(\cdot)}}\right)^{\underline{p}} \right)^{1/\underline{p}}\right\|_{p(\cdot)}<\infty.\]
For every $j\in \mathbb N$, we define the set $J_j=\{i\in \mathbb N: B_j\cap Q_i\neq \emptyset\}$. Then, we can write
\begin{align*}
\|f\|_{\LpQ}& \leq  \left\| \sum\limits_{i\in \mathbb N} \sum\limits_{j\in \mathbb N} \lambda_j \left\|a_j\chi_{Q_i}\right\|_{2} \chi_{Q_i}\right\|_{p(\cdot)}=\left\|  \sum\limits_{j\in \mathbb N} \lambda_j \sum\limits_{i\in J_j} \left\|a_j\chi_{Q_i}\right\|_{2} \chi_{Q_i}\right\|_{p(\cdot)}\\
&=\left\| \sum\limits_{j\in \mathbb N} \lambda_j b_j\right\|_{p(\cdot)}\leq \left\| \left(\sum\limits_{j\in \mathbb N} \left(\lambda_j b_j\right)^{\underline{p}}\right)^{1/\underline{p}}\right\|_{p(\cdot)},
\end{align*}
where $b_j=\sum\limits_{i\in J_j} \left\|a_j\chi_{Q_i}\right\|_{2} \chi_{Q_i}$, $j\in \mathbb N$. 
It is clear that, since the radius of $B_j$ is at least $1$ and $Q_j$ is contained in a ball of radius $1$, then $\supp b_j\subset 3B_j$. Thus,
\begin{equation*}
\|b_j\|_2=\left(\sum\limits_{i\in J_j} \left\|a_j \chi_{Q_i}\right\|_2^2\right)^{1/2}\leq \left\|a_j\right\|_2\leq |B_j|^{1/2}\|\chi_{B_j}\|_{p(\cdot)}^{-1}\leq C|3B_j|^{1/2}\|\chi_{3B_j}\|_{p(\cdot)}^{-1}
\end{equation*}
Moreover, since $p^+<2$, according to \cite[Lemma 4.1]{Sa} and \cite[Remark 3.16]{YaZh1} we have that
\begin{align*}
\left\|\left(\sum\limits_{j\in \mathbb N} \left(\lambda_j b_j\right)^{\underline{p}}\right)^{1/\underline{p}}\right\|_{p(\cdot)}&\leq C\left\|\left(\sum\limits_{j\in \mathbb N} \left(\frac{\lambda_j  \chi_{3B_j}}{\|\chi_{3B_j}\|_{p(\cdot)}}\right)^{\underline{p}}\right)^{1/\underline{p}}\right\|_{p(\cdot)}\\
&\leq C\left\|\left(\sum\limits_{j\in \mathbb N} \left(\frac{\lambda_j  \chi_{B_j}}{\|\chi_{B_j}\|_{p(\cdot)}}\right)^{\underline{p}}\right)^{1/\underline{p}}\right\|_{p(\cdot)}.
\end{align*}
The proof is thus finished.
\end{proof}

\begin{rem}\label{remarkA1}In the proof of Theorem \ref{atomchar} it is established that, if $p\in C^{\rm log}(\mathbb R^n)$ then, for every $f\in \LpQ$, there exist, for every $j\in \mathbb N$, $\lambda_j>0$ and a $L^{p(\cdot)}_{\mathcal Q}$-atom $a_j$ associated with a ball $B_j$ with radius $1$ such that $f=\sum\limits_{j\in\mathbb{N}} \lambda_j a_j$ in $\LpQ$ and
	\[\left\|\left(\sum\limits_{j\in\mathbb{N}} \left(\frac{\lambda_j \chi_{B_j}}{\|\chi_{B_j}\|_{p(\cdot)}}\right)^{\underline{p}} \right)^{1/\underline{p}}\right\|_{p(\cdot)}\sim \|f\|_{\LpQ}.\]
\end{rem}

As a consequence of Theorem \ref{atomchar} we obtain the following result.

\begin{cor}\label{LpQinLp}Assume that $p\in C^{\rm log}(\mathbb R^n)$ with $p^+<2$, and $\mathcal Q$ is a unit cube structure on $\mathbb R^n$. Then, $\LpQ$ is contained in $L^{p(\cdot)}(\mathbb R^n)$.
\end{cor}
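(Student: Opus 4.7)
The plan is to combine the atomic characterisation of $\LpQ$ provided by Theorem \ref{atomchar} with the variable-exponent estimate \cite[Lemma 4.1]{Sa}, which has already been invoked at the end of the proof of Theorem \ref{atomchar} in a completely analogous situation.

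Given $f\in\LpQ$, I would first apply Theorem \ref{atomchar} to obtain a decomposition $f=\sum_{j\in\mathbb{N}}\lambda_j a_j$ converging in $\LpQ$, where each $a_j$ is an $L^{p(\cdot)}_{\mathcal Q}$-atom associated with a ball $B_j$ of radius at least $1$, and the atomic quasi-norm is controlled by $C\|f\|_{\LpQ}$. Because $p^+<2$ and each atom satisfies $\supp a_j\subset B_j$ together with $\|a_j\|_2\leq|B_j|^{1/2}\|\chi_{B_j}\|_{p(\cdot)}^{-1}$, \cite[Lemma 4.1]{Sa} applies and yields
\[\Bigg\|\sum_{j\in\mathbb{N}}\lambda_j a_j\Bigg\|_{p(\cdot)}\leq C\Bigg\|\Bigg(\sum_{j\in\mathbb{N}}\Big(\tfrac{\lambda_j\chi_{B_j}}{\|\chi_{B_j}\|_{p(\cdot)}}\Big)^{\underline p}\Bigg)^{1/\underline p}\Bigg\|_{p(\cdot)}\leq C\|f\|_{\LpQ}.\]
In particular the series converges in $L^{p(\cdot)}(\mathbb{R}^n)$ to some $g$ with $\|g\|_{p(\cdot)}\leq C\|f\|_{\LpQ}$.

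The last step is to identify $g$ with $f$. Since $\LpQ\subset L^2_{\rm loc}(\mathbb{R}^n)$ (noted right after the definition of $\LpQ$), convergence of the partial sums in $\LpQ$ implies convergence in $L^2(Q_j)$ for every $j$, hence a.e.\ convergence along a subsequence; the same subsequence converges a.e.\ to $g$ by the $L^{p(\cdot)}$-convergence, which forces $f=g$ almost everywhere. Consequently $f\in L^{p(\cdot)}(\mathbb{R}^n)$ with $\|f\|_{p(\cdot)}\leq C\|f\|_{\LpQ}$, establishing the desired inclusion. The main (and really only) point of care is the correct quoting of \cite[Lemma 4.1]{Sa}: the assumption $p^+<2$ is essential for that lemma, which is precisely why this corollary inherits that hypothesis from Theorem \ref{atomchar}.
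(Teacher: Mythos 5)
Your argument is correct and follows the same route as the paper: decompose via Theorem~\ref{atomchar}, control the $L^{p(\cdot)}$ quasi-norm of the atomic sum using \cite[Lemma 4.1]{Sa} (the paper cites \cite[Proposition 2.11]{ZhSaYa} for the $\ell^{\underline p}$ domination, but the atom-to-indicator bound comes from the same Sawano lemma), and then identify the $L^{p(\cdot)}$ limit with $f$ by extracting an a.e.\ convergent subsequence. The only cosmetic difference is your slightly more explicit justification of the identification step via $\LpQ\subset L^2_{\rm loc}$, which is fine.
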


\begin{proof}Let $f\in \LpQ$. By virtue of Theorem \ref{atomchar} we can write $f=\sum_{j\in\mathbb{N}} \lambda_j a_j$ in $\LpQ$ where for every $j\in \mathbb N$, $\lambda_j>0$ and $a_j$ is a $L^{p(\cdot)}_{\mathcal Q}$-atom associated with a ball $B_j$ with radius at least $1$ satisfying that 
	\[\left\|\left(\sum\limits_{j\in\mathbb{N}} \left(\frac{\lambda_j \chi_{B_j}}{\|\chi_{B_j}\|_{p(\cdot)}}\right)^{\underline{p}} \right)^{1/\underline{p}}\right\|_{p(\cdot)}<\infty.\]
Let $j_1,j_2\in \mathbb N$, $j_1\leq j_2$. Since $p^+<2$, according to \cite[Proposition 2.11]{ZhSaYa} we deduce that
\[\left\|\sum\limits_{j=j_1}^{j_2} \lambda_j a_j\right\|_{p(\cdot)}\leq \left\|\left(\sum\limits_{j=j_1}^{j_2} (\lambda_j a_j)^{\underline{p}}\right)^{1/\underline{p}}\right\|_{p(\cdot)}\leq C\left\|\left(\sum\limits_{j\in \mathbb N} \left(\frac{\lambda_j  \chi_{B_j}}{\|\chi_{B_j}\|_{p(\cdot)}}\right)^{\underline{p}}\right)^{1/\underline{p}}\right\|_{p(\cdot)}.\] 
Then, the series $\sum_{j\in \mathbb N} \lambda_j a_j$ defines a function $g$ in $L^{p(\cdot)}(\mathbb R^n)$. Moreover, we can find an increasing function $h:\mathbb N\rightarrow \mathbb N$ such that $\lim_{m\rightarrow \infty} \sum\limits_{j=1}^{h(m)} \lambda_j a_j=g$ a.e. in $\mathbb R^n$. Hence, $f=g$ a.e. in $\mathbb{R}^n$, so $f\in L^{p(\cdot)}(\mathbb R^n)$.
\end{proof}

\begin{rem}The inclusion established in Corollary \ref{LpQinLp} is proper. In order to prove this fact, we modify the function considered in \cite[Remark~3.10]{CFJY}. Suppose that  $p\in C^{\rm log}(\mathbb R^n)$ with $p^+<2$, so $p^-<2$. We define
	\[f(x)=|x|^{\sigma -\frac{n}{p^-}}\chi_{B(0,1)(x)},\quad x\in\mathbb R^n,\]
where $n(1/p^--1/p^+)<\sigma<n(1/p^--1/2)$. It is not hard to see that $f\notin L^2_{\rm{loc}}(\mathbb R^n)$. Then, $f\notin \LpQ$. On the other hand, we have that
\[\int_{\mathbb R^n} |f(x)|^{p(x)} dx=\int_{B(0,1)} |x|^{\big(\sigma -\frac{n}{p^-}\big)p(x)} dx\leq \int_{B(0,1)} |x|^{\big(\sigma -\frac{n}{p^-}\big)p^+} dx<\infty.\]
Hence, $f\in L^{p(\cdot)}(\mathbb R^n)$.
\end{rem}

Our next objective is to characterize the Hardy space $\HIp$ associated with the identity operator $I$, by using atoms and molecules (see Propositions \ref{PA.2} and \ref{PA.3}).

Let $M\in \mathbb N _0$ and $\varepsilon>0$. A function $a\in L^2(\mathbb R^n)$ is said to be a $(p(\cdot),2,M)$-atom associated with the ball $B=B(x_B,r_B)$, with $x_B\in \mathbb{R}^n$ and $r_B>0$, when $\supp a \subset B$ and, for every $k\in \{0,1,\dots, M\}$, $\|a\|_{2}\leq r_B^{2k}|B|^{1/2}\|\chi_{B}\|_{p(\cdot)}^{-1}$. The atomic Hardy space $H_{I,\textrm{at},M}^{p(\cdot)}(\mathbb R^n)$ and the quasi-norm $\|\cdot\|_{H_{I,\textrm{at},M}^{p(\cdot)}(\mathbb R^n)}$ are defined in the usual way.

A function $m\in L^2(\mathbb R^n)$ is a $(p(\cdot),2,M,\varepsilon)$-molecule associated with the ball $B=B(x_B,r_B)$, where $x_B\in \mathbb R^n$ and $r_B>0$, when for every $i\in \mathbb N _0$,
\begin{equation}\label{molec}
\|m\|_{L^2(S_i(B))}\leq 2^{-i\varepsilon}r_B^{2k}|2^i B|^{1/2}\|\chi_{2^i B}\|_{p(\cdot)}^{-1},\quad k\in \{0,1,\dots, M\}. 
\end{equation}

The molecular Hardy space $H_{I,\textrm{mol},M,\varepsilon}^{p(\cdot)}(\mathbb R^n)$ and the quasi-norm $\|\cdot\|_{H_{I,\textrm{mol},M,\varepsilon}^{p(\cdot)}(\mathbb R^n)}$ are defined in the usual way.

We observe that for every $M\in \mathbb{N}_0$, if $a$ is a $(p(\cdot ), 2,M)$-atom, then $a$ is also a $(p(\cdot ),2,M,\varepsilon)$-molecule, for each $\varepsilon >0$. Thus, 
$H_{I,\textrm{at},M}^{p(\cdot)}(\mathbb R^n)$ is continuously contained in $H_{I,\textrm{mol},M,\varepsilon}^{p(\cdot)}(\mathbb R^n)$, for every $M\in \mathbb{N}_0$ and $\varepsilon >0$.

Also, if $m$ is a $(p(\cdot ), 2,M,\varepsilon )$-molecule associated to the ball $B=B(x_B,r_B)$, with $x_B\in \mathbb{R}^n$ and $r_B>0$, from \eqref{molec} and by using \cite[Lemma~3.8]{YaZhZh}, we get
\begin{align*}
\|m\|_2^2&=\sum_{i\in \mathbb{N}}\|m\|_{L^2(S_i(B_j))}^2\leq r_B^{4k}\sum_{i\in \mathbb{N}}2^{-2i\varepsilon }|2^iB|\|\chi _{2^iB}\|_{p(\cdot )}^{-2}\nonumber\\
&\leq Cr_B^{4k}|B|\|\chi _{B}\|_{p(\cdot )}^{-2}\sum_{i\in \mathbb{N}}2^{-2i(\varepsilon-n/2+n/p^+)},\quad k\in \{0,1,...,M\}.
\end{align*}
If $\varepsilon >n(1/2-1/p^+)$ we conclude that
\begin{equation}\label{A4}
\|m\|_2\leq Cr_B^{2k}|B|^{1/2}\|\chi _B\|_{p(\cdot )}^{-1},\quad k\in \{0,1,...,M\}.
\end{equation}

The following result will be very useful in the proof of Proposition \ref{PA.2}. Let us consider, for every $M\in \mathbb N$, the operator 
\[\Pi_M(F)(x)=\int_0^\infty t^{2M}e^{-t^2}F(x,t)\frac{dt}{t}, \quad F\in T_2^2(\mathbb R^n).\]

\begin{lem}\label{PiM}Let $M\in \mathbb N$. There exists $C_0>0$ such that if $A$ is a $(T^{p(\cdot)}_2,2)$-atom associated with the ball $B$, then $C_0\Pi_M(A)$ is a $(p(\cdot),2,M)$-atom associated with $B$ (and then, it is a $(p(\cdot ),2,M,\varepsilon)$-molecule associated with $B$, when $\varepsilon>0$). Moreover, $\Pi_M$ defines a bounded operator from $T_2^2(\mathbb R^n)$ into $L^2(\mathbb R^n)$ and it can be extended from $T_2^2(\mathbb R^n)\cap T_2^{p(\cdot)}(\mathbb R^n)$ to $T_2^{p(\cdot)}(\mathbb R^n)$ as a bounded operator from $T_2^{p(\cdot)}(\mathbb R^n)$ into $H_{I,\textrm{at},M}^{p(\cdot)}(\mathbb R^n)$ (and then, into $H_{I,\textrm{mol},M,\varepsilon}^{p(\cdot)}(\mathbb R^n)$, when $\varepsilon>0$) and, when $p^+<2$, into $L^{p(\cdot)}(\mathbb R^n)$. 
\end{lem}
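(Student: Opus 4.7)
The plan is to proceed in two stages: first verify the atomic property for a single tent atom, then use the atomic decomposition of Lemma \ref{LemmaT1} to build the extension.

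Let $A$ be a $(T_2^{p(\cdot)},2)$-atom associated with $B=B(x_B,r_B)$. Since $\supp A\subset \widehat{B}$, for each $t\in(0,r_B)$ the slice $A(\cdot,t)$ is supported in $B(x_B,r_B-t)\subset B$, so $\supp \Pi_M(A)\subset B$. For the size estimate, the Cauchy--Schwarz inequality in $t$ gives
\[|\Pi_M(A)(x)|^2\leq \left(\int_0^{r_B} t^{4M-1}e^{-2t^2}\,dt\right)\left(\int_0^\infty |A(x,t)|^2\,\frac{dt}{t}\right).\]
The key elementary step is that the first factor is bounded by $C\,r_B^{4k}$ for every $k\in \{0,1,\dots,M\}$: when $r_B\leq 1$, drop $e^{-2t^2}\leq 1$ to obtain $r_B^{4M}/(4M)\leq r_B^{4k}/(4M)$; when $r_B>1$, extend the integral to infinity and bound it by the finite constant $C_M=\int_0^\infty t^{4M-1}e^{-2t^2}dt\leq C_M r_B^{4k}$. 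Integrating over $x\in B$, using Fubini to write $\|A\|_{T_2^2}^2\sim \int_0^\infty\!\int_{\mathbb R^n}|A(x,t)|^2\,dx\,dt/t$, and inserting the atom size bound $\|A\|_{T_2^2}\leq |B|^{1/2}\|\chi_B\|_{p(\cdot)}^{-1}$ yield
\[\|\Pi_M(A)\|_2\leq C\,r_B^{2k}|B|^{1/2}\|\chi_B\|_{p(\cdot)}^{-1},\quad k=0,1,\dots,M,\]
so $C_0=1/C$ works: $C_0\Pi_M(A)$ is a $(p(\cdot),2,M)$-atom, hence a $(p(\cdot),2,M,\varepsilon)$-molecule for every $\varepsilon>0$.

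The same computation without truncating the $t$-integral produces $\|\Pi_M(F)\|_2\leq \sqrt{C_M}\,\|F\|_{T_2^2(\mathbb R^n)}$, so $\Pi_M\colon T_2^2(\mathbb R^n)\to L^2(\mathbb R^n)$ is bounded. For the extension, take $F\in T_2^{p(\cdot)}(\mathbb R^n)\cap T_2^2(\mathbb R^n)$ and decompose $F=\sum_j \lambda_jA_j$ via Lemma \ref{LemmaT1}, where by that lemma the series converges simultaneously in $T_2^{p(\cdot)}(\mathbb R^n)$ and in $T_2^2(\mathbb R^n)$. The $L^2$-boundedness just proven gives $\Pi_M(F)=\sum_j \lambda_j\Pi_M(A_j)$ in $L^2(\mathbb R^n)$, while the first stage shows each $C_0\Pi_M(A_j)$ is a $(p(\cdot),2,M)$-atom associated with $B_j$. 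Combining this with the control supplied by Lemma \ref{LemmaT1} produces
\[\|\Pi_M(F)\|_{H_{I,\textrm{at},M}^{p(\cdot)}(\mathbb R^n)}\leq C\left\|\left(\sum_j\left(\frac{\lambda_j\chi_{B_j}}{\|\chi_{B_j}\|_{p(\cdot)}}\right)^{\underline{p}}\right)^{1/\underline{p}}\right\|_{p(\cdot)}\leq C\|F\|_{T_2^{p(\cdot)}(\mathbb R^n)},\]
and a density argument (since $T_2^2\cap T_2^{p(\cdot)}$ is dense in $T_2^{p(\cdot)}$, as witnessed by the atomic decomposition itself) extends $\Pi_M$ to a bounded operator from $T_2^{p(\cdot)}(\mathbb R^n)$ into $H_{I,\textrm{at},M}^{p(\cdot)}(\mathbb R^n)$, and a fortiori into $H_{I,\textrm{mol},M,\varepsilon}^{p(\cdot)}(\mathbb R^n)$.

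For the final mapping into $L^{p(\cdot)}(\mathbb R^n)$ when $p^+<2$, I would argue exactly as in the proof of Corollary \ref{LpQinLp}: applying \cite[Proposition 2.11]{ZhSaYa} to the atomic sum $\sum_j \lambda_j\Pi_M(A_j)$ gives convergence in $L^{p(\cdot)}(\mathbb R^n)$ with norm controlled by the atomic quantity, hence by $\|F\|_{T_2^{p(\cdot)}(\mathbb R^n)}$. The only delicate point I anticipate is the \emph{consistency} of the three definitions of $\Pi_M$ (the original $L^2$-valued one, the $H_{I,\textrm{at},M}^{p(\cdot)}$-valued extension, and the $L^{p(\cdot)}$-valued extension) on the intersection $T_2^2\cap T_2^{p(\cdot)}$; this is resolved uniformly by extracting a common a.e.-convergent subsequence from the atomic series, which is legitimate because Lemma \ref{LemmaT1} guarantees convergence of the decomposition in all the spaces involved.
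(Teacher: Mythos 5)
Your proposal is correct and follows essentially the same route as the paper's proof: Cauchy--Schwarz in $t$ to obtain the $L^2$ bound, the support condition from $\supp A\subset\widehat B$, the atomic decomposition of Lemma \ref{LemmaT1}, and \cite[Proposition 2.11]{ZhSaYa} for the $L^{p(\cdot)}$ bound. The only (trivial) difference is in how the elementary inequality $\int_0^{r_B}t^{4M-1}e^{-2t^2}dt\leq Cr_B^{4k}$ is established: you split on $r_B\lessgtr 1$, while the paper handles $k=M$ by dropping $e^{-2t^2}$ and $k<M$ by pulling out $r_B^{4k}$ and extending to $\infty$.
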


\begin{proof}Suppose that $F\in L^2(\mathbb R^{n+1}_+)$. H\"older's inequality leads to
	\begin{align*}
	\|\Pi_M(F)\|_2^2&= \int_{\mathbb R^n} \left(\int_0^\infty t^{2M}e^{-t^2}|F(y,t)|\frac{dt}{t}\right)^2 dy\\
	&\leq \int_{\mathbb R^n} \int_0^\infty (t^{2M}e^{-t^2})^2 \frac{dt}{t}\int_0^\infty |F(y,t)|^2\frac{dt}{t} dy\\
	&\leq C \int_{\mathbb R^n}  \int_0^\infty \int_{B(y,t)} |F(y,t)|^2dx\frac{dtdy}{t^{n+1}} =C\|F\|_{T^2_2(\mathbb R^n)}^2.
	\end{align*}
Hence, the operator $\Pi_M$ is bounded from $T_2^2(\mathbb R^n)$ into $L^2(\mathbb R^n)$.

Let $A$ be a $(T^{p(\cdot)}_2,2)$-atom associated with the ball $B=B(x_B,r_B)$ where $x_B\in \mathbb R^n$ and $r_B>0$. Since $\supp A \subset \widehat{B}$, we have that $\supp \Pi_M(A)\subset B$ and 
\[\|\Pi_M(A)\|_{L^2(S_i(B))}=\left(\int_{S_i(B)} \left|\int_{0}^{r_B} t^{2M}e^{-t^2} A(x,t) \frac{dt}{t}\right|^2 dx\right)^{1/2}=0, \quad i\in\mathbb N .\]
On the other hand, 
\begin{align*}
\|\Pi_M(A)\|_{L^2(B)}&=\left(\int_B \left|\int_0^{r_B} t^{2M}e^{-t^2} A(x,t) \frac{dt}{t}\right|^2 dx\right)^{1/2}\\
&\leq \left(\int_B \int_0^{r_B} t^{4M-1}e^{-2t^2}dt\int_0^{r_B}|A(x,t)|^2\frac{dtdx}{t}\right)^{1/2}\\
&\leq C \left(\int_0^{r_B}t^{4M-1}e^{-2t^2}dt\right)^{1/2}\|A\|_{T_2^2(\mathbb{R}^n)}.
\end{align*}
Since 
$$
\int_0^{r_B}t^{4M-1}e^{-2t^2}dt\leq \int_0^{r_B}t^{4M-1}dt\leq Cr_B^{4M},
$$
and, for every $k\in \{0,1,\dots, M-1\}$,
$$
\int_0^{r_B}t^{4M-1}e^{-2t^2}dt\leq r_B^{4k}\int_0^\infty t^{4(M-k)-1}e^{-2t^2}dt\leq Cr_B^{4k},
$$
we conclude that, 
$$
\|\Pi _M(A)\|_{L^2(B)}\leq Cr_B^{2k}|B|^{1/2}\|\chi _B\|_{p(\cdot )}^{-1},\quad k\in 
\{0,1,...,M\}.
$$
Hence, there exists $C_0>0$ independent of $A$ such that $C_0 \Pi_M(A)$ is a $(p(\cdot),2,M)$-atom and so, $C_0 \Pi_M(A)$ is a $(p(\cdot),2,M,\varepsilon )$-molecule, when $\varepsilon>0$. 

Let $F$ be in $T_2^{p(\cdot)}(\mathbb R^n)\cap T_2^2(\mathbb R^n)$. By Lemma \ref{LemmaT1} we can write $F=\sum\limits_{j\in \mathbb N} \lambda_j A_j$, in $T_2^2(\mathbb R^n)$ and in $T_2^{p(\cdot)}(\mathbb R^n)$, where for every $j\in \mathbb N$, $\lambda_j>0$ and $A_j$ is a $(T_2^{p(\cdot)},2)$-atom associated with a ball $B_j$ satisfying that 
\begin{equation}\label{A1}
\left\|\left(\sum\limits_{j\in \mathbb N} \left(\frac{\lambda_j  \chi_{B_j}}{\|\chi_{B_j}\|_{p(\cdot)}}\right)^{\underline{p}}\right)^{1/\underline{p}}\right\|_{p(\cdot)}\leq C \|F\|_{T_2^{p(\cdot)}(\mathbb R^n)}.
\end{equation}

Since $\Pi_M$ is bounded from $T_2^2(\mathbb R^n)$ into $L^2(\mathbb R^n)$, we have that 
\[\Pi_M(F)=\sum_{j\in \mathbb{N}}\lambda_j \Pi_M(A_j),\quad \mbox{ in }L^2(\mathbb{R}^n).\]

By taking into account that $C_0\Pi_M(A_j)$ is a $(p(\cdot),2,M)$-atom associated with $B_j$, it is clear from \eqref{A1} that $\|\Pi_M(F)\|_{H_{I,\textrm{at},M}^{p(\cdot)}(\mathbb R^n)}\leq C\|F\|_{T_2^{p(\cdot)}(\mathbb R^n)}$. Also, according to \cite[Proposition 2.11]{ZhSaYa} we deduce that
\begin{align*}
\|\Pi_M(F)\|_{p(\cdot)}&\leq \left\|\left(\sum_{j\in \mathbb{N}} \left(\lambda_j \Pi_M(A_j)\right)^{\underline{p}}\right)^{1/\underline{p}}\right\|_{p(\cdot)}\\
&\leq C \left\|\left(\sum\limits_{j\in \mathbb N} \left(\frac{\lambda_j  \chi_{B_j}}{\|\chi_{B_j}\|_{p(\cdot)}}\right)^{\underline{p}}\right)^{1/\underline{p}}\right\|_{p(\cdot)}\leq C \|F\|_{T_2^{p(\cdot)}(\mathbb R^n)},
\end{align*}
provided that $p^+<2$.
\end{proof}

\begin{prop}\label{PA.2} Let $p\in C^{\rm log}(\mathbb R^n)$, $M\in\mathbb N$ and $\varepsilon>0$. There exists $C>0$ such that, for every $f\in L^2(\mathbb R^n)\cap H_I^{p(\cdot)}(\mathbb R^n)$, there exist, for each $j\in \mathbb N$, $\lambda_j>0$ and a $(p(\cdot),2,M)$-atom $a_j$ associated with the ball $B_j$ such that $f=\sum_{j\in \mathbb N} \lambda_j a_j$ in $L^2(\mathbb R^n)$ and in $H_{I,{\rm{at}}, M}^{p(\cdot)}(\mathbb R^n)$, verifying
	\[\left\|\left(\sum\limits_{j\in \mathbb N} \left(\frac{\lambda_j  \chi_{B_j}}{\|\chi_{B_j}\|_{p(\cdot)}}\right)^{\underline{p}}\right)^{1/\underline{p}}\right\|_{p(\cdot)}\leq C \|f\|_{H_I^{p(\cdot)}(\mathbb R^n)}.\]
So we have that
\[H_I^{p(\cdot)}(\mathbb R^n)\subset H_{I,\textrm{at},M}^{p(\cdot)}(\mathbb R^n)\subset H_{I,\textrm{mol},M,\varepsilon}^{p(\cdot)}(\mathbb R^n).\]
\end{prop}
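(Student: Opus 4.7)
The plan is to combine the tent space decomposition (Lemma \ref{LemmaT1}) with a Calder\'on-type reproducing formula for the semigroup $\{e^{-tI}\}_{t>0}$, and transfer the decomposition from $T_2^{p(\cdot )}(\mathbb{R}^n)$ to $H_I^{p(\cdot )}(\mathbb{R}^n)$ via the operator $\Pi_M$ studied in Lemma \ref{PiM}.

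First, given $f\in L^2(\mathbb{R}^n)\cap H_I^{p(\cdot )}(\mathbb{R}^n)$, I define $F(y,t)=t^2Ie^{-t^2I}f(y)=t^2e^{-t^2}f(y)$ on $\mathbb{R}^{n+1}_+$. Since $T(F)=S_I(f)$, one has $\|F\|_{T_2^{p(\cdot )}(\mathbb{R}^n)}=\|f\|_{H_I^{p(\cdot )}(\mathbb{R}^n)}$; moreover, \eqref{boundedL-Pfunction} with $L=I$ gives $\|F\|_{T_2^2(\mathbb{R}^n)}=\|S_I(f)\|_2\leq C\|f\|_2<\infty$, so $F\in T_2^{p(\cdot )}(\mathbb{R}^n)\cap T_2^2(\mathbb{R}^n)$. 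A direct computation shows
\[
\Pi_M(F)(x)=f(x)\int_0^\infty t^{2M+2}e^{-2t^2}\,\frac{dt}{t}=c_M^{-1}f(x),
\]
for some $c_M>0$, which is the reproducing formula I need: $f=c_M\Pi_M(F)$ pointwise (and in $L^2(\mathbb{R}^n)$).

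Next I apply Lemma \ref{LemmaT1} to write $F=\sum_{j\in\mathbb{N}}\lambda_j A_j$, convergent absolutely a.e., in $T_2^{p(\cdot )}(\mathbb{R}^n)$ and in $T_2^2(\mathbb{R}^n)$, where each $A_j$ is a $(T_2^{p(\cdot )},\infty )$-atom (hence in particular a $(T_2^{p(\cdot )},2)$-atom, since $2\in(1,\infty )$) associated with a ball $B_j$, satisfying
\[
\left\|\left(\sum_{j\in\mathbb{N}}\left(\frac{\lambda_j\chi_{B_j}}{\|\chi_{B_j}\|_{p(\cdot )}}\right)^{\underline{p}}\right)^{1/\underline{p}}\right\|_{p(\cdot )}\leq C\|F\|_{T_2^{p(\cdot )}(\mathbb{R}^n)}=C\|f\|_{H_I^{p(\cdot )}(\mathbb{R}^n)}.
\]
Since $\Pi_M\colon T_2^2(\mathbb{R}^n)\to L^2(\mathbb{R}^n)$ is bounded by Lemma \ref{PiM}, I can apply $\Pi_M$ term by term to obtain
\[
f=c_M\sum_{j\in\mathbb{N}}\lambda_j\Pi_M(A_j)=\sum_{j\in\mathbb{N}}\widetilde{\lambda}_j a_j\quad\text{in }L^2(\mathbb{R}^n),
\]
where $a_j=C_0\Pi_M(A_j)$ is a $(p(\cdot ),2,M)$-atom associated with $B_j$ by Lemma \ref{PiM}, and $\widetilde{\lambda}_j=c_MC_0^{-1}\lambda_j$. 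The desired quasi-norm estimate then follows directly from the tent-space estimate above.

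Finally, I have to upgrade $L^2$-convergence to convergence in $H_{I,\textrm{at},M}^{p(\cdot )}(\mathbb{R}^n)$. By construction the tail $\sum_{j\geq N}\widetilde{\lambda}_j a_j$ has atomic quasi-norm bounded by
\[
\left\|\left(\sum_{j\geq N}\left(\frac{\widetilde{\lambda}_j\chi_{B_j}}{\|\chi_{B_j}\|_{p(\cdot )}}\right)^{\underline{p}}\right)^{1/\underline{p}}\right\|_{p(\cdot )},
\]
which tends to $0$ as $N\to\infty$ by dominated convergence for the Luxemburg norm (admissible since $p^+<\infty$, see \cite[Lemma~3.2.8]{DHHR}). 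This yields $H_I^{p(\cdot )}(\mathbb R^n)\subset H_{I,\textrm{at},M}^{p(\cdot )}(\mathbb R^n)$; the subsequent inclusion $H_{I,\textrm{at},M}^{p(\cdot )}(\mathbb R^n)\subset H_{I,\textrm{mol},M,\varepsilon}^{p(\cdot )}(\mathbb R^n)$ is immediate from the observation already noted in the excerpt that a $(p(\cdot ),2,M)$-atom is a $(p(\cdot ),2,M,\varepsilon )$-molecule. The main technical point is the double convergence $T_2^2\cap T_2^{p(\cdot )}$, which lets me pass the tent-space decomposition through $\Pi_M$ simultaneously in $L^2$ and in the variable-exponent quasi-norm; everything else is a direct application of the results already gathered.
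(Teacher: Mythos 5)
Your proof is correct and follows essentially the same route as the paper: set $F(y,t)=t^2e^{-t^2}f(y)$, decompose it via Lemma~\ref{LemmaT1}, push through $\Pi_M$ using Lemma~\ref{PiM}, and recover $f$ (up to the explicit constant) from the reproducing identity $\Pi_M(F)=c_M^{-1}f$. The only small addition you make over what is written in the paper is the explicit tail argument, via dominated convergence for the Luxemburg norm, showing that the series also converges in $H_{I,\textrm{at},M}^{p(\cdot)}(\mathbb R^n)$; the paper leaves this implicit in the boundedness assertion of Lemma~\ref{PiM}.
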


\begin{proof}\label{prop A.3} Let $f\in L^2(\mathbb R^n)\cap H_I^{p(\cdot)}(\mathbb R^n)$. Then, $F(x,t)=te^{-t^2} f(x)\in T_2^2(\mathbb R^n)\cap T_2^{p(\cdot)}(\mathbb R^n)$. According to Lemma \ref{LemmaT1}, there exist, for every $j\in\mathbb N$, $\lambda_j>0$ and a $(T_2^{p(\cdot)},2)$-atom $A_j$ associated with the ball $B_j$ such that $F(x,t)=\sum_{j\in \mathbb N} \lambda_j A_j(x,t)$ in $T_2^2(\mathbb R^n)$ and in $T_2^{p(\cdot)}(\mathbb R^n)$, and satisfying that 
		\[\left\|\left(\sum\limits_{j\in \mathbb N} \left(\frac{\lambda_j  \chi_{B_j}}{\|\chi_{B_j}\|_{p(\cdot)}}\right)^{\underline{p}}\right)^{1/\underline{p}}\right\|_{p(\cdot)}\leq C \|F\|_{T_2^{p(\cdot)}(\mathbb R^n)}=C\|f\|_{H_I^{p(\cdot)}(\mathbb R^n)}.\]
Then, by Lemma \ref{PiM}, we get
	\[\Pi_M(F)=\sum\limits_{j\in \mathbb N}\lambda_j \Pi_M(A_j),\]
	in $L^2(\mathbb R^n)$ and in $H_{I,\textrm{at},M}^{p(\cdot)}(\mathbb R^n)$, and there exists $C_0>0$ such that, for every $j\in \mathbb N$, $C_0\Pi_M(A_j)$ is a $(p(\cdot),2,M)$-atom associated with $B_j$.
    
    It is clear that 
	\[\Pi_M(F)(x)=\int_0^\infty t^{2M-1}e^{-t^2}F(x,t)dt=\frac{\Gamma (M+1)}{2^{M+2}}f(x),\quad x\in \mathbb{R}^n.
    \]
Then, $f=\sum_{j\in \mathbb{N}}\lambda _ja_j$, where $a_j=2^{M+2}(\Gamma (M+1))^{-1}\Pi_M(A_j)$, and the convergence is in $L^2(\mathbb{R}^n$ and in $H_{I,\rm{at}, M}^{p(\cdot)}(\mathbb R^n)$.
	
	The proof can be finished by taking into account that $\mathbb H$ is the completion of $\mathbb H \cap L^2(\mathbb R^n)$ with respect to the quasi-norm $\|\cdot\|_{\mathbb H}$ when $\mathbb H=H_I^{p(\cdot)}(\mathbb R^n)$ or $\mathbb H=H_{I,\textrm{at},M}^{p(\cdot)}(\mathbb R^n)$.
\end{proof}

For the reverse result we need the following useful lemma. 

\begin{lem}\label{tecnico}
Let $p\in C^{\rm log}(\mathbb R^n)$ with $p^+<2$ and $T$ a bounded operator on $L^2(\mathbb{R}^n)$. Consider $f=\sum_{j\in \mathbb{N}}\lambda _jm_j$ in $L^2(\mathbb{R}^n)$, where, for each $j\in \mathbb{N}$, $\lambda _j>0$ and $m_j\in L^2(\mathbb{R}^n)$, and let $\{B_j\}_{j\in \mathbb{N}}$ be a sequence of balls in $\mathbb{R}^n$ such that
$$
\left\|\left(\sum\limits_{j\in \mathbb N} \left(\frac{\lambda_j  \chi_{B_j}}{\|\chi_{B_j}\|_{p(\cdot)}}\right)^{\underline{p}}\right)^{1/\underline{p}}\right\|_{p(\cdot)}<\infty .
$$
Suppose also that, there exist $C>0$ and $\eta >n(1/p^--1/p^+)$ such that
$$
\|T(m_j)\|_{L^2(S_i(B_j))}\leq C2^{-i\eta}|2^iB_j|^{1/2}\|\chi _{2^iB_j}\|_{p(\cdot )}^{-1},\quad i\in \mathbb{N}_0,j\in \mathbb{N}.
$$
Then, $T(f)\in L^{p(\cdot )}(\mathbb{R}^n)$ and
$$
\|T(f)\|_{p(\cdot )}
\leq C \left\|\left(\sum\limits_{j\in \mathbb N} \left(\frac{\lambda_j  \chi_{B_j}}{\|\chi_{B_j}\|_{p(\cdot)}}\right)^{\underline{p}}\right)^{1/\underline{p}}\right\|_{p(\cdot)}.
$$
\end{lem}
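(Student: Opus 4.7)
The plan is to decompose $T(f)$ along the dyadic annuli $S_i(B_j)$, identify each piece with a (renormalized) $L^2$-atom on $2^iB_j$, and then conclude via a variable-exponent atomic estimate followed by a Fefferman--Stein maximal argument. Since $T$ is bounded on $L^2(\mathbb{R}^n)$ and the series for $f$ converges there,
$$T(f)=\sum_{j\in\mathbb{N}}\sum_{i\in\mathbb{N}_0}\lambda_jT(m_j)\chi_{S_i(B_j)}\quad\text{in }L^2(\mathbb{R}^n).$$

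Fix $\eta'\in(n(1/p^--1/p^+),\eta)$. The hypothesis on $T(m_j)$ ensures that $\widetilde m_{i,j}:=C^{-1}2^{i\eta'}T(m_j)\chi_{S_i(B_j)}$ is supported in $2^iB_j$ and satisfies $\|\widetilde m_{i,j}\|_2\le|2^iB_j|^{1/2}\|\chi_{2^iB_j}\|_{p(\cdot)}^{-1}$, so it is a $(p(\cdot),2)$-atom associated with $2^iB_j$ in the sense of \cite{Sa}. Setting $\widetilde\lambda_{i,j}:=C\,2^{-i\eta'}\lambda_j$, we have $T(f)=\sum_{i,j}\widetilde\lambda_{i,j}\widetilde m_{i,j}$, and the variable-exponent atomic estimate \cite[Lemma~4.1]{Sa} (valid under $p^+<2$; see also \cite[Proposition~2.11]{ZhSaYa}) yields
$$\|T(f)\|_{p(\cdot)}\le C\left\|\left(\sum_{j\in\mathbb{N}}\sum_{i\in\mathbb{N}_0}\left(\frac{\widetilde\lambda_{i,j}\chi_{2^iB_j}}{\|\chi_{2^iB_j}\|_{p(\cdot)}}\right)^{\underline{p}}\right)^{1/\underline{p}}\right\|_{p(\cdot)}.$$

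To sum in $i$, use the pointwise bound $\chi_{2^iB_j}(x)\le C(2^{in}\mathcal{M}(\chi_{B_j})(x))^{1/r}$, valid for every $r>0$, together with the estimate $\|\chi_{2^iB_j}\|_{p(\cdot)}\ge c\,2^{in/p^+}\|\chi_{B_j}\|_{p(\cdot)}$ provided by \cite[Lemma~2.6]{ZhYaLi}. Choose $r\in(0,\underline{p})$ such that $n/r<n/p^+ +\eta'$; this is possible precisely because the hypothesis $\eta>n(1/p^--1/p^+)$ forces $\eta'>n(1/\underline{p}-1/p^+)$. The resulting sum over $i$ is then a convergent geometric series, and the remaining expression is controlled by an $L^{p(\cdot)}$-bound for $(\sum_j(\lambda_j\mathcal{M}(\chi_{B_j})^{1/r}/\|\chi_{B_j}\|_{p(\cdot)})^{\underline{p}})^{1/\underline{p}}$. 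Rewriting this quantity through the identity $\|h^{1/\underline{p}}\|_{p(\cdot)}=\|h\|_{p(\cdot)/\underline{p}}^{1/\underline{p}}$ brings it into the form for which the $\ell^{\underline{p}/r}$-valued Fefferman--Stein maximal inequality of \cite{DHHMS} applies in $L^{p(\cdot)/r}$; the required hypotheses $(p/r)^->1$ and $\underline{p}/r>1$ both hold because $r<\underline{p}\le p^-$. This yields the stated estimate.

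The main obstacle lies in the final step: the parameter $r$ must be simultaneously small enough to activate the vector-valued Fefferman--Stein inequality ($r<\underline{p}$) and large enough for the geometric factor in $i$ to remain summable ($n/r<n/p^++\eta'$). The lower bound on $\eta$ in the hypothesis is exactly what reconciles these two constraints.
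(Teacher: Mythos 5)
Your decomposition of $T(f)$ by dyadic annuli, the renormalization of $T(m_j)\chi_{S_i(B_j)}$ into $(p(\cdot),2)$-atoms on $2^iB_j$, and the use of a variable-exponent atomic estimate are exactly the paper's strategy; where the paper cites \cite[Remark~3.16]{YaZh1} for the dilated-ball comparison, you reprove that step via the pointwise bound $\chi_{2^iB_j}\lesssim (2^{in}\mathcal M\chi_{B_j})^{1/r}$ and the $\ell^{\underline p/r}$-valued Fefferman--Stein inequality, which is a reasonable expansion. The structure is sound, and for $p^-\le 1$ (where $\underline p = p^-$) the exponent bookkeeping closes correctly.

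There is, however, a concrete slip in the final paragraph. You claim that the hypothesis $\eta>n(1/p^--1/p^+)$ ``forces $\eta'>n(1/\underline p-1/p^+)$.'' Since $\underline p=\min\{1,p^-\}\le p^-$, one has $n(1/\underline p - 1/p^+)\ge n(1/p^- - 1/p^+)$, with strict inequality whenever $p^->1$. Thus if $p^->1$ and $\eta$ lies in the gap $\bigl(n(1/p^--1/p^+),\,n(1-1/p^+)\bigr)$, no admissible $\eta'<\eta$ satisfies $\eta'>n(1/\underline p-1/p^+)$, and you cannot find an $r$ with both $r<\underline p$ (for the Fefferman--Stein step) and $n/r<n/p^++\eta'$ (for the geometric sum in $i$). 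The paper avoids this tension by invoking \cite[Remark~3.16]{YaZh1} with the weaker requirement $0<r<p^-$, so it can pick $r$ close to $p^-$ and only needs $\eta>n(1/r-1/p^+)$, which is compatible with the stated hypothesis. So your reconstruction proves the lemma under the stronger condition $\eta>n(1/\underline p-1/p^+)$ rather than the stated $\eta>n(1/p^--1/p^+)$; these coincide when $p^-\le 1$, but differ when $p^->1$. If you want to match the paper's generality you should either quote the dilated-ball estimate in the form used there, or explain how to relax $r<\underline p$ to $r<p^-$ in the Fefferman--Stein step when $\underline p=1$.
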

\begin{proof}
Since $T$ is bounded in $L^2(\mathbb{R}^n)$ we have that $T(f)=\sum_{j\in \mathbb{N}}\lambda _jT(m_j)$ in $L^2(\mathbb{R}^n)$ and we can write
\begin{align*}
\|T(f)\|_{p(\cdot )}&= \left\|\sum_{i\in \mathbb{N}}\sum_{j\in \mathbb{N}}\lambda _jT(m_j)\chi _{S_i(B_j)}\right\|_{p(\cdot )}\\
&\leq \left\|\sum_{i\in \mathbb{N}}\left(\sum_{j\in \mathbb{N}}\lambda _jT(m_j)\chi _{S_i(B_j)}\right)^{\underline{p}}\right\|_{p(\cdot )/\underline{p}}^{1/\underline{p}}\\
&\leq \left(\sum_{i\in \mathbb{N}}\left\|\sum_{j\in \mathbb{N}}\lambda _jT(m_j)\chi _{S_i(B_j)}\right\|_{p(\cdot )}^{\underline{p}}\right)^{1/\underline{p}}\\
&\leq \left(\sum_{i\in \mathbb{N}}\left\|\left(\sum_{j\in \mathbb{N}}(\lambda _jT(m_j)\chi _{S_i(B_j)})^{\underline{p}}\right)^{1/ \underline{p}}\right\|_{p(\cdot )}^{\underline{p}}\right)^{1/\underline{p}}.
\end{align*}

By using \eqref{A2} and according to \cite[Proposition 2.11]{ZhSaYa} and \cite[Remark 3.16]{YaZh1}, since $p^+<2$ we deduce that
\begin{align*}
\left\|\left(\sum_{j\in \mathbb{N}}(\lambda_jT(m_j)\chi _{S_i(B_j)})^{\underline{p}}\right)^{1/\underline{p}}\right\|_{p(\cdot )}&\leq C2^{-i\eta}\left\|\left(\sum\limits_{j\in \mathbb N} \left(\frac{\lambda_j  \chi_{2^iB_j}}{\|\chi_{2^iB_j}\|_{p(\cdot)}}\right)^{\underline{p}}\right)^{1/\underline{p}}\right\|_{p(\cdot)}\\
&\hspace{-3cm} \leq C2^{-i(\eta -n(1/r-1/p^+))}\left\|\left(\sum\limits_{j\in \mathbb N} \left(\frac{\lambda_j  \chi_{B_j}}{\|\chi_{B_j}\|_{p(\cdot)}}\right)^{\underline{p}}\right)^{1/\underline{p}}\right\|_{p(\cdot)},
\end{align*}
where $0<r<p^-$. Since $\eta >n(1/p^--1/p^+)$, by choosing $0<r<p^-$ such that $\eta >n(1/r-1/p^+)$ we conclude that 
\[\|T(f)\|_{p(\cdot )}\leq C\left\|\left(\sum\limits_{j\in \mathbb N} \left(\frac{\lambda_j  \chi_{B_j}}{\|\chi_{B_j}\|_{p(\cdot)}}\right)^{\underline{p}}\right)^{1/\underline{p}}\right\|_{p(\cdot)}<\infty .
\]
\end{proof}

\begin{prop}\label{PA.3}
Let $p\in C^{\rm log}(\mathbb R^n)$ with $p^+<2$. Let $M\in\mathbb N$, such that $2M>n(2/p^--1/2-1/p^+)$ and $\varepsilon> n(1/p^--1/p^+)$. There exists $C>0$ such that if $f=\sum_{j\in \mathbb{N}}\lambda _jm_j$ in $L^2(\mathbb R^n)$, where for every $j\in \mathbb N$, $\lambda_j>0$ and $m_j$ is a $(p(\cdot),2,M,\varepsilon)$-molecule associated with the ball $B_j$ and
	\[\left\|\left(\sum\limits_{j\in \mathbb N} \left(\frac{\lambda_j  \chi_{B_j}}{\|\chi_{B_j}\|_{p(\cdot)}}\right)^{\underline{p}}\right)^{1/\underline{p}}\right\|_{p(\cdot)}<\infty ,\]
then $f\in H_I^{p(\cdot )}(\mathbb{R}^n)$ and 
\[
\|f\|_{H_I^{p(\cdot)}(\mathbb R^n)}\leq C\left\|\left(\sum\limits_{j\in \mathbb N} \left(\frac{\lambda_j  \chi_{B_j}}{\|\chi_{B_j}\|_{p(\cdot)}}\right)^{\underline{p}}\right)^{1/\underline{p}}\right\|_{p(\cdot)}.
\]
So we have that
\[H_{I,\textrm{mol},M,\varepsilon}^{p(\cdot )}(\mathbb{R}^n)\subset H_I^{p(\cdot )}(\mathbb{R}^n),
\]
algebraic and topologically.
\end{prop}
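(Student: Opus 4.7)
The plan is to mimic the proof of Lemma \ref{tecnico} with the sublinear operator $T = S_I$, which is bounded on $L^2(\mathbb R^n)$ by \eqref{boundedL-Pfunction}. Because $f = \sum_{j\in\mathbb N}\lambda_j m_j$ in $L^2(\mathbb R^n)$ and $S_I$ is sublinear and continuous on $L^2$, one has $S_I(f)(x) \leq \sum_{j\in\mathbb N}\lambda_j S_I(m_j)(x)$ for almost every $x\in\mathbb R^n$. The subsequent annular manipulations in the proof of Lemma \ref{tecnico} use only this type of pointwise bound, so the problem reduces to establishing the following local $L^2$-estimate, uniformly in all $(p(\cdot),2,M,\varepsilon)$-molecules $m$ associated with balls $B = B(x_B, r_B)$: for some $\eta > n(1/p^- - 1/p^+)$ and some $C > 0$,
\[
\|S_I(m)\|_{L^2(S_i(B))} \leq C\, 2^{-i\eta}\, |2^iB|^{1/2}\, \|\chi_{2^iB}\|_{p(\cdot)}^{-1},\quad i\in\mathbb N_0.
\]

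To obtain this estimate, first exploit that $e^{-tI}$ is scalar multiplication by $e^{-t}$, so $t^2Ie^{-t^2I}(m)(y) = t^2e^{-t^2}m(y)$, and Fubini gives the convolution-type identity
\[
S_I(g)(x)^2 = \int_{\mathbb R^n}|g(y)|^2\,K(|x-y|)\,dy,\quad K(u):=\int_u^\infty t^{3-n}e^{-2t^2}\,dt,
\]
with $K$ integrable on $\mathbb R^n$ and $\int_{|z|\geq\rho}K(|z|)\,dz \leq Ce^{-c\rho^2}$ for $\rho\geq 1$. Next decompose $m = \sum_{j\in\mathbb N_0}m\chi_{S_j(B)}$ and treat each summand separately. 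In the close regime $|i-j|\leq 2$ one applies the $L^2$-boundedness of $S_I$; in the far regime $|i-j|\geq 3$, one exploits the geometric bound $|x-y|\gtrsim 2^{\max(i,j)}r_B$ for $x\in S_i(B)$ and $y\in S_j(B)$, combined with Fubini and the tail estimate on $K$, to get
\[
\|S_I(m\chi_{S_j(B)})\|_{L^2(S_i(B))} \leq C\,\phi\bigl(2^{\max(i,j)}r_B\bigr)\,\|m\chi_{S_j(B)}\|_2,
\]
with $\phi(\rho) = e^{-c\rho^2/2}$ for $\rho\geq 1$ and $\phi$ merely bounded otherwise.

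The crucial bookkeeping separates the scales where $2^{\max(i,j)}r_B\geq 1$ from those where $2^{\max(i,j)}r_B<1$. In the former case, the kernel decay paired with the $k=0$ molecular bound $\|m\chi_{S_j(B)}\|_2 \leq 2^{-j\varepsilon}|2^jB|^{1/2}\|\chi_{2^jB}\|_{p(\cdot)}^{-1}$ yields super-polynomial decay in $\max(i,j)$. In the latter case, one uses instead the $k=M$ molecular bound, which produces the factor $r_B^{2M}$; weighed against the ratio $|2^iB|^{1/2}\|\chi_{2^iB}\|_{p(\cdot)}^{-1}/\bigl(|2^jB|^{1/2}\|\chi_{2^jB}\|_{p(\cdot)}^{-1}\bigr)$ estimated by powers of $2^{|i-j|}$ via \cite[Lemma~2.6]{ZhYaLi} and \cite[Lemma~3.8]{YaZhZh}, this $r_B^{2M}$ factor supplies the missing decay exactly because of the hypothesis $2M > n(2/p^- - 1/p^+ - 1/2)$. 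Summing the resulting series in $j$ with the aid of $\varepsilon > n(1/p^- - 1/p^+)$ delivers the $2^{-i\eta}$ bound, and the sublinear version of Lemma \ref{tecnico} finishes the proof. The main obstacle is precisely the sub-unit radius regime: the kernel decay of $S_I$ is tuned to scales $t\lesssim 1$ and cannot by itself control molecules living below scale $1$, so the $r_B^{2M}$ factor must be distributed very carefully between the $i$- and $j$-summations, and the hypothesis on $M$ is calibrated so as to leave margin above the threshold $n(1/p^- - 1/p^+)$ required by Lemma \ref{tecnico}.
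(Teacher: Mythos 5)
Your proposal is correct and takes a genuinely different route to the key estimate $\|S_I(m)\|_{L^2(S_i(B))}\lesssim 2^{-i\eta}|2^iB|^{1/2}\|\chi_{2^iB}\|_{p(\cdot)}^{-1}$, though the reduction via Lemma~\ref{tecnico} (applied to the sublinear operator $T=S_I$) is the same. The paper fixes the annulus $S_i(B)$ and splits the \emph{time} integral at $t=2^{i-2}r_B$: on $t<2^{i-2}r_B$ it uses the support observation $B(x,t)\subset 2^{i+1}B\setminus 2^{i-2}B$ together with the $k=0$ annular decay of the molecule, and on $t\geq 2^{i-2}r_B$ it estimates crudely by $\|m\|_2^2\int_{2^{i-2}r_B}^\infty t^3e^{-2t^2}\,dt$ and then selects $k=M$ in \eqref{A4} to absorb the small-radius loss. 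You instead make the convolution structure of $S_I$ explicit, writing $S_I(g)(x)^2=\int|g(y)|^2K(|x-y|)\,dy$ with $K(u)=\int_u^\infty t^{3-n}e^{-2t^2}\,dt$, and then \emph{decompose the molecule} $m=\sum_j m\chi_{S_j(B)}$, pairing the annular decay of the kernel against the annular decay of the molecule across all $(i,j)$. The hypotheses enter at the same points (the $j$-sum requires $\varepsilon>n(1/p^--1/p^+)$, and when $2^{i}r_B\lesssim 1$ the $k=M$ choice supplies $r_B^{2M}\lesssim 2^{-2iM}$ which is exactly what $2M>n(2/p^--1/2-1/p^+)$ needs), so the two methods are morally equivalent; but the paper's time-split keeps the bookkeeping to two terms $I_1,I_2$, whereas your double annular decomposition requires a case analysis in $i>j$ versus $j>i$ and in $2^{\max(i,j)}r_B\geq 1$ versus $<1$, which you gesture at but do not carry out in full. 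What your approach buys is a more structural viewpoint (it isolates the radial kernel $K$ of $S_I^2$ and would transfer verbatim to any area integral with a convolution kernel having Gaussian tails), whereas the paper's estimate is more ad hoc but shorter. One small inaccuracy in your write-up: you present the $r_B^{2M}$ factor as being needed precisely in the sub-unit regime $2^{\max(i,j)}r_B<1$, but it is also needed when $i>j$ and $2^{i}r_B\geq 1$ (the exponential $e^{-c(2^ir_B)^2}$ only yields polynomial decay of the form $(2^ir_B)^{-N}$ which carries a harmful $r_B^{-N}$ unless compensated by $r_B^{2M}$); in the opposite regime $j>i$ the $k=0$ bound together with \cite[Lemma~2.6]{ZhYaLi} already suffices, since there the conversion factor $2^{(j-i)n(1/2-1/p^+)}$ is favourable because $p^+<2$.
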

\begin{proof} 

The operator $I$ does not satisfy a reinforced off-diagonal estimates on balls (see \cite[Assumption 2.4]{YaZhZh}) but we can use the procedure developed in the proof of \cite[Proposition 3.10]{YaZhZh} with some modifications.

Since $S_I$ is bounded from $L^2(\mathbb{R}^n)$ into itself, according to Lemma \ref{tecnico} it is sufficient to show that, there exists $\eta >n(1/p^--1/p^+)$ such that
\begin{equation}\label{A2}
\|S_I(m_j)\|_{L^2(S_i(B_j))}\leq C2^{-i\eta}|2^iB_j|^{1/2}\|\chi _{2^iB_j}\|_{p(\cdot )}^{-1}, 
\end{equation}
for each $i\in \mathbb{N}_0$ and $j\in \mathbb{N}$.

Let $i\in \mathbb{N}_0$ and $j\in \mathbb{N}$. Since $S_I$ is a bounded (sublinear) operator on $L^2(\mathbb{R}^n)$ by \eqref{A4} (for $k=0$), we get
\begin{equation}\label{3.4}
\|S_I(m_j)\|_{L^2(S_i(B_j))}\leq C\|m_j\|_2\leq C|B_j|^{1/2}\|\chi _{B_j}\|_{p(\cdot )}^{-1}.
\end{equation}

Assume now $i\geq 2$ and write
\begin{align*}
\|S_I(m_j)\|_{L^2(S_i(B_j))}^2&=\int_{S_i(B_j)}\left(\int_0^{2^{i-2}r_{B_j}}+\int_{2^{i-2}r_{B_j}}^\infty \right)\int_{B(x,t)}|t^2e^{-t^2}m_j(y)|^2\frac{dydt}{t^{n+1}}dx\\
&=I_1+I_2.
\end{align*}

We observe that if $x\in S_i(B_j)$ and $t\in (0,2^{i-2}r_{B_j})$, then $B(x,t)\subset 2^{i+1}B_j\setminus 2^{i-2}B_j$. Hence, for every $0<t<2^{i-2}r_{B_j}$, 
\begin{align*}
\{(x,y)\in \mathbb{R}^{2n}: x\in S_i(B_j),\;|x-y|<t\}\\
&\hspace{-3cm} \subset \{(x,y)\in \mathbb{R}^{2n}: y\in 2^{i+1}B_j\setminus 2^{i-2}B_j,\;|x-y|<t\},
\end{align*}
and then, 
\begin{align*}
I_1&\leq \int_0^{2^{i-2}r_{B_j}}\int_{2^{i+1}B_j\setminus 2^{i-2}B_j}|t^2e^{-t^2}m_j(y)|^2\int_{B(y,t)}dx\frac{dydt}{t^{n+1}}\\
&=C\int_0^{2^{i-2}r_{B_j}}t^3e^{-2t^2}dt\int_{2^{i+1}B_j\setminus 2^{i-2}B_j}|m_j(y)|^2dy\\
&\leq C\sum_{\ell =-1}^1 \|m_j\|_{L^2(S_{i+\ell}(B_j))}^2\leq C\sum_{\ell =-1}^1 2^{-2(i+\ell )\varepsilon }|2^{i+\ell}B_j|\|\chi _{2^{i+\ell }B_j}\|_{p(\cdot )}^{-2}\\
&\leq C2^{-2i\varepsilon}|2^iB_j|\|\chi _{2^iB_j}\|_{p(\cdot )}^{-2}.
\end{align*}
Hence
\begin{equation}\label{A5}
I_1^{1/2}\leq C2^{-i\varepsilon}|2^iB_j|^{1/2}\|\chi _{2^iB_j}\|_{p(\cdot )}^{-1}.
\end{equation}

On the other hand, by \eqref{A4} and \cite[Lemma~3.8]{YaZhZh} we can write
\begin{align*}
I_2&\leq \int_{2^{i-2}r_{B_j}}^\infty t^{3-n}e^{-2t^2} \int_{\mathbb{R}^n}|m_j(y)|^2\int_{B(y,t)}dxdydt\\
&\leq C\|m_j\|_2^2\int_{2^{i-2}r_{B_j}}^\infty t^3e^{-2t^2}dt\leq Ce^{-c(2^ir_{B_j})^2}r_{B_j}^{4k}|B_j|\|\chi _{B_j}\|_{p(\cdot )}^{-2}\\
&\leq C2^{-2in(1/2-1/p^-)}e^{-c(2^ir_{B_j})^2}r_{B_j}^{4k}|2^iB_j|\|\chi _{2^iB_j}\|_{p(\cdot)}^{-2},
\end{align*}
for each $ k\in \{0,1,...,M\}$.

So, in particular,
\begin{align*}
I_2&\leq C2^{-2in(1/2-1/p^-)}e^{-c(2^ir_{B_j})^2}r_{B_j}^{4M}|2^iB_j|\|\chi _{2^iB_j}\|_{p(\cdot)}^{-2}\\
&\leq C2^{-2i(2M+n(1/2-1/p^-))}|2^iB_j|\|\chi _{2^iB_j}\|_{p(\cdot)}^{-2},
\end{align*}
and then,
\begin{equation}\label{A6}
I_2^{1/2}\leq C2^{-i(2M+n(1/2-1/p^-))}|2^iB_j|^{1/2}\|\chi _{2^iB_j}\|_{p(\cdot)}^{-1}
\end{equation}

By combining \eqref{A5} and \eqref{A6} and taking into account also \eqref{A2} it follows that
\[\|S_I(m_j)\|_{L^2(S_i(B_j))}\leq C2^{-i\min \{\varepsilon, 2M+n(1/2-1/p^-)\}}|2^iB_j|^{1/2}\|\chi _{2^iB_j}\|_{p(\cdot )}^{-1},
\]
for every $i\in \mathbb{N}_0$.

Since $\varepsilon >n(1/p^--1/p^+)$ and $2M>n(2/p^--1/2-1/p^+)$, \eqref{A2} is established and the proof is complete.
\end{proof}

As a consequence of Propositions \ref{PA.2} and \ref{PA.3} we can prove the following result.
\begin{thm}\label{LpQ=HIp}
Let $p\in C^{\rm log}(\mathbb R^n)$ such that $p^+<2$ and $\mathcal{Q}$ a unit cube structure. Then, $L_{\mathcal{Q}}^{p(\cdot )}(\mathbb{R}^n)=H_I^{p(\cdot )}(\mathbb{R}^n)$ algebraic and topologically.
\end{thm}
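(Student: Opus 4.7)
The plan is to identify the intersection $L^2(\mathbb R^n)\cap L_{\mathcal Q}^{p(\cdot )}(\mathbb R^n)$ with $\mathbbl{H}_I^{p(\cdot)}(\mathbb R^n):=L^2(\mathbb R^n)\cap H_I^{p(\cdot)}(\mathbb R^n)$ through equivalent quasi-norms, verify that this common subspace is dense in $L_{\mathcal Q}^{p(\cdot)}(\mathbb R^n)$, and then invoke completeness of $L_{\mathcal Q}^{p(\cdot)}(\mathbb R^n)$ together with the fact that $H_I^{p(\cdot)}(\mathbb R^n)$ is, by construction, the completion of $\mathbbl{H}_I^{p(\cdot)}(\mathbb R^n)$ with respect to $\|\cdot\|_{H_I^{p(\cdot)}}$.

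For the embedding $\mathbbl{H}_I^{p(\cdot)}\hookrightarrow L_{\mathcal Q}^{p(\cdot)}$ I would derive a pointwise lower bound for $S_I(f)$ from local $L^2$-averages. Fix $i\in\mathbb N$ and $x\in Q_i$. Property \ref{unitballs} puts $Q_i$ inside a ball of radius $1$, so $\mathrm{diam}(Q_i)\leq 2$ and therefore $Q_i\subset B(x,t)$ for every $t\in[2,3]$. Restricting the defining integral of $S_I(f)(x)^2$ to this $t$-range gives
\[
S_I(f)(x)^2\geq \int_2^3\!\!\int_{Q_i} t^{3-n} e^{-2t^2}|f(y)|^2\,dy\,dt =c_n\|f\chi_{Q_i}\|_2^2,
\]
with $c_n=\int_2^3 t^{3-n}e^{-2t^2}\,dt>0$. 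Since the $Q_i^\circ$ are pairwise disjoint, summing over $i$ yields $\sum_i\|f\chi_{Q_i}\|_2\chi_{Q_i}(x)\leq c_n^{-1/2}S_I(f)(x)$ a.e., and \eqref{equivnormLpQ} produces $\|f\|_{L_{\mathcal Q}^{p(\cdot)}}\leq C\|f\|_{H_I^{p(\cdot)}}$, so in particular $\mathbbl{H}_I^{p(\cdot)}\subset L^2\cap L_{\mathcal Q}^{p(\cdot)}$.

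For the reverse inclusion on $L^2\cap L_{\mathcal Q}^{p(\cdot)}$ I would use the atomic decomposition constructed in the proof of Theorem \ref{atomchar}: write $f=\sum_j\lambda_j a_j$, where $a_j=f\chi_{Q_j}/\lambda_j$ is an $L_{\mathcal Q}^{p(\cdot)}$-atom supported in the radius-$1$ ball $B_j$ that contains $Q_j$, with control $\|(\sum_j(\lambda_j\chi_{B_j}/\|\chi_{B_j}\|_{p(\cdot)})^{\underline p})^{1/\underline p}\|_{p(\cdot)}\leq C\|f\|_{L_{\mathcal Q}^{p(\cdot)}}$. Because $f\in L^2$ and the $Q_j^\circ$ are disjoint, the partial sums $\sum_{j\leq N}\lambda_j a_j=f\chi_{\bigcup_{j\leq N}Q_j}$ converge to $f$ in $L^2$ (since $\sum_j\|f\chi_{Q_j}\|_2^2=\|f\|_2^2<\infty$). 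As $r_{B_j}=1$, each $a_j$ is automatically a $(p(\cdot),2,M,\varepsilon)$-molecule associated with $B_j$ for arbitrary $M\in\mathbb N$ and $\varepsilon>0$: the support condition forces $\|a_j\|_{L^2(S_i(B_j))}=0$ for $i\geq 1$, while for $i=0$ the molecular inequality for every $k$ reduces to the atomic bound, since $r_{B_j}^{2k}=1$. Fixing $M$ with $2M>n(2/p^--1/2-1/p^+)$ and $\varepsilon>n(1/p^--1/p^+)$, Proposition \ref{PA.3} supplies $\|f\|_{H_I^{p(\cdot)}}\leq C\|f\|_{L_{\mathcal Q}^{p(\cdot)}}$.

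To close the argument, density of $L^2\cap L_{\mathcal Q}^{p(\cdot)}$ in $L_{\mathcal Q}^{p(\cdot)}$ follows by truncation: for $f\in L_{\mathcal Q}^{p(\cdot)}$ set $f_N=f\chi_{\bigcup_{j\leq N}Q_j}$, which is a finite sum of $L^2$-pieces and therefore lies in $L^2\cap L_{\mathcal Q}^{p(\cdot)}$; the tail $\|f-f_N\|_{L_{\mathcal Q}^{p(\cdot)}}=\|\sum_{j>N}|Q_j|^{-1/2}\|f\chi_{Q_j}\|_2\chi_{Q_j}\|_{p(\cdot)}$ tends to $0$ by the $L^{p(\cdot)}$-convergence of the defining series together with \cite[Lemma~3.2.8]{DHHR}. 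Combining the two embeddings with this density and the completeness of $L_{\mathcal Q}^{p(\cdot)}$ identifies $L_{\mathcal Q}^{p(\cdot)}$ with the completion of $\mathbbl{H}_I^{p(\cdot)}$, i.e., with $H_I^{p(\cdot)}$, both algebraically and topologically. The main obstacle in the execution is obtaining the $L^2$-convergence required to invoke Proposition \ref{PA.3}; this is what makes it necessary to work on the dense subspace $L^2\cap L_{\mathcal Q}^{p(\cdot)}$ rather than on all of $L_{\mathcal Q}^{p(\cdot)}$, but the disjointness of the $Q_j^\circ$ turns the issue into a one-line computation.
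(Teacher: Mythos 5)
Your proposal is correct, and for one of the two inclusions it takes a genuinely different, more elementary route than the paper. For $L^2\cap L_{\mathcal Q}^{p(\cdot)}\subset H_I^{p(\cdot)}$ you proceed as the paper does: invoke Remark~\ref{remarkA1} to obtain a radius-$1$ atomic decomposition, observe that $L^{p(\cdot)}_{\mathcal Q}$-atoms on radius-$1$ balls are $(p(\cdot),2,M,\varepsilon)$-molecules for the identity (indeed the $i\geq 1$ annuli contribute nothing, and for $i=0$ with $r_{B_j}=1$ the molecular bound for all $k\in\{0,\dots,M\}$ is just the atomic bound), and then apply Proposition~\ref{PA.3}. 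For the reverse inclusion $L^2\cap H_I^{p(\cdot)}\subset L_{\mathcal Q}^{p(\cdot)}$, however, you diverge sharply: rather than following the paper's route of applying Proposition~\ref{PA.2} to get a $(p(\cdot),2,M)$-atomic decomposition and then proving that such atoms (after absorbing small-ball atoms into radius-$1$ balls, which uses the condition $2M>n(1/p^--1/2)$ and \cite[Lemma~3.8]{YaZhZh}) qualify as $L^{p(\cdot)}_{\mathcal Q}$-atoms to feed into Theorem~\ref{atomchar}, you give a direct pointwise lower bound $S_I(f)(x)\geq c_n^{1/2}\|f\chi_{Q_i}\|_2$ for a.e.\ $x\in Q_i$ by restricting the square-function integral to $t\in(2,3)$ (where $Q_i\subset B(x,t)$ since $\mathrm{diam}\,Q_i\leq 2$), and then read off $\|f\|_{L_{\mathcal Q}^{p(\cdot)}}\leq C\|S_I(f)\|_{p(\cdot)}$ from \eqref{equivnormLpQ} and \cite[Lemma~3.2.8(c)]{DHHR}. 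This is simpler and cleaner than the paper's argument for this direction: it avoids both Proposition~\ref{PA.2} and the rescaling of small-ball atoms, and makes no use of the parameter $M$. The density of $L^2\cap L_{\mathcal Q}^{p(\cdot)}$ in $L_{\mathcal Q}^{p(\cdot)}$ via the truncations $f_N=f\chi_{\bigcup_{j\leq N}Q_j}$, which the paper asserts but does not prove, is also handled correctly; all the pieces fit.
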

\begin{proof}
Note firstly that if $a$ is a $L_\mathcal{Q}^{p(\cdot )}$-atom associated with a ball $B$ with radius greater than or equal to 1, then $a$ is also a $(p(\cdot ),2,M)$-atom, and then a $(p(\cdot ), 2, M,\varepsilon )$-molecule) associated with $B$, for every $M\in \mathbb{N}_0$ and $\varepsilon >0$. 

Let $f\in L_\mathcal{Q}^{p(\cdot )}(\mathbb{R}^n)\cap L^2(\mathbb{R}^n)$. According to Remark \ref{remarkA1} we can write $f=\sum_{j\in \mathbb{N}}\lambda _ja_j$ in $L_\mathcal{Q}^{p(\cdot )}(\mathbb{R}^n)$, where, for every $j\in \mathbb{N}$, $\lambda _j>0$ and $a_j$ is a $L_\mathcal{Q}^{p(\cdot )}$-atom associated with the ball $B_j$ having radius equal to 1, and satisfying that
\[\left\|\left(\sum\limits_{j\in \mathbb N} \left(\frac{\lambda_j  \chi_{B_j}}{\|\chi_{B_j}\|_{p(\cdot)}}\right)^{\underline{p}}\right)^{1/\underline{p}}\right\|_{p(\cdot)}\leq C\|f\|_{L_\mathcal{Q}^{p(\cdot )}(\mathbb{R}^n)}.
\]
Here $C$ does not depend on $f$.

Since $f\in L^2(\mathbb{R}^n)=L_\mathcal{Q}^2(\mathbb{R}^n)$, $f=\sum_{j\in \mathbb{N}}\lambda _ja_j$ also in $L^2(\mathbb{R}^n)$. Then, according to Proposition \ref{PA.3}, we obtain that $f\in H_I^{p(\cdot )}(\mathbb{R}^n)$ and $\|f\|_{H_I^{p(\cdot )}(\mathbb{R}^n)}\leq C\|f\|_{L_\mathcal{Q}^{p(\cdot )}(\mathbb{R}^n)}$.

Assume now that $f\in L^2(\mathbb{R}^n)\cap H_I^{p(\cdot )}(\mathbb{R}^n)$. We choose $M\in \mathbb{N}_0$ such that $2M>n(1/p^--1/2)$. By Proposition \ref{PA.2} we have that $f=\sum_{j\in \mathbb{N}}\lambda _ja_j$ in $L^2(\mathbb{R}^n)$ and in $H_{I,{\rm at}, M}^{p(\cdot )}(\mathbb{R}^n)$, where, for every $j\in \mathbb{N}$, $\lambda _j>0$ and $a_j$ is a $(p(\cdot ), 2,M)$-atom associated with the ball $B_j$, satisfying that
\[\left\|\left(\sum\limits_{j\in \mathbb N} \left(\frac{\lambda_j  \chi_{B_j}}{\|\chi_{B_j}\|_{p(\cdot)}}\right)^{\underline{p}}\right)^{1/\underline{p}}\right\|_{p(\cdot)}\leq C\|f\|_{H_I^{p(\cdot )}(\mathbb{R}^n)}.
\]
Here $C>0$ does not depend on $f$.

If the radius $r_{B_j}$ of $B_j$ is greater than or equal to 1, then $a_j$ is also a $L_\mathcal{Q}^{p(\cdot )}$-atom associated with $B_j$. Suppose now that $a$ is a $(p(\cdot ), 2,M)$-atom associated with a ball $B=B(x_B,r_B)$ with $x_B\in \mathbb{R}^n$ and $0<r_B<1$. Our objective is to see that there exists $C_0>0$ such that $C_0a$ is a $L_\mathcal{Q}^{p(\cdot )}$-atom associated with $B(x_B,1)$.

According to \cite[Lemma~3.8]{YaZhZh} we have that
\begin{align*}
\|a\|_2&\leq r_B^{2M}|B|^{1/2}\|\chi _B\|_{p(\cdot )}^{-1}\leq Cr_B^{2M+n/2-n/p^-}|B(x_B,1)|^{1/2}\|\chi _{B(x_B,1)}\|_{p(\cdot )}^{-1}\\
&\leq C|B(x_B,1)|^{1/2}\|\chi _{B(x_B,1)}\|_{p(\cdot )}^{-1}.
\end{align*}

Hence, $C_0a$ is a $L_\mathcal{Q}^{p(\cdot )}(\mathbb{R}^n)$-atom associated with $B(x_B,1)$ for a certain $C_0>0$ which does not depend on $a$.

We deduce from Theorem \ref{atomchar} that $f=\sum_{j\in \mathbb{N}}\lambda _ja_j$ in $L_\mathcal{Q}^{p(\cdot )}(\mathbb{R}^n)$ and 
\[\|f\|_{L_\mathcal{Q}^{p(\cdot )}(\mathbb{R}^n)}\leq C\|f\|_{H_I^{p(\cdot )}(\mathbb{R}^n)}.
\]
Since $L^2(\mathbb{R}^n)\cap \mathbb{H}$ is dense in $\mathbb{H}$, whenever $\mathbb{H}=L_\mathcal{Q}^{p(\cdot )}(\mathbb{R}^n)$ or $\mathbb{H}=H_I^{p(\cdot )}(\mathbb{R}^n)$, the proof is finished.
\end{proof}

\section{Proof of Theorem \ref{molchar} and Corollary \ref{Lp=Hp}}

\subsection{Proof of Theorem \ref{molchar} (i)} 

Let $\varepsilon>n(1/p^--1/p^+)$ and $M\in \mathbb{N}_0$ such that $2M>n(2/p^--1/2-1/p^+)$. Suppose that $f\in L^2(\mathbb{R}^n)\cap h_{L,\textrm{mol},M,\varepsilon}^{p(\cdot )}(\mathbb{R}^n)$. We can write $f=\sum_{j\in \mathbb N} \lambda_jm_j$ in $L^2(\mathbb{R}^n)$, where, for every $j\in \mathbb{N}$, $\lambda _j>0$ and $m_j$ is a $(p(\cdot ),2,M,\varepsilon)_{L,\textrm{loc}}$-molecule associated with the ball $B_j$, satisfying that
\[\left\|\left(\sum\limits_{j\in \mathbb N} \left(\frac{\lambda_j  \chi_{B_j}}{\|\chi_{B_j}\|_{p(\cdot)}}\right)^{\underline{p}}\right)^{1/\underline{p}}\right\|_{p(\cdot)}<\infty .
\]
In order to see that $f\in h_L^{p(\cdot )}(\mathbb{R}^n)$ we have to prove that $S_L^{\textrm{loc}}(f)$ and $S_I(e^{-L}f)$ belongs to $L^{p(\cdot )}(\mathbb{R}^n)$.

We know that the local square function $S_L^{\textrm{loc}}$, $S_I$ and $e^{-L}$ are bounded from $L^2(\mathbb{R}^n)$ into itself. So, according to Lemma \ref{tecnico} the proof will be finished when we show that, there exist $C>0$ and $\eta >n(1/p^--1/p^+)$ such that
\begin{equation}\label{objective}
\|S_L^{\rm loc}(m_j)\|_{L^2(S_i(B_j))}+\|S_I(e^{-L}(m_j)\|_{L^2(S_i(B_j))}\leq C2^{-i\eta }|2^iB_j|^{1/2}\|\chi _{2^iB_j}\|_{p(\cdot )}^{-1}, 
\end{equation}
for each $i\in \mathbb{N}_0$ and $j\in \mathbb{N}$.

Let $m$ be a $(p(\cdot ), 2,M,\varepsilon)_{L,{\rm loc}}$-molecule associated to the ball $B=B(x_B,r_B)$, with $x_B\in \mathbb{R}^n$ and $r_B>0$.

By proceeding as in \eqref{A4} we can see that
\begin{equation}\label{m}
\|m\|_2\leq C|B|^{1/2}\|\chi _B\|_{p(\cdot )}^{-1},
\end{equation}
because $\varepsilon >n(1/2-1/p^+)$. Then, by the $L^2$-boundedness of $S_L^{\rm loc}$, $S_I$ and $e^{-L}$ we can write, for every $i\in \mathbb{N}_0$,
\begin{equation}\label{gen}
\|S_L^{\rm loc}(m)\|_{L^2(S_i(B))}+\|S_I(e^{-L}(m))\|_{L^2(S_i(B))}\leq C\|m\|_2\leq C|B|^{1/2}\|\chi _B\|_{p(\cdot )}^{-1}.
\end{equation}

Assume now that $r_B\geq 1$, that is, $m$ is a $(p(\cdot ), 2,M,\varepsilon)_{L,{\rm loc}}$-molecule of (I)-type. 
Let $i\in \mathbb{N}$, $i\geq 3$. We decompose $m=m_1+m_2$, where $m_1=m\chi _{2^{i+2}B\setminus 2^{i-3}B}$.

The kernel $k_t$ of the integral operator $Le^{-t^2L}$ is given by
\[k_t(x,y)=-\frac{\partial }{\partial s}W_s^L(x,y)_{\big|s=t^2},\quad x,y\in \mathbb{R}^n,
\]
where $W_s^L$, $s>0$, is the kernel of the integral operator $e^{-sL}$. From (\ref{gaussianderivatives}) we deduce that
\begin{equation}\label{B4}
|k_t(x,y)|\leq C\frac{e^{-c|x-y|^2/t^2}}{t^{n+2}},\quad x,y\in \mathbb{R}^n\mbox{ and } t>0.
\end{equation}

We can write 
\begin{align*}
\|S_L^{\rm loc}(m)\|^2_{L^2(S_i(B))}&\leq C\Big(\|S_L^{\rm loc}(m_1)\|^2_{L^2(S_i(B))}\\
&\left.\quad +\int_{S_i(B)}\int_0^1\int_{B(x,t)}\left(\int_{\mathbb{R}^n}t^2|k_t(y,z)||m_2(z)|dz\right)^2\frac{dydt}{t^{n+1}}dx\right)\\
&=I_1+I_2.
\end{align*}

Since $S_L^{\textrm{loc}}$ is bounded from $L^2(\mathbb{R}^n)$ into itself it follows that
\begin{align}\label{ref1}
I_1&\leq C\|m_1\|_2^2=C\|m\chi _{2^{i+2B}\setminus 2^{i-3}B}\|_2^2\leq C\sum_{\ell =-2}^2\|m\|_{L^2(S_{i+\ell}(B))}^2\nonumber\\
&\leq C2^{-2i\varepsilon }|2^iB|\|\chi _{2^iB}\|_{p(\cdot )}^{-2}.
\end{align}

On the other hand, we observe that $t\in (0, 2^{i-2}r_{B})$ when $t\in (0,1)$ because $r_B\geq 1$. Then, if $x\in S_i(B)$, $t\in (0,1)$ and $y\in B(x,t)$, then $y\in 2^{i+1}B\setminus 2^{i-2}B$, and thus $|y-z|\sim 2^ir_B$, when $z\in (2^{i+2}B\setminus 2^{i-3}B)^c$.

This fact, jointly \eqref{B4}, H\"older's inequality and \eqref{m} leads to 
\begin{align}\label{ref2}
I_2&\leq C\|m\|_2^2\int_{S_i(B)}\int_0^1\int_{B(x,t)}\int_{(2^{i+2}B\setminus 2^{i-3}B)^c}\frac{e^{-c|y-z|^2/t^2}}{t^{2n}}dz\frac{dydt}{t^{n+1}}dx\nonumber\\
&\leq C\|m\|_2^2\int_{S_i(B)}\int_0^1\frac{e^{-c(2^ir_{B})^2/t^2}}{t^{2n+1}}\int_{B(x,t)}\int_{\mathbb{R}^n}\frac{e^{-c|y-z|^2/t^2}}{t^n}dzdydtdx\nonumber\\
&\leq C\|m\|_2^2\int_{S_i(B)}\int_0^1\frac{e^{-c(2^ir_{B})^2/t^2}}{t^{n+1}}dtdx\leq C\|m\|_2^2|2^iB|\int_0^1\frac{1}{t^{n+1}}\Big(\frac{t}{2^ir_{B}}\Big)^Ndt\nonumber\\
&\leq C|B|\|\chi _{B}\|_{p(\cdot )}^{-2}|2^iB|(2^ir_{B})^{-N},
\end{align}
where $N>n$.

Since $r_B\geq 1$, according to \cite[Lemma 3.8]{YaZhZh} and by choosing $N$ large enough we get
\[I_2\leq C|2^iB|\|\chi _{2^iB}\|_{p(\cdot )}^{-2}2^{-i(N-2n/p^-)}r_B^{n-N}\leq C2^{-2i\varepsilon}|2^iB|\|\chi _{2^iB}\|_{p(\cdot )}^{-2}.
\]

By combining the above estimates we obtain
\begin{equation}\label{acot1}
\|S_L^{\textrm{loc}}(m)\|_{L^2(S_i(B))}\leq C2^{-i\varepsilon }|2^iB|^{1/2}\|\chi _{2^iB}\|_{p(\cdot )}^{-1},\quad i\in \mathbb{N}, i\geq 3.
\end{equation}

We now deal with $S_I(e^{-L}m)$. We write  
\begin{align}\label{J1J2} 
\|S_I(e^{-L}m)\|_{L^2(S_i(B))}^2&=\int_{S_i(B)} \left(\int_0^{2^{i-2}r_B}+\int_{2^{i-2}r_B}^\infty \right)\int_{B(x,t)}|t^2e^{-t^2}e^{-L}(m)(y)|^2\frac{dydt}{t^{n+1}} \nonumber\\
& =J_1+J_2.
\end{align}

As it was noted before, if $x\in S_i(B)$, $t\in (0,2^{i-2}r_B)$, $y\in B(x,t)$ and $z\in (2^{i+2}B\setminus 2^{i-3}B)^c$, it follows that $|y-z|\sim 2^ir_B$. By taking into account that $e^{-L}$ is bounded on $L^2(\mathbb{R}^n)$ and proceeding as in the estimates of $I_1$ and $I_2$ we obtain
\begin{align*}
J_1&\leq C\int_{S_i(B)}\int_0^{2^{i-2}r_B}(t^2e^{-t^2})^2\int_{B(x,t)}(|e^{-L}(m_1)(y)|^2+|e^{-L}(m_2)(y)|^2)\frac{dydt}{t^{n+1}}dx\\
&\leq C\left(\int_0^{2^{i-2}r_B}(t^2e^{-t^2})^2\int_{\mathbb{R}^n}|e^{-L}(m_1)(y)|^2\int_{B(y,t)}dx\frac{dydt}{t^{n+1}}\right.\\
&\left. \quad +\int_{S_i(B)}\int_0^{2^{i-2}r_B}(t^2e^{-t^2})^2\int_{B(x,t)}\Big(\int_{(2^{i+2}r_B\setminus 2^{i-3}B)^c}e^{-|y-z|^2}|m(z)|dz\Big)^2\frac{dydt}{t^{n+1}}dx\right)\\
&\leq C\left(\|m_1\|_2^2\int_0^\infty t^3e^{-2t^2}dt+e^{-c(2^ir_B)^2}\|m\|_2^2\int_{S_i(B)}\int_0^{2^{i-2}r_B}t^3\int_{B(x,t)}dy\frac{dt}{t^{n}}dx\right)\\
&\leq C\left(\|m_1\|_2^2+e^{-c(2^ir_B)^2}|B|\|\chi _B\|_{p(\cdot )}^{-2}|2^iB|(2^ir_B)^4\right)\\
&\leq C\left(\|m_1\|_2^2+|B|\|\chi _B\|_{p(\cdot )}^{-2}|2^iB|(2^ir_B)^{-N}\right),
\end{align*}
for $N>0$.

As above, by choosing $N$ large enough it follows that
$$
J_1\leq C2^{-2i\varepsilon}|2^iB|\|\chi _{2^iB}\|_{p(\cdot )}^{-2}.
$$

On the other hand, using again the $L^2$-boundedness of $e^{-L}$, \eqref{m} and \cite[Lemma 3.8]{YaZhZh} we get
\begin{align*}
J_2&\leq C\int_{2^{i-2}r_B}^\infty t^{3-n}e^{-2t^2}\int_{\mathbb{R}^n}|e^{-L}m(y)|^2\int_{B(y,t)}dxdydt\\
&\leq C\|m\|_2^2\int_{2^{i-2}r_B}^\infty t^{3}e^{-2t^2}dt\leq Ce^{-c(2^ir_B)^2}|B|\|\chi _B\|_{p(\cdot)}^{-2}\\
&\leq Ce^{-c(2^ir_B)^2}|2^iB|\|\chi _{2^iB}\|_{p(\cdot )}^{-2}2^{-2ni(1/2-1/p^-)}\\
&\leq C2^{-2i(N+n(1/2-1/p^-))}|2^iB|\|\chi _{2^iB}\|_{p(\cdot)}^{-2},
\end{align*}
for $N>0$, because $r_B\geq 1$. We take $N$ sufficiently large to get
$$
J_2\leq C2^{-2i\varepsilon}|2^iB|\|\chi _{2^iB}\|_{p(\cdot )}^{-2},
$$
and thus, we conclude that
\begin{equation}\label{acot2}
\|S_I(e^{-L}m)\|_{L^2(S_i(B))}\leq C2^{-i\varepsilon }|2^iB|^{1/2}\|\chi _{2^iB}\|_{p(\cdot )}^{-1},\quad i\in \mathbb{N}, i\geq 3.
\end{equation}

It is clear also, from \eqref{gen} that, for $i=0,1,2$, 
$$
\|S_L^{\textrm{loc}}(m)\|_{L^2(S_i(B))}+\|S_I(e^{-L}m)\|_{L^2(S_i(B))}\leq C2^{-i\varepsilon }|2^iB|^{1/2}\|\chi _{2^iB}\|_{p(\cdot )}^{-1},
$$
which jointly to \eqref{acot1} and \eqref{acot2} leads to the estimate \eqref{objective} for the molecules of (I)-type, because $\varepsilon >n(1/p^--1/p^+)$.

Assume next that $m$ is a molecule of (II)-type.  According to \cite[(3.13)]{YaZhZh} we can find $C>0$ and $\eta >n(1/p^--1/p^+)$ such that 
\begin{equation}\label{acot3}
\|S_L^{\rm loc}(m)\|_{L^2(S_i(B))}\leq C2^{-i\eta}|2^iB|^{1/2}\|\chi _{2^iB}\|_{p(\cdot )}^{-1},\quad i\in \mathbb{N}.
\end{equation}

Let us now show that this estimate is also satisfied by $S_I(e^{-L}m)$.

Consider $b\in L^2(\mathbb{R}^n)$ such that $m=L^Mb$ and, for every $i\in \mathbb{N}_0$ and $k\in \{0,1,...,M\}$, $\|L^kb\|_{L^2(S_i(B))}\leq 2^{-i\varepsilon}r_B^{2(M-k)}|2^iB|^{1/2}\|\chi _{2^iB}\|_{p(\cdot )}^{-1}$. By proceeding as in the proof of \eqref{A4} we can see that
\begin{equation}\label{B6}
\|b\|_2\leq Cr_B^{2M}|B|^{1/2}\|\chi _B\|_{p(\cdot )}^{-1}.
\end{equation}

We observe that $L^Me^{-L}$ is a bounded operator from $L^2(\mathbb{R}^n)$ into itself, 
and if $K$ denotes the kernel of the integral operator $L^Me^{-L}$, we have that
\[K(x,y)=(-1)^M\frac{\partial ^M}{\partial s^M}W_s^L(x,y)_{\big|s=1},\quad x,y\in \mathbb{R}^n,
\]
where $W_s^L$ denotes the heat kernel associated with $e^{-sL}$ and then
\begin{equation}\label{B7}
|K(x,y)|\leq Ce^{-c|x-y|^2},\quad x,y\in \mathbb{R}^n.
\end{equation}

Let $i\in \mathbb{N}$, $i\geq 3$. We decompose $b$ by $b=b_1+b_2$, where $b_1=b\chi _{2^{i+3}B\setminus 2^{i-3}B}$. By considering $J_1$ and $J_2$ as in \eqref{J1J2} and taking into account the $L^2$-boundedness of $L^Me^{-L}$, \eqref{B6} and \eqref{B7}, we can proceed as above to obtain
\begin{align*}
J_1&\leq C\left(\|b_1\|_2^2+e^{-c(2^ir_B)^2}\|b\|_2^2|2^iB|(2^ir_B)^4\right)\\
&\leq C\left(2^{-2i\varepsilon }|2^iB|\|\chi _{2^iB}\|_{p(\cdot )}^{-2}+e^{-c(2^ir_B)^2}r_B^{4M+n}2^{2in/p^-}|2^iB|\|\chi _{2^iB}\|_{p(\cdot )}^{-2}\right)\\
&\leq C|2^iB|\|\chi _{2^iB}\|_{p(\cdot )}^{-2}(2^{-2i\varepsilon }+2^{-2i(2M +n(1/2-1/p^-))})\\
&\leq C2^{-2i\alpha}|2^{-i}B|\|\chi _{2^iB}\|_{p(\cdot )}^{-2},
\end{align*}
for some $\alpha >n(1/p^--1/p^+)$ because $\varepsilon >n(1/p^--1/p^+)$ and $2M>n(2/p^--1/2-1/p^+)$.

On the other hand, \eqref{B6} leads to
\begin{align*}
J_2&\leq Ce^{-c(2^ir_B)^2}\|b\|_2^2\leq Ce^{-c(2^ir_B)^2}r_B^{4M}|B|\|\chi _B\|_{p(\cdot )}^{-2}\\
&\leq Ce^{-c(2^ir_B)^2}r_B^{4M}2^{-2in(1/2-1/p^-)}|2^iB|\|\chi _{2^iB}\|_{p(\cdot )}^{-2}\\
&\leq C2^{-2i\alpha}|2^{-i}B|\|\chi _{2^iB}\|_{p(\cdot )}^{2},
\end{align*}
for a certain $\alpha >n(1/p^--1/p^+)$ because $2M>n(2/p^--1/2-1/p^+)$.

By combining the above estimates we get, for every $i\in \mathbb{N}$, $i\geq 3$,
\[\|S_I(e^{-L}m)\|_{L^2(S_i(B))}\leq C2^{-i\alpha}|2^{-i}B|^{1/2}\|\chi _{2^iB}\|_{p(\cdot )}^{-1},
\]
for some $\alpha >n(1/p^--1/p^+)$.

Also, by \eqref{gen} we have, for $i=0,1,2$,
\[\|S_I(e^{-L}m)\|_{L^2(S_i(B))}\leq C2^{-i\alpha}|2^iB|^{1/2}\|\chi _{2^iB}\|_{p(\cdot )}^{-1}.
\]
These estimations jointly \eqref{acot3} leads to \eqref{objective} for molecules of (II)-type.

We conclude that $f\in h_L^{p(\cdot )}(\mathbb{R}^n)$. By using a density argument we obtain that $h_{L,{\rm loc}, M,\varepsilon}(\mathbb{R}^n)\subset h_L^{p(\cdot )}(\mathbb{R}^n)$ algebraic and topologically.

\subsection{Proof of Theorem \ref{molchar} (ii)} 

We consider the operator $\pi_N$, $N\in \mathbb{N}_0$, defined by
\[\pi _N(F)(x)=\int_0^1(t^2L)^{N+1}e^{-t^2L}(F(\cdot ,t))(x)\frac{dt}{t},\quad F\in t_{2,c}^2(\mathbb{R}^n),
\]
where $t_{2,c}^2(\mathbb{R}^n)$ denotes the subspace of $t_2^2(\mathbb{R}^n)$ that consists of all of those $F\in t_2^2(\mathbb{R}^n)$ with compact support in $\mathbb{R}^n\times (0,1)$. Note that $t_{2,c}^2(\mathbb{R}^n)$ is dense in $t_2^2(\mathbb{R}^n)$ and in $t_2^{p(\cdot )}(\mathbb{R}^n)$.

The following result is very useful in our proof. 

\begin{lem}\label{pin}
Let $M, N\in \mathbb{N}_0$ and $\varepsilon >0$.

$(i)$ $\pi _N$ can be extended from $t_{2,c}^2(\mathbb{R}^n)$ to $t_2^2(\mathbb{R}^n)$ as a bounded operator from $t_2^2(\mathbb {R}^n)$ to $L^2(\mathbb{R}^n)$.

$(ii)$ Assume that $N\geq M$. There exists $C_0>0$ such that $C_0\pi _N(A)$ is a $(p(\cdot ),2,M,\varepsilon )_{L,\rm{loc}}$-molecule associated with the ball $B$ provided that $A$ is a $(t_2^{p(\cdot )},2)$-atom associated with $B$. Also, $\pi _N$ can be extended to $t^{p(\cdot )}_2(\mathbb {R}^n)$ as a bounded operator from $t^{p(\cdot )}_2(\mathbb {R}^n)$ into $h_{L,\textrm{mol}, M,\varepsilon}^{p(\cdot )}(\mathbb {R}^n)$ and
\[\left\|\left(\sum_{j\in \mathbb{N}}\left(\frac{\lambda _j\chi _{B_j}}{\|\chi _{B_j}\|_{p(\cdot )}}\right)^{\underline{p}}\right)^{1/\underline{p}}\right\|_{p(\cdot )}\leq C\|F\|_{t_2^{p(\cdot )}(\mathbb{R}^n)},
\]
where, for every $j\in \mathbb{N}$, $\lambda _j>0$ and $B_j$ is a ball in $\mathbb{R}^n$ existing a $(t_2^{p(\cdot )},2)$-atom $A_j$ associated with $B_j$ such that $F=\sum_{j\in \mathbb{N}}\lambda _jA_j$ in $t_2^2(\mathbb{R}^n)$ and in $t_2^{p(\cdot )}(\mathbb{R}^n)$. 
\end{lem}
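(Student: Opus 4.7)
I would establish the $L^2$-boundedness by duality. For $F\in t_{2,c}^2(\mathbb{R}^n)$ and $f\in L^2(\mathbb{R}^n)$, self-adjointness of $L$ together with Fubini give
\[
\langle \pi_N(F),f\rangle=\int_0^1\int_{\mathbb{R}^n}F(x,t)\,\overline{(t^2L)^{N+1}e^{-t^2L}f(x)}\,\frac{dx\,dt}{t}.
\]
Cauchy--Schwarz in $L^2(\mathbb{R}^n\times(0,1),dx\,dt/t)$ followed by the substitution $s=t^2$ and the quadratic estimate \eqref{boundedL-Pfunction} for $g_L^{N+1}$ produces $|\langle \pi_N(F),f\rangle|\leq C\|F\|_{t_2^2(\mathbb{R}^n)}\|f\|_2$. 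Density of $t_{2,c}^2$ in $t_2^2$ then delivers the bounded extension to $t_2^2(\mathbb{R}^n)$.

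\textbf{Plan for (ii): from atoms to molecules.} Let $A$ be a $(t_2^{p(\cdot)},2)$-atom associated with $B=B(x_B,r_B)$. Splitting the Gaussian factor in \eqref{gaussianderivatives} as $e^{-c|x-y|^2/t^2}\leq e^{-cd(E,F)^2/(2t^2)}e^{-c|x-y|^2/(2t^2)}$ (for $x\in E$, $y\in F$) and applying Young's inequality to the residual Gaussian convolution yields the $L^2$-off-diagonal estimate
\[
\|(t^2L)^{j}e^{-t^2L}g\|_{L^2(E)}\leq Ce^{-cd(E,F)^2/t^2}\|g\|_{L^2(F)},\quad \supp g\subset F,\ j\geq 0.
\]
When $r_B\geq 1$, I regard $\pi_N(A)$ as a type-(I) molecule. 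For $i\in\{0,1,2\}$ the bound comes from part (i), the atom normalization, and the comparability \cite[Lemma~3.8]{YaZhZh} of $|B|^{1/2}\|\chi_B\|_{p(\cdot)}^{-1}$ with $|2^iB|^{1/2}\|\chi_{2^iB}\|_{p(\cdot)}^{-1}$ up to a polynomial factor in $2^i$. For $i\geq 3$, duality against $g\in L^2(S_i(B))$ with $\|g\|_2=1$, together with the off-diagonal estimate, self-adjointness of $L$, and Cauchy--Schwarz in $t$, produces a factor $e^{-c4^i}$ (because $t<1\leq r_B$) that absorbs any polynomial growth. When $r_B<1$, I set
\[
b(x):=\int_0^1 t^{2M}(t^2L)^{N+1-M}e^{-t^2L}(A(\cdot,t))(x)\,\frac{dt}{t},
\]
so that $\pi_N(A)=L^Mb$, and the algebraic identity $L^kt^{2M}(t^2L)^{N+1-M}=t^{2(M-k)}(t^2L)^{N+1-M+k}$ gives
\[
L^kb(x)=\int_0^{r_B}t^{2(M-k)}(t^2L)^{N+1-M+k}e^{-t^2L}(A(\cdot,t))(x)\,\frac{dt}{t},
\]
where the truncation to $(0,r_B)$ uses $\supp A\subset\widehat{B}$. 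The bound $t^{2(M-k)}\leq r_B^{2(M-k)}$ supplies the $r_B^{2(M-k)}$ factor required by the type-(II) condition; the $L^2(B)$ estimate then follows as in (i) via the quadratic estimate for $g_L^{N+1-M+k}$ (valid because $N\geq M$), while for $i\geq 3$ the $L^2(S_i(B))$ estimate again picks up a factor $e^{-c4^i}$ from the off-diagonal bound, since $t<r_B$ forces $(2^ir_B)^2/t^2\geq 4^i$. Altogether this produces a universal $C_0>0$ such that $C_0\pi_N(A)$ is a $(p(\cdot),2,M,\varepsilon)_{L,\mathrm{loc}}$-molecule associated with $B$, for any $\varepsilon>0$.

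\textbf{Extension to $t_2^{p(\cdot)}$.} Given $F\in t_2^{p(\cdot)}(\mathbb{R}^n)\cap t_2^2(\mathbb{R}^n)$, Lemma \ref{LemmaT2} furnishes a decomposition $F=\sum_j\lambda_jA_j$ convergent in both $t_2^2(\mathbb{R}^n)$ and $t_2^{p(\cdot)}(\mathbb{R}^n)$, with $(t_2^{p(\cdot)},2)$-atoms $A_j$ associated with balls $B_j$, and
\[
\left\|\left(\sum_{j\in\mathbb{N}}\left(\frac{\lambda_j\chi_{B_j}}{\|\chi_{B_j}\|_{p(\cdot)}}\right)^{\underline{p}}\right)^{1/\underline{p}}\right\|_{p(\cdot)}\leq C\|F\|_{t_2^{p(\cdot)}(\mathbb{R}^n)}.
\]
By (i), $\pi_N(F)=\sum_j\lambda_j\pi_N(A_j)$ in $L^2(\mathbb{R}^n)$; since each $C_0\pi_N(A_j)$ is a $(p(\cdot),2,M,\varepsilon)_{L,\mathrm{loc}}$-molecule associated with $B_j$, this exhibits $\pi_N(F)$ as an admissible molecular sum in $h_{L,\mathrm{mol},M,\varepsilon}^{p(\cdot)}(\mathbb{R}^n)$ with the claimed norm bound. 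A standard density argument then extends $\pi_N$ to all of $t_2^{p(\cdot)}(\mathbb{R}^n)$.

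\textbf{Main obstacle.} The delicate technical point lies in the type-(II) analysis, where one must simultaneously (a) preserve the power $r_B^{2(M-k)}$ imposed by the molecule definition, (b) keep enough factors of $t^2L$ inside the integrand after differentiating by $L^k$ so that the quadratic estimate for $g_L^{N+1-M+k}$ can be invoked (which is precisely the reason for the hypothesis $N\geq M$), and (c) extract exponential off-diagonal decay strong enough to dominate the polynomially growing ratio $\|\chi_{2^iB}\|_{p(\cdot)}^{-1}/\|\chi_B\|_{p(\cdot)}^{-1}$ on annular shells. The identity $L^kt^{2M}(t^2L)^{N+1-M}=t^{2(M-k)}(t^2L)^{N+1-M+k}$ is the algebraic pivot that reconciles all three requirements in a single computation.
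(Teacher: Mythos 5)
Your plan is correct and follows essentially the same route as the paper's proof: duality plus the $L^2$ quadratic estimate for part (i); for part (ii), the split into $r_B\geq 1$ (type-(I)) and $r_B<1$ (type-(II)), with the kernel/off-diagonal Gaussian bound driving the annular decay, the identity $L^k b=\int_0^{r_B}t^{2(M-k)}(t^2L)^{N+1-M+k}e^{-t^2L}(A(\cdot,t))\,\frac{dt}{t}$ producing the $r_B^{2(M-k)}$ factors, and Lemma~\ref{LemmaT2} together with part (i) to pass from atoms to a molecular sum. The only cosmetic differences from the paper are that you phrase the type-(I) shell estimate via an $L^2$ off-diagonal bound and duality against $g\in L^2(S_i(B))$ rather than integrating the kernel bound directly, and you invoke the global vertical square function $g_L^{N+1-M+k}$ in place of the localized $G_{L,N-M+k}^{\rm loc}$ used in the paper; your remark that $N\geq M$ is precisely what guarantees the square-function exponent stays $\geq 1$ is exactly the point the hypothesis is serving.
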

\begin{proof}

We first establish $(i)$. Note that
\begin{align*}
\|F\|_{t_2^2(\mathbb{R}^n)}^2&=\|T^{\textrm{loc}}(F)\|_2^2=\int_{\mathbb{R}^n}\int_0^1|F(y,t)|^2\int_{B(x,t)}dx\frac{dtdy}{t^{n+1}}\\
&=w_n\int_{\mathbb{R}^n}\int_0^1|F(y,t)|^2\frac{dtdy}{t},
\end{align*}
where $w_n=|B(0,1)|$.

Suppose that $F\in t_{2,c}^2(\mathbb{R}^n)$ and $g\in D(L)$. Since $L$ is a self-adjoint operator in $L^2(\mathbb{R}^n)$ we can write
\begin{align*}
\int_{\mathbb{R}^n}\pi _N(F)(x)g(x)dx&=\int_{\mathbb{R}^n}\int_0^1(t^2L)^{N+1}e^{-t^2L}(F(\cdot ,t))(x)\frac{dt}{t}g(x)dx\\
&=\int_0^1\int_{\mathbb{R}^n}
(t^2L)^{N+1}e^{-t^2L}(F(\cdot ,t))(x)g(x)dx\frac{dt}{t}\\
&=\int_0^1\int_{\mathbb{R}^n}F(x,t)
(t^2L)^{N+1}e^{-t^2L}(g)(x)dx\frac{dt}{t}.
\end{align*}
The interchange of the order of integration can be justified by taking into account that the family of operators $\{(t^2L)^{N+1}e^{-t^2L}\}_{t>0}$ is uniformly bounded in $L^2(\mathbb{R}^n)$ (that can be seen from the kernel estimates \eqref{gaussianderivatives})  and the properties of the function $F$ and $g$.

We have that
\begin{align*}
\left|\int_{\mathbb{R}^n}\pi _N(F)(x)g(x)dx\right|&\leq \left(\int_0^1\int_{\mathbb{R}^n}|F(x,t)|^2\frac{dxdt}{t}\right)^{1/2} \\
& \times \left(\int_0^1\int_{\mathbb{R}^n}|(t^2L)^{N+1}e^{-t^2L}(g)(x)|^2\frac{dxdt}{t}\right)^{1/2}\\
&\leq \left(\int_0^1\int_{\mathbb{R}^n}|F(x,t)|^2\frac{dxdt}{t}\right)^{1/2}\|G_{L,N}^{\textrm{loc}}(g)\|_2,
\end{align*}
where 
\[G_{L,N}^{\textrm{loc}}(g)(x)=\int_0^1|(t^2L)^{N+1}e^{-t^2L}(g)(x)|^2\frac{dt}{t},\quad x\in \mathbb{R}^n.
\]
Since $G_{L,N}^{\textrm{loc}}$ is a bounded (sublinear) operator from $L^2(\mathbb{R}^n)$ into itself (see \cite[Lemma 4.1]{LYY}), we get
\[\left|\int_{\mathbb{R}^n}\pi _N(F)(x)g(x)dx\right|\leq C\|F\|_{t_2^2(\mathbb{R}^n)}\|g\|_2.
\]
Duality arguments and the fact that $D(L)$ is dense in $L^2(\mathbb R^n)$ allow us to show that $\pi _N$ can be extended from $t_{2,c}^2(\mathbb{R}^n)$ to $t_2^2(\mathbb{R}^n)$ as a bounded operator from $t_2^2(\mathbb{R}^n)$ into $L^2(\mathbb{R}^n)$ and $(i)$ is established.

(ii) Let now $A$ be a $(t_2^{p(\cdot )},2)$-atom associated with the ball $B=B(x_B,r_B)$, with $x_B\in \mathbb{R}^n$ and $r_B>0$. We are going to see that, for certain $C_0>0$, $C_0\pi _N(A)$ is a $(p(\cdot ),2,M,\varepsilon )_{L,\rm{loc}}$-molecule. 

Suppose first that $r_B\geq 1$.
Since $A\in t_2^2(\mathbb{R}^n)$, we have that $A(x,t)\chi _{(\delta , 1)}(t)\longrightarrow A(x,t)$, as $\delta \rightarrow 0^+$, in $t_2^2(\mathbb{R}^n)$, and then by $(i)$ we obtain that
\[\pi _N(A)(x)=\lim_{\delta \rightarrow 0^+}\int_\delta ^1
(t^2L)^{N+1}e^{-t^2L}(A(\cdot ,t))(x)\frac{dt}{t},\quad \mbox{ in }L^2(\mathbb{R}^n).
\]

Let $i\in \mathbb{N}$, $i\geq 2$. We have that
\[\|\pi _N(A)\|_{L^2(S_i(B))}^2=\lim_{\delta \rightarrow 0^+}\int_{S_i(B)}\left|\int_\delta ^1
(t^2L)^{N+1}e^{-t^2L}(A(\cdot ,t)(x)\frac{dt}{t}\right|^2dx.
\]
The kernel $K_t^N$ of the integral operator $(t^2L)^{N+1}e^{-t^2L}$ is given by
\[K_t^N(x,y)=(-1)^{N+1}t^{2(N+1)}\frac{\partial ^{N+1}}{\partial s^{N+1}}W_s^L(x,y)_{\big|s=t^2},\quad x,y \in \mathbb{R}^n,
\]
for every $t>0$ and from \eqref{gaussianderivatives} we obtain
\begin{equation}\label{K1}
|K_t^N(x,y)|\leq C\frac{e^{-c|x-y|^2/t^2}}{t^n},\quad x,y\in \mathbb{R}^n,t>0.
\end{equation}
It follows that
\begin{align*}
\int_{S_i(B)}\left|\int_\delta ^1
(t^2L)^{N+1}e^{-t^2L}(A(\cdot ,t))(x)\frac{dt}{t}\right|^2dx\\
&\hspace{-3cm}\leq C\int_{S_i(B)}\left(\int_\delta ^1
\int_B\frac{e^{-c|x-y|^2/t^2}}{t^n}|A(y,t)|\frac{dydt}{t}\right)^2dx\\
&\hspace{-3cm}\leq C\int_{S_i(B)}\left(\int_\delta ^1
\frac{e^{-c(2^ir_B)^2/t^2}}{t^n}\int_B|A(y,t)|\frac{dydt}{t}\right)^2dx\\
&\hspace{-3cm}\leq C|2^iB|\|A\|_{t_2^2(\mathbb{R}^n)}^2|B|\int_\delta ^1\frac{e^{-c(2^ir_B)^2/t^2}}{t^{2n+1}}dt\\
&\hspace{-3cm}\leq C|2^iB||B|^2\|\chi _B\|_{p(\cdot )}^{-2}\int_0^1\frac{1}{t^{2n+1}}\Big(\frac{t}{2^ir_B}\Big)^\beta dt\\
&\hspace{-3cm}\leq Cr_B^{2n-\beta}2^{-2i(\beta /2- n/p^-)}|2^iB|\|\chi _{2^iB}\|_{p(\cdot )}^{-2},\quad \delta >0,
\end{align*}
where $\beta >2n$. By choosing $\beta$ large enough we conclude tht, for every $i\in \mathbb{N}$, $i\geq 2$,
\[\|\pi _N(A)\|_{L^2(S_i(B))}\leq C2^{-i\varepsilon}|2^iB|^{1/2}\|\chi _{2^iB}\|_{p(\cdot )},
\]
for certain $C>0$ independent of $i$, $\varepsilon$ and $A$.

When $i=0,1$, according to $(i)$ we can write
\begin{align*}
\|\pi_N(A)\|_{L^2(S_i(B))}\leq & \|\pi _N(A)\|_2\leq C\|A\|_{t_2^2(\mathbb{R}^n)}\leq C|B|^{1/2}\|\chi_ B\|_{p(\cdot )}^{-1} \\
& \leq C2^{-i\varepsilon}|2^iB|^{1/2}\|\chi _{2^iB}\|_{p(\cdot )}^{-1}.
\end{align*}
Thus, we have established that for certain $C_0>0$, $C_0\pi _N(A)$ is a $(p(\cdot ),2,M,\varepsilon)_{L,\rm{loc}}$-molecule of $(I)$-type.

Suppose now $r_B\in (0,1)$. We are going to see that $\pi _N(A)=L^M(b)$, where 
\begin{equation}\label{b}
b=\lim_{\delta \rightarrow 0^+}\int_\delta ^{r_B} (t^2L)^{N-M+1}e^{-t^2L}(A(\cdot ,t))t^{2M-1}dt,
\end{equation}
and
\begin{equation}\label{Lkb}
L^kb=\lim_{\delta \rightarrow 0^+}\int_\delta ^{r_B} (t^2L)^{N-M+k+1}e^{-t^2L}(A(\cdot ,t))t^{2(M-k)-1}dt,\quad k=0,...,M,
\end{equation}
where the limits are understood in $L^2(\mathbb{R}^n)$.

Let $k=0,...,M$. Since, for every $\delta >0$, $L^ke^{-\delta ^2L}$ is bounded from $L^2(\mathbb{R}^n)$ into itself, we can write
\begin{align*}
\int_\delta ^1
(t^2L)^{N+1}e^{-t^2L}(A(\cdot ,t))\frac{dt}{t}&=\int_\delta ^1L^ke^{-\delta ^2L}(t^2L)^{N-k+1}e^{-(t^2-\delta ^2)L}(A(\cdot ,t))t^{2k-1}dt\\
&=L^ke^{-\delta ^2L}\int_\delta ^1(t^2L)^{N-k+1}e^{-(t^2-\delta ^2)L}(A(\cdot ,t))t^{2k-1}dt\\
&=L^k\int_\delta ^1 e^{-\delta ^2L}(t^2L)^{N-k+1}e^{-(t^2-\delta ^2)L}(A(\cdot ,t))t^{2k-1}dt\\
&=L^k\int_\delta ^1 (t^2L)^{N-k+1}e^{-t^2L}(A(\cdot ,t))t^{2k-1}dt,\quad \delta\in (0,1).
\end{align*}

We also observe that, since $A(x,t)t^{2k}\chi _{(\delta,1)}(t)\longrightarrow A(x,t)t^{2k}$, as $\delta \rightarrow 0^+$, in $t_2^2(\mathbb{R}^n)$, by $(i)$ we get
\[\lim_{\delta \rightarrow 0^+}\int_\delta ^1 (t^2L)^{N-k+1}e^{-t^2L}(A(\cdot ,t))(x)t^{2k-1}dt=\pi _{N-k}(t^{2k}A(\cdot ,t))(x)
\]
in $L^2(\mathbb{R}^n)$. Since $L$ is a closed operator we deduce that
\begin{align*}
\pi_N(A)&=\lim_{\delta \rightarrow 0^+}\int_\delta ^1 (t^2L)^{N+1}e^{-t^2L}(A(\cdot ,t))\frac{dt}{t}\\
&=L^k\lim_{\delta \rightarrow 0^+}\int_\delta ^1 (t^2L)^{N-k+1}e^{-t^2L}(A(\cdot ,t))t^{2k-1}dt.
\end{align*}
In particular, when $k=M$, we obtain \eqref{b} and \eqref{Lkb}. 

Let us consider $i\in \mathbb{N}$, $i\geq 2$ and assume that $g\in L^2(\mathbb{R}^n)$ such that $\mbox{supp }g\subset S_i(B)$. We have that
\begin{align*}
\int_{\mathbb{R}^n} & L^k(b)(x)g(x)dx  \\
& = \lim_{\delta \rightarrow 0^+}\int_{\mathbb{R}^n}\int_\delta ^{r_B} (t^2L)^{N-M+k+1}e^{-t^2L}(A(\cdot ,t))(x)t^{2(M-k)-1}dtg(x)dx\\
&=\lim_{\delta \rightarrow 0^+}\int_\delta ^{r_B} \int_{\mathbb{R}^n}(t^2L)^{N-M+k+1}e^{-t^2L}(A(\cdot ,t))(x)g(x)dxt^{2(M-k)-1}dt\\
&=\lim_{\delta \rightarrow 0^+}\int_\delta ^{r_B} \int_{\mathbb{R}^n}(t^2L)^{N-M+k+1}e^{-(t^2-\delta ^2 )L}(A(\cdot ,t))(x)e^{-\delta ^2L}g(x)dxt^{2(M-k)-1}dt\\
&=\lim_{\delta \rightarrow 0^+}\int_\delta ^{r_B} \int_BA(x,t)(t^2L)^{N-M+k+1}e^{-t^2L}(g)(x)dxt^{2(M-k)-1}dt.
\end{align*}
By using H\"older's inequality,\eqref{K1} and \cite[Lemma 3.8]{YaZhZh} it follows that
\begin{align}\label{Gloc}
\left|\int_{\mathbb{R}^n}L^k(b)(x)g(x)dx\right|&\leq r_B^{2(M-k)}\int_0^{r_B}\int_B |A(x,t)||(t^2L)^{N-M+k+1}e^{-t^2L}(g)(x)|\frac{dxdt}{t}\nonumber\\
&\hspace{-2cm}\leq r_B^{2(M-k)}\|A\|_{t_2^2(\mathbb{R}^n)}\|G_{L,N-M+k}^{\rm loc}(g)\|_2\\
&\hspace{-2cm}\leq Cr_B^{2(M-k)}\|A\|_{t_2^2(\mathbb{R}^n)}\left(\int_0^{r_B}\int_B\left(\int_{S_i(B)}\frac{e^{-c|x-y|^2/t^2}}{t^n}|g(y)|dy\right)^2\frac{dxdt}{t}\right)^{1/2}\nonumber\\
&\hspace{-2cm}\leq Cr_B^{2(M-k)}\|A\|_{t_2^2(\mathbb{R}^n)}\left(\int_0^{r_B}\int_B\int_{S_i(B)}\frac{e^{-c|x-y|^2/t^2}}{t^n}|g(y)|^2dy\frac{dxdt}{t}\right)^{1/2}\nonumber\\
&\hspace{-2cm}\leq Cr_B^{2(M-k)}\|A\|_{t_2^2(\mathbb{R}^n)}\|g\|_2\left(\int_0^{r_B}\frac{e^{-c(2^ir_B)^2/t^2}}{t}dt\right)^{1/2}\nonumber\\
&\hspace{-2cm}\leq Cr_B^{2(M-k)}|B|^{1/2}\|\chi _B\|_{p(\cdot )}^{-1}\|g\|_2\left(\int_0^{r_B}\frac{1}{t}\Big(\frac{t}{2^ir_B}\Big)^\beta dt\right)^{1/2}\nonumber\\
&\hspace{-2cm}\leq C2^{-i(\beta /2 -n/2-n/p^-)}r_B^{2(M-k)}|2^iB|^{1/2}\|\chi _{2^iB}\|_{p(\cdot )}^{-1}2^{-i\eta}\|g\|_2, \nonumber
\end{align}
with $\beta >0$. By choosing $\beta $ sufficiently large we get
\[
\|L^k(b)\|_{L^2(S_i(B))}\leq C2^{-i\varepsilon}r_B^{2(M-k)}|2^iB|^{1/2}\|\chi _{2^iB}\|_{p(\cdot )}^{-1}.
\]

By considering that $G_{L,N-M+k}^{\rm{loc}}$ is a bounded (sublinear) operator from $L^2(\mathbb{R}^n)$ into itself, from \eqref{Gloc} we deduce that 
\begin{align*}
\left|\int_{\mathbb{R}^n}L^k(b)(x)g(x)dx\right|&\leq Cr_B^{2(M-k)}\|A\|_{t_2^2(\mathbb{R}^n)}\|g\|_2\leq Cr_B^{2(M-k)}|B|^{1/2}\|\chi _B\|_{p(\cdot )}^{-1}\|g\|_2,
\end{align*}
and we get
\[\|L^k(b)\|_2\leq C2^{-i\varepsilon}r_B^{2(M-k)}|2^iB|^{1/2}\|\chi _{2^iB}\|_{p(\cdot )}^{-1},\quad i=0,1.
\]

Thus we conclude that $\pi _N(A)$ is a $(p(\cdot ),2,M,\varepsilon )_{L,\rm{loc}}$-molecule of $(II)$-type.

To finish the proof of $(ii)$ we have to show that $\pi _N$ can be extended to $t_2^{p(\cdot )}(\mathbb{R}^n)$ as a bounded operator from $t_2^{p(\cdot )}(\mathbb{R}^n)$ into $h_{L,\rm{mol}, M,\varepsilon }^{p(\cdot )}(\mathbb{R}^n)$. 

Let $F\in t_2^{p(\cdot )}(\mathbb{R}^n)\cap t_2^2(\mathbb{R}^n)$. According to Lemma \ref{LemmaT2} we can write $F=\sum_{j\in \mathbb{N}}\lambda _jA_j$ in $t_2^2(\mathbb{R}^n)$ and in $t_2^{p(\cdot )}(\mathbb{R}^n)$, where, for every $j\in \mathbb{N}$, $\lambda _j>0$ and $A_j$ is a $(t_2^{p(\cdot )},2)$-atom associated with the ball $B_j$, and satisfying that
\[\left\|\left(\sum_{j\in \mathbb{N}}\left(\frac{\lambda _j\chi _{B_j}}{\|\chi _{B_j}\|_{p(\cdot )}}\right)^{\underline{p}}\right)^{1/\underline{p}}\right\|_{p(\cdot )}\leq C\|F\|_{t_2^{p(\cdot )}(\mathbb{R}^n)}.
\]
By using $(i)$, $\pi _N(F)=\sum_{j\in \mathbb{N}}\lambda _j\pi _N(A_j)$ in $L^2(\mathbb{R}^n)$. Since, for a certain $C_0>0$, for every $j\in \mathbb{N}$, $C_0\pi _N(A_j)$ is a $(p(\cdot ),2,M,\varepsilon )_{L,\rm{loc}}$-molecule associated to $B_j$ $\pi _N(F)\in h_{L,\textrm{mol}, M,\varepsilon}^{p(\cdot )}(\mathbb{R}^n)$ and 
\[\|\pi _N(F)\|_{h_{L,\textrm{mol}, M,\varepsilon}^{p(\cdot )}(\mathbb{R}^n)}\leq C\|F\|_{t_2^{p(\cdot )}(\mathbb{R}^n)}.
\]

A standard procedure allows us to extend $\pi_N$ to $t_2^{p(\cdot )}(\mathbb{R}^n)$ as a bounded operator from $t_2^{p(\cdot )}(\mathbb{R}^n)$ to $h_{L,\textrm{mol}, M,\varepsilon}^{p(\cdot )}(\mathbb{R}^n)$. 
\end{proof}

Let us consider $N\in \mathbb{N}$, $N\geq M$. Assume that $f\in L^2(\mathbb{R}^n)\cap h_L^{p(\cdot )}(\mathbb{R}^n)$. Then, $S_L^{\textrm{loc}}(f)\in L^{p(\cdot )}(\mathbb{R}^n)$ and $e^{-L}f\in H_I^{p(\cdot )}(\mathbb{R}^n)$.

According to \cite[Theorem 2.3]{BDY} (see also \cite[(3.12) and (3.13)]{CMY}, we can write
\begin{equation}\label{1}
f=c_{N+2}\int_0^1(t^2L)^{N+2}e^{-2t^2L}f\frac{dt}{t}+\sum_{\ell =0}^{N+1}c_\ell L^\ell e^{-2L}f,\quad f\in L^2(\mathbb{R}^n),
\end{equation}
for certain $c_{\ell}\in \mathbb{R},\ell =0,...,N+2$. Here this integral is understood as $\lim_{\delta \rightarrow 0^+}\int_\delta ^1$ in $L^2(\mathbb{R}^n)$. 

Since $e^{-L}f\in L^2(\mathbb{R}^n)\cap H_I^{p(\cdot )}(\mathbb{R}^n)$, by Theorems \ref{atomchar} and \ref{LpQ=HIp}, for every $j\in \mathbb{N}$, there exist $\lambda _j>0$ and a $L_\mathcal{Q}^{p(\cdot )}$-atom $a_j$ associated with the ball $B_j=B(x_{B_j},r_{B_j})$ with $x_{B_j}\in \mathbb{R}^n$ and $r_{B_j}\geq 1$, such that $e^{-L}f=\sum_{j\in \mathbb{N}}\lambda _ja_j$ in $L^2(\mathbb{R}^n)$ and in $H_I^{p(\cdot )}(\mathbb{R}^n)$, and
\[\left\|\left(\sum_{j\in \mathbb{N}}\left(\frac{\lambda _j\chi _{B_j}}{\|\chi _{B_j}\|_{p(\cdot )}}\right)^{\underline{p}}\right)^{1/\underline{p}}\right\|_{p(\cdot )}\leq C\|e^{-L}f\|_{H_I^{p(\cdot )}(\mathbb{R}^n)}.
\]

Let $\ell \in \{0,...,N+1\}$. We have that $L^\ell e^{-2L}f=\sum_{j\in \mathbb{N}}\lambda_jL^\ell e^{-L}a_j$, in $L^2(\mathbb{R}^n)$, because $L^\ell e^{-L}$ is a bounded operator in $L^2(\mathbb{R}^n)$. We are going to see that there exists $C_0>0$ such that $C_0L^\ell e^{-L}a_j$ is a $(p(\cdot ),2,M,\varepsilon )_{L,\rm{loc}}$-molecule of $(I)$-type associated with the ball $B_j$, for every $j\in \mathbb{N}$. Let $j\in \mathbb{N}$. The kernel $K_\ell$ of the integral operator $L^\ell e^{-L}$ is
\[K^\ell (x,y)=(-1)^\ell \frac{\partial ^\ell}{\partial t^\ell }W_t^L(x,y)_{\big|t=1},\quad x,y\in \mathbb{R}^n.
\]
Then, from \eqref{gaussianderivatives}
\[|K^\ell (x,y)|\leq Ce^{-c|x-y|^2},\quad x,y\in \mathbb{R}^n.
\]

Let $i\in \mathbb{N}$, $i\geq 2$. We have that
\begin{align*}
\|L^\ell e^{-L}(a_j)\|_{L^2(S_i(B_j))}^2&  \leq C\int_{S_i(B_j)}\left(\int_{B_j}e^{-c|x-y|^2}|a_j(y)|dy\right)^2dx\\
&\leq C\|a_j\|_2^2\int_{S_i(B_j)}\int_{B_j}e^{-c|x-y|^2}dydx\\
&\leq C\|a_j\|_2^2|2^iB_j|e^{-c(2^ir_{B_j})^2}\\
&\leq C|B_j|\|\chi _{B_j}\|_{p(\cdot )}^{-2}|2^iB_j|e^{-c(2^ir_{B_j})^2 }\\
&\leq 
C\frac{r_{B_j}^n}{(2^ir_{B_j})^\beta}2^{2in/p^-}|2^iB_j|\|\chi _{2^iB_j}\|_{p(\cdot )}^{-2}\\
&\leq C2^{-2i(\beta /2-n/p^-)}|2^iB_j|\|\chi _{2^iB_j}\|_{p(\cdot )}^{-2},
\end{align*}
for $\beta >n$. We choose $\beta$ sufficiently large to get
$$
\|L^\ell e^{-L}(a_j)\|_{L^2(S_i(B_j))}\leq C2^{-i\varepsilon}|2^iB_j|^{1/2}\|\chi _{2^iB_j}\|_{p(\cdot )}^{-1}.
$$
On the other hand, since $L^\ell e^{-L}$ is bounded from $L^2(\mathbb{R}^n)$ into itself, we obtain, for $i=0,1$,
\[\|L^\ell e^{-L}(a_j)\|_{L^2(S_i(B_j))}\leq C\|a_j\|_2\leq C|B_j|^{1/2}\|\chi _{B_j}\|_{p(\cdot )}^{-1}\leq C2^{-i\varepsilon}|2^iB_j|^{1/2}\|\chi _{2^iB_j}\|_{p(\cdot )}^{-1}.
\]
We have thus proved that, for certain $C_0$, $C_0L^\ell e^{-L}(a_j)$ is a $(p(\cdot ), 2,M,\varepsilon )_{L,\rm{loc}}$-molecule of $(I)$-type associated with $B_j$. Note that $C_0$ does not depend on $j$.

We have established that $L^\ell e^{-2L}f\in h_{L,\textrm{mol},M,\varepsilon}^{p(\cdot )}(\mathbb{R}^n)$ and
\begin{align}\label{2}
\|L^\ell e^{-2L}f\|_{h_{L,\textrm{mol},M,\varepsilon}^{p(\cdot )}(\mathbb{R}^n)}&\leq 
\left\|\left(\sum_{j\in \mathbb{N}}\left(\frac{\lambda _j\chi _{B_j}}{\|\chi _{B_j}\|_{p(\cdot )}}\right)^{\underline{p}}\right)^{1/\underline{p}}\right\|_{p(\cdot )}\nonumber\\
&\leq C\|e^{-L}f\|_{H_I^{p(\cdot )}(\mathbb{R}^n)}.
\end{align}

We now define $f_1=\int_0^1(t^2L)^{N+2}e^{-2t^2L}f\frac{dt}{t}$. Since $S_L^{\textrm{loc}}(f)\in L^{p(\cdot )}(\mathbb{R}^n)\cap L^2(\mathbb{R}^n)$, $t^2Le^{-t^2L}f\in t_2^{p(\cdot )}(\mathbb{R}^n)\cap t_2^2(\mathbb{R}^n)$. According to Lemma \ref{LemmaT2}, for every $j\in \mathbb{N}$, there exist $\lambda _j>0$ and a $(t_2^{p(\cdot )},2)$-atom $A_j$ associated with the ball $B_j$ such that $t^2Le^{-t^2L}f(x)=\sum_{j\in \mathbb{N}}\lambda _jA_j(x,t)$, in $t_2^2(\mathbb{R}^n)$ and in $t_2^{p(\cdot )}(\mathbb{R}^n)$, and
\[\left\|\left(\sum_{j\in \mathbb{N}}\left(\frac{\lambda _j\chi _{B_j}}{\|\chi _{B_j}\|_{p(\cdot )}}\right)^{\underline{p}}\right)^{1/\underline{p}}\right\|_{p(\cdot )}\leq C\|S_L^{\textrm{loc}}f\|_{p(\cdot )}.
\]

Then, by Lemma \ref{pin} we have that
\[f_1=\pi _N(t^2Le^{-t^2L}f)=\sum_{j\in \mathbb{N}}\lambda _j\pi_N(A_j)\quad \mbox{ in } L^2(\mathbb{R}^n),
\]
and, for certain $C_0>0$, for every $j\in \mathbb{N}$, $C_0\pi _N(A_j)$ is a $(p(\cdot ), 2,M,\varepsilon )_{L,\rm{loc}}$-molecule associated with $B_j$.

It follows that $f_1\in h_{L,\textrm{loc}, M,\varepsilon}^{p(\cdot)}(\mathbb{R}^n)$ and
\begin{equation}\label{f1}
\|f_1\|_{h_{L,\textrm{loc}, M,\varepsilon}^{p(\cdot)}(\mathbb{R}^n)}\leq C\|S_L^{\textrm{loc}}(f)\|_{p(\cdot )}.
\end{equation}

By taking into account \eqref{1}{, \eqref{2} and \eqref{f1} we obtain that $L^2(\mathbb{R}^n)\cap h_L^{p(\cdot)}(\mathbb{R}^n)\subset L^2(\mathbb{R}^n)\cap h_{L,\textrm{loc}, M,\epsilon}^{p(\cdot)}(\mathbb{R}^n)$ algebraic and topologically.

By using a density argument we conclude that $h_L^{p(\cdot )}(\mathbb{R}^n)\subset h_{L,\textrm{mol}, M,\varepsilon}^{p(\cdot )}(\mathbb{R}^n)$ algebraic and topologically.

\subsection{Proof of Corollary \ref{Lp=Hp}}
Suppose that $f\in L^2(\mathbb{R}^n)\cap h_L^{p(\cdot )}(\mathbb{R}^n)$, $\varepsilon >n(1/p^--1/p^+)$ and $M\in \mathbb{N}$. According to Theorem \ref{molchar}(ii) there exists $C>0$ such that, for every $j\in \mathbb{N}$, there exist $\lambda _j>0$ and a $(p(\cdot ),2,M,\varepsilon )_{L,\rm{loc}}$-molecule $m_j$ associated with the ball $B_j$ such that $f=\sum_{j\in \mathbb{N}}\lambda _jm_j$ in $L^2(\mathbb{R}^n)$, and satisfying that
\[\left\|\left(\sum_{j\in \mathbb{N}}\left(\frac{\lambda _j\chi _{B_j}}{\|\chi _{B_j}\|_{p(\cdot )}}\right)^{\underline{p}}\right)^{1/\underline{p}}\right\|_{p(\cdot )}\leq C\|f\|_{h_L^{p(\cdot )}(\mathbb{R}^n)}.
\]
For every $j\in \mathbb{N}$ and $i\in \mathbb{N}_0$,we have that
\[\|m_j\|_{L^2(S_i(B_j))}\leq 2^{-i\varepsilon}|2^iB_j|^{1/2}\|\chi _{2^iB_j}\|_{p(\cdot )}^{-1}.
\]
According to \cite[Proposition 2.11]{ZhSaYa} we get, for every $j_1,j_2\in \mathbb{N}$, $j_1<j_2$,
\begin{align*}
\left\|\sum_{j=j_1}^{j_2}\lambda _jm_j\chi _{S_i(B_j)}\right\|_{p(\cdot )}&\leq \left\|\left(\sum_{j=j_1}^{j_2}|\lambda _jm_j\chi _{S_i(B_j)}|^{\underline{p}}\right)^{1/\underline{p}}\right\|_{p(\cdot )}\\
&\leq C2^{-i(\varepsilon - n(1/w-1/p^+))}\left\|\left(\sum_{j=j_1}^{j_2}\left(\frac{\lambda _j\chi _{B_j}}{\|\chi _{B_j}\|_{p(\cdot )}}\right)^{\underline{p}}\right)^{1/\underline{p}}\right\|_{p(\cdot )},
\end{align*}
for $w\in (0,p^-)$. Then, since the series 
\[\sum_{j\in \mathbb{N}}\left(\frac{\lambda _j\chi _{B_j}}{\|\chi _{B_j}\|_{p(\cdot )}}\right)^{\underline{p}}
\]
converges in $L^{p(\cdot )/\underline{p}}(\mathbb{R}^n)$, if $\delta >0$ there exists $j_0\in \mathbb{N}$ such that, for every $j_1,j_2\in \mathbb{N}$, with $j_2>j_1\geq j_0$,
\begin{align*}
\left\|\sum_{j=j_1}^{j_2} \lambda _jm_j\right\|_{p(\cdot)}&\leq \left\|\sum_{i\in \mathbb{N}}\sum_{j=j_1}^{j_2}\lambda _jm_j\chi _{S_i(B_j)}\right\|_{p(\cdot )}\\
&\leq \left(\sum_{i\in \mathbb{N}}\Big\|\sum_{j=j_1}^{j_2}\lambda _jm_j\chi _{S_i(B_j)}\Big\|_{p(\cdot )}^{\underline{p}}\right)^{1/\underline{p}}\\
&\leq C\delta\left(\sum_{i\in \mathbb{N}}2^{-i\underline{p} (\varepsilon -n(1/w-1/p^+))}\right)^{1/\underline{p}}\leq C\delta.
\end{align*}
Since $L^{p(\cdot )}(\mathbb{R}^n)$ is complete, the series $\sum_{j\in \mathbb{N}}\lambda _jm_j$ converges in $L^{p(\cdot )}(\mathbb{R}^n)$.

We write $g=\sum_{j\in \mathbb{N}}\lambda _jm_j$ in $L^{p(\cdot )}(\mathbb{R}^n)$. Since $f=\sum_{j\in \mathbb{N}}\lambda _jm_j$ in $L^2(\mathbb{R}^n)$, by using \cite[Lemma 3.2.10]{DHHR} we conclude that $f=g$. Hence $f\in L^{p(\cdot )}(\mathbb{R}^n)$ and $\|f\|_{p(\cdot )}\leq C\|f\|_{h_L^{p(\cdot )}(\mathbb{R}^n)}$.

Suppose that $p^->1$. According to \cite[Lemma 3.4]{LiZhZh} (see also \cite[Lemma 5.1]{GoYa}) the area square integral $S_L$ defines a bounded operator from $L^2(\mathbb{R}^n,w)$ into itself, for every $w\in A_2(\mathbb{R}^n)$. Here $A_2(\mathbb{R}^n)$ denotes the Muckenhoupt class of weights and $L^2(\mathbb{R}^n,w)$ represents the weighted $L^2$ space. By using the generalization of Rubio de Francia's extrapolation theorem established in \cite[Corollary 1.10]{CuMP}, we deduce that there exists $C>0$ such that
\[\|S_Lf\|_{p(\cdot )}\leq C\|f\|_{p(\cdot )},\quad f\in L^2(\mathbb{R}^n)\cap L^{p(\cdot )}(\mathbb{R}^n).
\]
Hence, $L^2(\mathbb{R}^n)\cap L^{p(\cdot )}(\mathbb{R}^n)$ is continuously contained in $H_L^{p(\cdot )}(\mathbb{R}^n)$.

Density arguments allow us to show that $L^{p(\cdot )}(\mathbb{R}^n)=h_L^{p(\cdot )}(\mathbb{R}^n)=H_L^{p(\cdot )}(\mathbb{R}^n)$.

\section{Proof of Theorem \ref{local=global}}

We have to prove that $h^{p(\cdot)}_L(\mathbb R^n)=H^{p(\cdot)}_L(\mathbb R^n)$. Suppose that $f\in h^{p(\cdot)}_L(\mathbb R^n)\cap L^2(\mathbb R^n)$ and consider $M\in \mathbb{N}$, such that $2M>n(2/p^--1/2-1/p^+)$ and $\varepsilon >n(1/p^--1/p^+)$. According to Theorem \ref{molchar} there exist, for every $j\in{\mathbb N}$, $\lambda_j>0$ and a $(p(\cdot),2,M,\varepsilon)_{L,\rm{loc}}$-molecule $m_j$ associated with the ball $B_j$ such that $f=\sum_{j\in \mathbb{N}}\lambda_jm_j$ in $L^2(\mathbb R^n)$ and in $h^{p(\cdot)}_L(\mathbb R^n)$, and

\[\left\|\left(\sum_{j\in \mathbb{N}} \left(\lambda_j\frac{\chi_{B_j}}{\|\chi_{B_j}\|_{p(\cdot)}}\right)^{\underline{p}}\right)^{1/\underline{p}}\right\|_{p(\cdot)}\sim \;\;\|f\|_{h^{p(\cdot)}_L(\mathbb R^n)}.\]
We are going to see that $f\in H^{p(\cdot)}_L(\mathbb R^n)$ and that

\[\|f\|_{H^{p(\cdot)}_L(\mathbb R^n)}\leq C\left\|\left(\sum_{j\in \mathbb{N}} \left(\lambda_j\frac{\chi_{B_j}}{\|\chi_{B_j}\|_{p(\cdot)}}\right)^{\underline{p}}\right)^{1/\underline{p}}\right\|_{p(\cdot)}.\]
We have that
\[S_L(f)\leq S_L^{\rm{loc}}(f)+S_L^{\infty}(f),
\]
where
\[S_L^{\infty}(f)(x)=\left(\int_1^\infty\int_{B(x,t)}|t^2Le^{-t^2L}(f)(y)|^2\frac{dydt}{t^{n+1}}\right)^{1/2}, \quad x\in \mathbb R^n.
\]
Since $f\in L^2(\mathbb{R}^n)\cap h^{p(\cdot)}_L(\mathbb R^n)$, $S_L^{\rm{loc}}(f)\in L^{p(\cdot)}(\mathbb {R}^n)$ and 
\[\|S_L^{\rm{loc}}(f)\|_{p(\cdot)}\leq \|f\|_{h_L^{p(\cdot )}(\mathbb{R}^n)}\leq C\left\|\left(\sum_{j\in \mathbb{N}} \left(\lambda_j\frac{\chi_{B_j}}{\|\chi_{B_j}\|_{p(\cdot)}}\right)^{\underline{p}}\right)^{1/\underline{p}}\right\|_{p(\cdot)}.\]
By Lemma \ref{tecnico}, in order to prove that $S_L^{\infty}(f)\in L^{p(\cdot)}(\mathbb R^n)$ and that
\[
\|S_L^\infty(f)\|_{p(\cdot)}\leq C\left\|\left(\sum_{j\in \mathbb{N}} \left(\lambda_j\frac{\chi_{B_j}}{\|\chi_{B_j}\|_{p(\cdot)}}\right)^{\underline{p}}\right)^{1/\underline{p}}\right\|_{p(\cdot)},
\]
it is sufficient to see that there exist $C>0$ and $\eta >n(1/p^--1/p^+)$ such that 
\begin{equation}\label{C1}
\|S_L^\infty(m_j)\|_{L^2(S_i(B_j))}\leq C2^{-i\eta}|2^iB_j|\|\chi_{2^iB_j}\|_{p(\cdot)}^{-1}, \quad i \in \mathbb{N}_0,\;j\in\mathbb N.
\end{equation}
Assume that $m$ is a $(p(\cdot),2,M,\varepsilon)_{L,{\rm loc}}$-molecule associated with the ball $B=B(x_B,r_B)$, being $x_B\in \mathbb R^n$ and $r_B>0$.

Suppose first that $m$ is of $(I)$-type. Then $r_B\geq 1$. We observe that, since $\sigma (L)>0$, for every $t>0$, the kernel $k_t$ of the integral operator $Le^{-t^2L}$ satisfies that
\[
|k_t(x,y)|\leq C\frac{e^{-c(t^2+|x-y|^2/t^2)}}{t^{n+2}},\;\;\;x,y\in\mathbb R^ ,
\]
(see \cite[Proposition 2.2]{CS}). Consider $i\in\mathbb N$, $i\geq 3$. As in the proof of Theorem \ref{molchar} $(i)$ we decompose $m=m_1+m_2$, where $m_1=m\chi _{2^{i+2}B\setminus 2^{i-3}B}$ and write \begin{align*}
 \|S_L^{\infty}(m)\|_{L^2(S_i(B))}^2 & \leq C\Big(\|S_L(m_1)\|_{L^2(S_i(B))}^2\\
 &\quad + \int_{S_i(B)}\int_1^\infty\int_{B(x,t)}\Big(\int_{\mathbb{R}^n}t^2|k_t(y,z)||m_2(z)|dz\Big)^2dydtdx\Big)\\
&=\mathbb{I}_1+\mathbb{I}_2.
\end{align*}

As in \eqref{ref1}, the $L^2$-boundedness of $S_L$ gives
$$
\mathbb{I}_1\leq C2^{-2i\varepsilon}|2^iB|\|\chi _{2^iB}\|_{p(\cdot )}^{-2}
$$

On the other hand
\begin{align}\label{acotI2}
 \mathbb{I}_2 &\leq C\int_{S_i(B)}\int_1^\infty\int_{B(x,t)}\left(\int_{(2^{i+2}B\setminus 2^{i-3}B)^c}\frac{e^{-c(t^2+|x-y|^2/t^2)}}{t^{n}}|m(z)|dz\right)^2 dydtdx \nonumber\\
 &\leq C\|m\|_2^2\int_{S_i(B)}\left(\int_1^{2^{i-2}r_B}+\int_{2^{i-2}r_B}^\infty\right)\nonumber\\
 &\quad \times \int_{B(x,t)}\int_{(2^{i+2}B\setminus 2^{i-3}B)^c}\frac{e^{-c(t^2+|x-y|^2/t^2)}}{t^{2n}}dzdydtdx\nonumber\\
& =\mathbb{I}_{2,1}+\mathbb{I}_{2,2}.
\end{align}

By proceeding as in \eqref{ref2} and using \eqref{m} we have, for every $\alpha >0$, 
\begin{align*}
 \mathbb{I}_{2,1}\leq & C\|m\|^2_2\int_{S_i(B)}\int_0^{2^{i-2}r_B}e^{-c(t^2+(2^i r_B)^2/t^2)}dtdx\\
& \leq C\|m\|^2_2|2^iB|\int_0^{2^{i-2}r_B}\frac{1}{t^\alpha}\left(\frac{t}{2^i r_B}\right)^\alpha d t\\
& \leq C |B|\|\chi_{B}\|_{p(\cdot)}^{-2}|2^i B|(2^i r_B)^{1-\alpha}.
\end{align*}

Since $r_B\geq 1$, by considering \cite[Lemma 3.8]{YaZhZh} and choosing $\alpha$ large enough we obtain
$$
\mathbb{I}_{2,1}\leq C|2^iB|\|\chi _{2^iB}\|_{p(\cdot )}^{-2}2^{-i(\alpha -1-2n/p^-)}r_B^{n+1-\alpha }\leq C2^{-2i\varepsilon}|2^iB|\|\chi _{2^iB}\|_{p(\cdot )}^{-2}.
$$

Also, in analogous way we get
\begin{align*}
 \mathbb{I}_{2,2}& \leq C \|m\|_2^2\int_{S_i(B)}\int_{2^{i-2}r_B}^\infty e^{-ct^2} d tdx\leq C|B|\|\chi_{B}\|_{p(\cdot)}^{-2}|2^i B|e^{-c(2^i r_B)^2}\\
 &\leq C 2^{-2i\varepsilon} |2^i B|\|\chi_{2^i B}\|_{p(\cdot)}^{-2}.
\end{align*}

By putting together the above estimates we conclude that, for $i\geq 3$,
\[\|S_L^\infty(m)\|_{L^2(S_i(B))}\leq C2^{-i\varepsilon}|2^iB|^{1/2}\|\chi_{2^iB}\|_{p(\cdot)}^{-1}.\]

By taking into account the $L^2$-boundedness of $S_L$ and \eqref{m} this estimation is also true for $i=0,1,2$, and then \eqref{C1} is established for molecules of (I)-type. 

We now consider that $m$ is a molecule of (II)-type. Then $r_B\in (0,1)$ and there exists $b\in L^2(\mathbb R^n)$ such that $m=L^Mb$ and, for every $k\in\{0,...,M\}$,
\begin{equation}\label{acotb}
\|L^kb\|_{L^2(S_i(B))}\leq 2^{-i\varepsilon}r_B^{2(M-k)}|2^iB|^{1/2}\|\chi_{2^iB}\|_{p(\cdot)}^{-1},\;\;\;i\in\mathbb N _0.
\end{equation}

We have that
$$
\|S_L^\infty (m)\|_{L^2(S_i(B))}^2=\int_{S_i(B)}\int_1^\infty\int_{B(x,t)}|(t^2L)^{M+1}e^{-t^2L}(b)(y)|^2\frac{d y d t}{t^{4M+n+1}}d x.
$$

Let $i\in\mathbb N$, $i\geq 3$. We write $b=b_1+b_2$, where $b_1=b\chi _{2^{i+2}B\setminus 2^{i-3}B}$. 

We note that, for every $t>0$, the kernel $K_t$ of the operator $L^{M+1}e^{-t^2L}$ satisfies
\[|K_t(x,y)|\leq C\frac{e^{-c(t^2+|x-y|^2/t^2)}}{t^{2M+n+2}},\;\;\;x,y\in\mathbb R^n.\]

Then, we get
\begin{align*}
\|S_L^{\infty}(m)\|_{L^2(S_i(B))}^2& \leq C\left( \int_{S_i(B)}\int_1^\infty\int_{B(x,t)}|(t^2L)^{M+1}e^{-t^2L}(b_1)(y)|^2\frac{d y d t}{t^{4M+n+1}}d x\right. \\
&\hspace{-2cm} \left. \quad + \int_{S_i(B)}\int_1^\infty\int_{B(x,t)}\Big(\int_{\mathbb{R}^n}t^{2M+2}|K_t(y,z)||b_2(z)|dz\Big)^2\frac{dydt}{t^{4M+n+1}}dx\right) \\
&\hspace{-2cm}=\mathbb{J}_1+\mathbb{J}_2.
\end{align*}

By using \eqref{acotb} we obtain
\begin{align*}
 \mathbb{J}_1&\leq  C\int_{\mathbb{R}^n}\int_1^\infty |(t^2L)^{M+1}e^{-t^2L}(b_1)(y)|^2\frac{dt}{t}dy \\
&\leq C\|G_{L,M}(b_1)\|_2^2\leq C\|b_1\|_2^2 \leq C 2^{-2i\varepsilon}|2^iB|\|\chi_{2^iB}\|_{p(\cdot)}^{-2}.
\end{align*}

Here the Littlewood-Paley function $G_{L,N}$, $N\in \mathbb{N}$, is defined by
\[G_{L,N}(g)(x)=\left(\int_0^\infty|(t^2L)^{N+1}e^{-t^2L}(g)(x)|^2\frac{d t}{t}\right)^{1/2},\]
and it is a bounded (sublinear) operator from $ L^2(\mathbb R^n)$ into itself (\cite[Lemma 4.1]{LYY}).

On the other hand,
\begin{align*}
\mathbb{J}_2 & \leq C\int_{S_i(B)}\int_1^\infty \int_{B(x,t)}\Big(\int_{(2^{i+2}B\setminus 2^{i-1}B)^c}\frac{e^{-c(t^2+|x-y|^2/t^2)}}{t^{n}}b(z)dz\Big)^2 \frac{dydt}{t^{4M+n+1}}dx \\
&\leq C\|b\|_2^2\int_{S_i(B)}\int_1^\infty \int_{B(x,t)}\int_{(2^{i+2}B\setminus 2^{i-1}B)^c}\frac{e^{-c(t^2+|x-y|^2/t^2)}}{t^{2n}}dz \frac{dydt}{t^{4M+n+1}}dx\\
& =\mathbb{J}_{2,1}+\mathbb{J}_{2,2},
\end{align*}
where 
$$
\mathbb{J}_{2,1}=C\|b\|_2^2\int_{S_i(B)}\int_1^{2^{i-2}r_B} \int_{B(x,t)}\int_{(2^{i+2}B\setminus 2^{i-1}B)^c}\frac{e^{-c(t^2+|x-y|^2/t^2)}}{t^{2n}}dz \frac{dydt}{t^{4M+n+1}}dx
$$
provided that $2^{i-2}r_B>1$. If $2^{i-2}r_B\leq 1$, we define $\mathbb{J}_{2,1}=0$.

By proceeding in the same way as \eqref{acotI2} and taking into account \eqref{B6} and \cite[Lemma 3.8]{YaZhZh} we get, for $\alpha >1$,
\begin{align*}
 \mathbb{J}_{2}& \leq C\|b\|^2_2|2^iB|((2^ir_B)^{1-\alpha}+e^{-c(2^ir_B)^2})\leq C\|b\|_2^2|2^iB|(2^ir_B)^{1-\alpha }\\
 &\leq C |2^iB|\|\chi_{2^iB}\|_{p(\cdot)}^{-2}2^{2in/p^-}r_B^{4M+n}(2^i r_B)^{1-\alpha}  \\
& \leq C |2^i B|\|\chi_{2^iB}\|_{p(\cdot)}^{-2}2^{-i(\alpha -1-2n/p^-)}r_B^{4M+n+1-\alpha}.
\end{align*}

By choosing $\alpha =4M+n+1$, it follows that
$$
\mathbb{J}_2\leq C2^{-i(4M+n-2n/p^-)}|2^i B|\|\chi_{2^iB}\|_{p(\cdot)}^{-2},
$$
and since $2M>n(2/p^--1/2-1/p^+)$, there exists $\eta >n(1/p^--1/p^+)$ such that
\[
\mathbb{J}_2\leq  C2^{-2i\eta}|2^i B|\|\chi_{2^iB}\|_{p(\cdot)}^{-2}.\]

By combining the above estimates it follows that 
\[\|S_L^\infty(m)\|_{L^2(S_i(B))}\leq C2^{-i\eta}|2^i B|^{1/2}\|\chi_{2^iB}\|_{p(\cdot)}^{-1}, \quad i\in \mathbb{N}, i\geq 3,
\]
with $\eta >n(1/p^--1/p^+)$. When $i=0,1,2$ as in the previous case we get also this estimation.

Thus we have proved that \eqref{C1} holds for local molecules of (II)-type. 

The proof can be completed by using a density argument.

\section{Proof of Theorem \ref{localglobalL+I}}

Firstly we prove that $H_{L+I}^{p(\cdot)}(\mathbb R^n)\subset h_L^{p(\cdot)}(\mathbb R^n)$. Let $f\in H_{L+I}^{p(\cdot)}(\mathbb R^n)\cap L^2(\mathbb R^n)$. 
Consider $M\in \mathbb{N}$ such that $M>n(2/p^--1/2-1/p^+)+1/2$ and $\varepsilon >n(1/p^--1/p^+)$. According to \cite[Theorem 3.15]{YaZhZh}, for every $j\in\mathbb N$, there exist $\lambda_j >0$ and a $(p(\cdot),2,M,\varepsilon)_{L+I}$-molecule $m_j$ associated to the ball $B_j=B(x_{B_j},r_{B_j})$ with $x_{B_j}\in\mathbb R^n$ and $r_{B_j}>0$, such that $f=\sum_{j\in \mathbb{N}}\lambda_jm_j$ in $L^2(\mathbb R^n)$ and in $H_{L+I}^{p(\cdot)}(\mathbb R^n)$, and 
\[\left\|\left(\sum_{j\in \mathbb{N}} \left(\lambda_j\frac{\chi_{B_j}}{\|\chi_{B_j}\|_{p(\cdot)}}\right)^{\underline{p}}\right)^{1/\underline{p}}\right\|_{p(\cdot)}\sim \;\|f\|_{H^{p(\cdot)}_{L+I}(\mathbb R^n)}.\]

Our objective is to see that $S_L^{\rm{loc}}(f)\in L^{p(\cdot)}(\mathbb R^n)$, $e^{-L}f\in H^{p(\cdot )}_I(\mathbb{R}^n)$ and that
\[\|S_L^{\rm{loc}}(f)\|_{p(\cdot)}+\|S_I(e^{-L}f)\|_{p(\cdot )}\leq C\left\|\left(\sum_{j\in \mathbb{N}} \left(\lambda_j\frac{\chi_{B_j}}{\|\chi_{B_j}\|_{p(\cdot)}}\right)^{\underline{p}}\right)^{1/\underline{p}}\right\|_{p(\cdot)}.
\]

By considering Lemma \ref{tecnico} it is sufficient to show that there exist $C>0$ and $\eta >n(1/p^--1/p^+)$ such that, for every $j\in\mathbb N$,
\begin{align}\label{D1}
\|S_L^{\rm{loc}}(m_j)\|_{L^2(S_i(B_j))}+\|S_I(e^{-L}f)(m_j)\|_{L^2(S_i(B_j))}&\nonumber\\
&\hspace{-2cm}\leq C2^{-i\eta}|2^i B_j|^{1/2}\|\chi_{2^iB_j}\|_{p(\cdot)}^{-1}, \;\;\;i\in\mathbb N _0.
\end{align}

Let $m$ be a $(p(\cdot ),2,M,\varepsilon)_{L+I}$-molecule associated to $B=B(x_B,r_B)$, with $x_B\in \mathbb{R}^n$ and $r_B>0$. Then, there exists $b\in L^2(\mathbb{R}^n)$ such that $m=(L+I)^Mb$ and, for every $k\in \{0,...,M\}$,
\begin{equation}\label{Lkb2}
\|(L+I)^k(b)\|_{L^2(S_i(B))}\leq 2^{-i\varepsilon}r_B^{2(M-k)}|2^iB|^{1/2}\|\chi _{2^iB}\|_{p(\cdot )}^{-1},\quad i\in \mathbb{N}_0.
\end{equation}

We note that if $r_B\geq 1$, then $m$ is also a $(p(\cdot),2,M,\varepsilon)_{L,{\rm loc}}$-molecule of (I)-type. Then, according to (\ref{objective}), (\ref{D1}) holds for $m$, provided that $r_B\geq 1$.

Assume now that $r_{B}\in (0,1)$. We can write
\begin{equation}\label{D2}
Le^{-t^2L}-(L+I)e^{-t^2(L+I)}=(1-e^{-t^2})Le^{-t^2L}-e^{-t^2}e^{-t^2L},\;\;\;t>0 ,
\end{equation}
so we have that
\[S_L^{\rm{loc}}(g)\leq S_{L+I}^{\rm{loc}}(g)+{\mathcal H}(g)+{\mathcal Q}(g),\]
where
\[{\mathcal H}(g)(x)=\left(\int_0^1\int_{B(x,t)}|t^2(1-e^{-t^2})Le^{-t^2L}(g)(y)|^2 \frac{d y d t}{t^{n+1}}\right)^{1/2},\;\;x\in\mathbb R^n,\]
and
\[{\mathcal Q}(g)(x)=\left(\int_0^1\int_{B(x,t)}|t^2e^{-t^2}e^{-t^2L}(g)(y)|^2\frac{d y d t}{t^{n+1}}\right)^{1/2} ,\;\;x\in\mathbb R^n,\]
for every $g\in L^2(\mathbb R^n)$.

According to \cite[(3.13)]{YaZhZh} we can find $C>0$ and $\eta >n(1/p^--1/p^+)$ such that 
\[\|S_{L+I}^{\rm loc}(m)\|_{L^2(S_i(B))}\leq \|S_{L+I}(m)\|_{L^2(S_i(B))}\leq  C2^{-i\eta} |2^i B|^{1/2}\|\chi_{2^iB}\|_{p(\cdot)}^{-1}\;\;\;i\in\mathbb N _0.\]

We analyze the operator ${\mathcal H}$. We can write
\begin{align*}
\|{\mathcal H}(m)\|_{L^2(S_i(B))}^2& \\
&\hspace{-2cm}\leq C\int_{S_i(B)}\left(\int_0^{2^{i/2}r_B}+\int_{2^{i/2}r_B}^1\right)\int_{B(x,t)} |t^4Le^{-t^2L}(m)(y)|^2 \frac{d y d t}{t^{n+1}}d x \\
&\hspace{-2cm}=I_1+I_2,
\end{align*}
provided that $2^{i/2}r_B < 1$, and, in other case $I_2=0$.

Let $i\in \mathbb{N}$, $i\geq 3$. We observe that if $x\in S_i(B)$, $t\in (0,2^{i/2}r_B)$, $y\in B(x,t)$ and $z\in (2^{i+2}B\setminus 2^{i-3}B)^c$, then $|y-z|\geq c2^{i}r_B$. Then, by decomposing $m=m_1+m_2$, with $m_1=m\chi _{2^{i+2}B\setminus 2^{i-3}B}$, we have that 
\begin{align*}
 I_1&\leq C\left( \int_{S_i(B)}\int_0^{2^{i/2}r_B}\int_{B(x,t)}|t^4Le^{-t^2L}(m_1)|^2 \frac{dydt}{t^{n+1}}dx\right.\\
&\quad \left. +\int_{S_i(B)}\int_0^{2^{i/2}r_B}\int_{B(x,t)} \Big(t^2\int_{(2^{i+2}B\setminus 2^{i-3}B)^c}\frac{e^{-c|z-y|^2/t^2}}{t^n}|m(z)|dz\Big)^2 \frac{dydt}{t^{n+1}}dx\right) \\
& \leq C\left(\|S_L(m_1)\|_2^2+\right.\\
&\quad \left. +\|m\|_2^2\int_{S_i(B)}\int_0^{2^{i/2}r_B}t^{3-n}e^{-c(2^i r_B)^2/t^2}\int_{B(x,t)}\int_{\mathbb{R}^n}\frac{e^{-c|z-y|^2/t^2}}{t^{2n}}dzdydtdx \right)\\
& \leq C\left(\|m_1\|_2^2+\|m\|_2^2|2^iB|\int_0^{2^{i/2}r_B}t^{3-n}e^{-c(2^i r_B)^2/t^2}dt\right) \\
& \leq C\left(2^{-2i\varepsilon}|2^iB|\|\chi_{2^iB}\|_{p(\cdot)}^{-2}\right.\\
&\quad \left. +|2^iB|\|\chi_{2^iB}\|_{p(\cdot)}^{-2}2^{-2in(1/2-1/p^-)}r_B^n\int_0^{2^{i/2}r_B}t^{3-n}\left(\frac{t}{2^ir_B}\right)^{\gamma}dt\right) \\
&  \leq C |2^iB|\|\chi_{2^iB}\|_{p(\cdot)}^{-2}\left(2^{-2i\varepsilon}+2^{-2in(1/2-1/p^-)}r_B^n\frac{(2^{i/2}r_B)^{\gamma-n+4}}{(2^ir_B)^{\gamma}}\right) \\
& \leq C |2^iB|\|\chi_{2^iB}\|_{p(\cdot)}^{-2}\left(2^{-2i\varepsilon}+2^{-2i(n(1/2-1/p^-)+\gamma /4-1+n/4)} r_B^4\right)\\
&\leq C2^{-2i\varepsilon}|2^iB|\|\chi_{2^iB}\|_{p(\cdot)}^{-2},
\end{align*}
where $\gamma >n-4$ and it is chosen large enough in order that $n(1/2-1/p^-)+\gamma /4-1+n/4>\varepsilon$. 

On the other hand, by taking into account the Gaussian estimates \eqref{gaussianderivatives} it follows that the family of operators $\{e^{-t^2L/2}(L+I)^Mt^{2M}e^{-t^2L/2}\}_{t\in (0,1)}$ is bounded in the space ${\mathcal L}(L^2(\mathbb R^n))$ that consists of those linear bounded operators from $L^2(\mathbb R^n)$ into itself and that is endowed with the usual norm. Then, we have that
\begin{align*}
I_2 & =\int_{S_i(B)}\int_{2^{i/2}r_B}^1\int_{B(x,t)} |t^4Le^{-t^2L}((L+I)^Mb)(y)|^2 \frac{d y d t}{t^{n+1}}dx \\
& \leq C\int_{S_i(B)}\int_{2^{i/2}r_B}^1\int_{B(x,t)} |t^2Le^{-t^2L/2}(L+I)^Mt^{2M}e^{-t^2L/2}(b)(y)|^2 \frac{d y d t}{t^{n+4M-3}}d x  \\
& \leq C\int_{2^{i/2}r_B}^1\int_{\mathbb R^n} |t^2Le^{-t^2L/2}(L+I)^Mt^{2M}e^{-t^2L/2}(b)(y)|^2 \frac{d y d t}{t^{4M-3}}  \\
&  \leq C \frac{\|b\|_2^2}{(2^{i/2}r_B)^{4M-2}}\leq C|2^iB|\|\chi_{2^iB}\|_{p(\cdot)}^{-2}r_B^{4M}\frac{2^{-2in(1/2-1/p^-)}}{(2^{i/2}r_B)^{4M-2}} \\
& \leq C |2^iB|\|\chi_{2^iB}\|_{p(\cdot)}^{-2}2^{-2i(n(1/2-1/p^-)+M-1/2)}.
\end{align*}
By considering the conditions on $M$ and $\varepsilon$ we conclude that 
$$
\|\mathcal{H}(m)\|_{L^2(S_i(B))}\leq C2^{-i\eta}|2^iB|^{1/2}\|\chi_{2^iB}\|_{p(\cdot)}^{-1},
$$
for some $\eta >n(1/p^--1/p^+)$. Also, the $L^2$-boundedness of $S_L$ and \eqref{m} give this estimation also for $i=0,1,2$, because, in these cases 
$$
\|{\mathcal H}(m)\|_{L^2(S_i(B))}\leq \|{\mathcal H}(m)\|_2\leq C\|S_L(m)\|_2\leq C\|m\|_2\leq C 2^{-i\eta}|2^iB|^{1/2}\|\chi_{2^iB}\|_{p(\cdot)}^{-1},
$$
with $\eta >0$.

In order to study the operator ${\mathcal Q}$ we proceed in a similar way. We observe that the operator ${\mathcal Q}$ is bounded from $L^2(\mathbb R^n)$ into itself. Indeed, for every $g\in L^2(\mathbb R^n)$, we have that
\begin{align*}
\|{\mathcal Q}(g)\|_2= & \int_{\mathbb R^n}\int_0^1\int_{B(x,t)} |t^2e^{-t^2}e^{-t^2L}(g)(y)|^2 \frac{d y d t}{t^{n+1}}dx \\
& = \int_{\mathbb R^n}\int_0^1|t^2e^{-t^2}e^{-t^2L}(g)(y)|^2 \frac{d y d t}{t}\\
& \leq \int_0^1t^3e^{-t^2}\|e^{-t^2L}(g)\|_2^2d t\leq C\|g\|_2^2. 
\end{align*}

This fact, the ${\mathcal L}(L^2(\mathbb R^n))$- boundedness of the family of operators $\{e^{-t^2L/2}(L+I)^Mt^{2M}e^{-t^2L/2}\}_{t\in (0,1)}$ and the upper Gaussian estimate for the heat kernel associated with $L$ allow us to show that
$$
\|{\mathcal Q}(m)\|_{L^2(S_i(B))}\leq C 2^{-i\eta}|2^iB|^{1/2}\|\chi_{2^iB}\|_{p(\cdot)}^{-1},\;\;\;i\in\mathbb N _0,
$$
for some $\eta >n(1/p^--1/p^+)$.

By putting together the above estimates we conclude that
\[\|S_L^{\rm loc}(m)\|_{L^2(S_i(B))}\leq C 2^{-i\eta}|2^iB|^{1/2}\|\chi_{2^iB}\|_{p(\cdot)}^{-1},\]
with $\eta >n(1/p^--1/p^+)$.

On the other hand, we observe that the operator $(L+I)^Me^{-L}$ is bounded from $L^2(\mathbb R^n)$ into itself. Moreover the kernel, $K$, of this integral operator satisfies that $|K(x,y)|\leq Ce^{-c|x-y|^2}$, $x,y\in\mathbb R^n$. By proceeding as in the part of  proof of Theorem \ref{molchar} after (\ref{B6}) we deduce that, for certain $\eta >n(1/p^--1/p^+)$,
$$
\|S_I(e^{-L}m)\|_{L^2(S_i(B))}\leq C 2^{-i\eta}|2^iB|^{1/2}\|\chi_{2^iB}\|_{p(\cdot)}^{-1}, \quad i\in \mathbb{N}_0,
$$
and so, \eqref{D1} is established. A density argument allow us to conclude that $H_{L+I}^{p(\cdot)}(\mathbb R^n)\subset h_L^{p(\cdot)}(\mathbb R^n)$ algebraic and topologically.

We now prove that $h_L^{p(\cdot)}(\mathbb{R}^n) \subset H_{L+I}^{p(\cdot )}(\mathbb{R}^n)$. Let us consider $M\in \mathbb{N}$ such that $2(M-1)>n(2/p^--1/2-1/p^+)$ and $\varepsilon >n(1/p^--1/p^+)$. Let $f\in h_L^{p(\cdot)}(\mathbb{R}^n)\cap L^2(\mathbb{R}^n)$. According to Theorem \ref{molchar}, for every $j\in \mathbb{N}$, there exist $\lambda _j>0$ and a $(p(\cdot ),2,M,\varepsilon )_{L,\rm{loc}}$-molecule $m_j$ associated to the ball $B_j$ such that $f=\sum_{j\in \mathbb{N}}\lambda _jm_j$, in $L^2(\mathbb{R}^n)$ and in $h_L^{p(\cdot )}(\mathbb{R}^n)$, and
\[\left\|\left(\sum_{j\in \mathbb{N}}\left(\lambda_j\frac{\chi_{B_j}}{\|\chi_{B_j}\|_{p(\cdot)}}\right)^{\underline{p}}\right)^{1/\underline{p}}\right\|_{p(\cdot)}\leq C\|f\|_{h_L^{p(\cdot )}(\mathbb{R}^n)}.
\]

Our objective is to see that $f\in H_{L+I}^{p(\cdot )}(\mathbb{R}^n)$ and that 
\[\|S_{L+I}(f)\|_{p(\cdot )}\leq C\left\|\left(\sum_{j\in \mathbb{N}}\left(\lambda_j\frac{\chi_{B_j}}{\|\chi_{B_j}\|_{p(\cdot)}}\right)^{\underline{p}}\right)^{1/\underline{p}}\right\|_{p(\cdot)}.
\]

According to Lemma \ref{tecnico} these facts will be proved when we show that, for every $j\in \mathbb{N}$,
\[\|S_{L+I}(m_j)\|_{L^2(S_i(B_j))}\leq C2^{-i\eta}|2^iB_j|^{1/2}\|2^iB_j\|_{p(\cdot )}^{-1},\quad i\in \mathbb{N}_0,
\]
for some $\eta >n(1/p^--1/p^+)$.

Assume that $m$ is a $(p(\cdot ),2,M,\varepsilon )_{L,\rm{loc}}$-molecule associated to the ball $B=B(x_B,r_B)$ with $x_B\in \mathbb{R}^n$ and $r_B>0$. We can write
\[S_{L+I}(m)\leq S_{L+I}^{\textrm{loc}}(m)+S_{L+I}^\infty (m),
\]
where 
\[S_{L+I}^\infty (m)(x)=\left(\int_1^\infty \int_{B(x,t)}|t^2(L+I)e^{-t^2(L+I)}m(y)|^2\frac{dydt}{t^{n+1}}\right)^{1/2},\quad x\in \mathbb{R}^n.
\]

Suppose that $r_B\geq 1$. Then, $m$ is also a $(p(\cdot ),2,M,\varepsilon )_{L+I,\rm{loc}}$-molecule. Hence, by \eqref{objective} with $L+I$ instead of $L$,
\[\|S_{L+I}^{\textrm{loc}}(m)\|_{L^2(S_i(B))}\leq C2^{-i\eta}|2^iB|^{1/2}\|\chi _{2^iB}\|_{p(\cdot )}^{-1},
\]
for some $\eta >n(1/p^--1/p^+)$.

On the other hand, by denoting $W_t^{L+I}$ and $W_t^L$ the heat kernels defined by $L+I$ and $L$, respectively, we have that $W_t^{L+I}=e^{-t}W_t^L$, $t>0$, and
\[|W_t^{L+I}(x,y)|\leq Ce^{-t}\frac{e^{-c|x-y|^2/t}}{t^{n/2}},\quad x,y\in \mathbb{R}^n\mbox{ and }t>0.
\]
Also, for every $t>0$, the kernel $\mathbb{W}_t$ of the integral operator $(L+I)e^{-t^2(L+I)}$ satisfies that
\[|\mathbb{W}_t(x,y)|\leq Ce^{-t^2}\frac{e^{-c|x-y|^2/t^2}}{t^{n+2}},\quad x,y\in \mathbb{R}^n.
\]

Since $S_{L+I}$ is a bounded (sublinear) operator from $L^2(\mathbb{R}^n)$ into itself, we can proceed as in the proof of  \eqref{C1} for molecules of $(I)$-type to obtain that
\[\|S_{L+I}^\infty (m)\|_{L^2(S_i(B))}\leq C2^{-i\eta}|2^iB|^{1/2}\|\chi _{2^iB}\|_{p(\cdot )}^{-1},\quad i\in \mathbb{N}_0,
\]
for some $\eta>n(1/p^--1/p^+)$.

Suppose now that $r_B\in (0,1)$. There exists $b\in L^2(\mathbb{R}^n)$ such that $m=L^Mb$ and, for every $k\in \{0,...,M \}$,
\[\|L^k(b)\|_{L^2(S_i(B))}\leq 2^{-i\varepsilon }r_B^{2(M-k)}|2^iB|^{1/2}\|\chi _{2^iB}\|_{p(\cdot )}^{-1},\quad i\in \mathbb{N}_0.
\]

We define $\mathfrak{b}=L^{M-1}b$. Since $r_B\in (0,1)$, $\mathfrak{b}$ s a $(p(\cdot ),2,M-1,\varepsilon)_{L,{\rm loc}}$-molecule associated with the ball $B$ of $(II)$-type. It is clear that $m=L\mathfrak{b}$.

We can write
\begin{align*}
(S_{L+I}^{\rm loc}(m)(x))^2&=\int_0^1\int_{B(x,t)}|t^2(L+I)e^{-t^2(L+I)}(m)(y)|^2\frac{dydt}{t^{n+1}}\\
&\leq C\left(\int_0^1\int_{B(x,t)}|e^{-t^2}t^2Le^{-t^2L}(m)(y)|^2\frac{dydt}{t^{n+1}}\right.\\
&\quad \left. +\int_0^1\int_{B(x,t)}|e^{-t^2}t^2Le^{-t^2L}(\mathfrak{b})(y)|^2\frac{dydt}{t^{n+1}}\right)\\
&\leq C((S_L^{\rm loc}(m)(x))^2+(S_L^{\rm loc}(\mathfrak{b})(x))^2),\quad x\in \mathbb{R}^n.
\end{align*}

According to \eqref{objective} we conclude that
\begin{align*}
\|S_{L+I}^{\rm loc}(m)\|_{L^2(S_i(B))}&\leq C(\|S_L^{\rm loc}(m)\|_{L^2(S_i(B))}+\|S_L^{\rm loc}(\mathfrak{b})\|_{L^2(S_i(B))})\\
&\leq C2^{-i\eta}|2^iB|^{1/2}\|\chi _{2^iB}\|_{p(\cdot )}^{-1},\quad i\in \mathbb{N}_0,
\end{align*}
for some $\eta >n(1/p^--1/p^+)$.

Also, for every $i\in \mathbb{N}_0$, we have that  
\begin{align*}
\|S_{L+I}^\infty (m)\|_{L^2(S_i(B))}^2&=\int_{S_i(B)}\int_1^\infty \int_{B(x,t)}|t^2(L+I)e^{-t^2(L+I)}(L^Mb)(y)|^2\frac{dydt}{t^{n+1}}dx\\
&=\int_{S_i(B)}\int_1^\infty \int_{B(x,t)}|e^{-t^2}(t^2L)^{M+1}e^{-t^2L}(b)(y)|^2\frac{dydt}{t^{n+4M+1}}dx \\
&+\int_{S_i(B)}\int_1^\infty \int_{B(x,t)}|e^{-t^2}(t^2L)^{M}e^{-t^2L}(b)(y)|^2\frac{dydt}{t^{n+4M-3}}dx.
\end{align*}
We can proceed as in the proof of \eqref{C1} for molecules of $(II)$-type to obtain
\[\|S_{L+I}^\infty (m)\|_{L^2(S_i(B))}\leq C2^{-i\eta}|2^iB|^{1/2}\|\chi _{2^iB}\|_{p(\cdot) }^{-1},\quad i\in \mathbb{N}_0.
\]
Hence 
\[\|S_{L+I}(m)\|_{L^2(S_i(B))}\leq C2^{-i\eta}|2^iB|^{1/2}\|\chi _{2^iB}\|_{p(\cdot )}^{-1},\quad i\in \mathbb{N}_0,
\]
and we deduce that $f\in H_{L+I}^{p(\cdot )}(\mathbb{R}^n)$ and 
\[\|f\|_{H_{L+I}^{p(\cdot )}(\mathbb{R}^n)}\leq C
\left\|\left(\sum_{j\in \mathbb{N}}\left(\lambda_j\frac{\chi_{B_j}}{\|\chi_{B_j}\|_{p(\cdot)}}\right)^{\underline{p}}\right)^{1/\underline{p}}\right\|_{p(\cdot)}.
\]
As usual by using density arguments we get that $h_L^{p(\cdot )}(\mathbb{R}^n)\subset H_{L+I}^{p(\cdot )}(\mathbb{R}^n)$ and the inclusion is continuous.


\end{document}